\newtheorem{theorem}{Theorem}[section]
\newtheorem{prop}[theorem]{Proposition}
\newtheorem{lemma}[theorem]{Lemma}
\newtheorem{cor}[theorem]{Corollary}
\newenvironment{question}{\qn\rm}{\endqn}
\newenvironment{example}{\exam\rm}{\endexam}
\newenvironment{proof}{\prepf\rm}{\endprepf}
\newenvironment{remark}{\prerk\rm}{\endprerk}
\newcommand{\Aut}{\mathop{\mathrm{Aut}}}
\newcommand{\rk}{\mathop{\mathrm{rk}}}
\newcommand{\Inn}{\mathop{\mathrm{Inn}}}
\newcommand{\Car}{\mathop{\mathrm{Car}}\limits}
\newcommand{\Dir}{\mathop{\mathrm{Dir}}\limits}
\newcommand{\GL}{\mathop{\mathrm{GL}}}
\renewcommand{\wp}{\mathbin{\bar\wr}}
\newcommand{\qed}{\hfill$\Box$}
\newcommand{\sdir}{\rtimes}
\newcommand{\nor}{\unlhd}
\newcommand{\Z}{\mathbb{Z}}
\newcommand{\N}{\mathbb{N}}
\def\nor{\trianglelefteq\,}
\begin{document}
\title{Integrals of groups II}
\author{Jo\~ao Ara\'ujo\footnote{Universidade Aberta, 
R. Escola Politecnica 147, 1269-001, Lisboa, Portugal and
CEMAT-Ci\^encias, Faculdade de Ci\^encias, Universidade de Lisboa, 1749-016, Portugal
\texttt{\href{mailto:joao.araujo@uab.pt}{joao.araujo@uab.pt}}}, 
Peter J. Cameron\footnote{School of Mathematics and Statistics, University
of St.~Andrews, UK and CEMAT-Ci\^encias, Faculdade de Ci\^encias, Universidade de Lisboa, 1749-016, Portugal, \texttt{\href{mailto:pjc20@st-andrews.ac.uk}{pjc20@st-andrews.ac.uk}}},
Carlo Casolo, \\
Francesco Matucci\footnote{Dipartimento di Matematica e Applicazioni, Universit\`a di Milano--Bicocca, Italy, \texttt{\href{mailto:francesco.matucci@unimib.it}{francesco.matucci@unimib.it}}
}, and Claudio Quadrelli\footnote{Dipartimento di Scienza e Alta Tecnologia, Universit\`a dell'Insubria, Italy, \texttt{\href{mailto:claudio.quadrelli@uninsubria.it}{claudio.quadrelli@uninsubria.it}}
}}
\date{April 2022}
\maketitle

\begin{quote}\small
\textit{In April 2018, Carlo Casolo sent the other authors detailed answers
to some of the questions in the first version of the paper~\cite{accm}, and we
immediately invited him to join us. He was very dedicated and curious about
integrals and inverse group theory problems. In fact, the current paper is in
large part Carlo's work, together with the fruits of a meeting in Florence in
February 2020. Carlo passed away not long after. He was very generous and kind
to all of us and is sorely missed. We dedicate this paper to his memory.}
\end{quote}

\begin{abstract}
An \emph{integral} of a group $G$ is a group $H$ whose derived group
(commutator subgroup) is isomorphic to $G$. This paper continues the investigation
on integrals of groups started in the work \cite{accm}.
We study:
\begin{itemize}\itemsep0pt
\item A sufficient condition for a bound on the order of an integral for a finite integrable group (Theorem~\ref{t:suf}) and a necessary condition for a group to be integrable (Theorem~\ref{t:nec}).
\item The existence of integrals that are $p$-groups for abelian $p$-groups, and of nilpotent integrals for all abelian groups (Theorem~\ref{t:nilpotent-integral}).
\item Integrals of (finite or infinite) abelian groups, including nilpotent integrals, groups with finite index in some integral, periodic groups, torsion-free groups and finitely generated groups (Section~\ref{sec:integrabelian}).
\item The variety of integrals of groups from a given variety, varieties of
integrable groups and classes of groups whose integrals (when they exist)
still belong to such a class (Sections~\ref{sec:varieties} and \ref{sec:self-integrating}).
\item Integrals of profinite groups and a characterization for integrability for
finitely generated profinite centreless groups (Section~\ref{ss:profabs}).
\item Integrals of Cartesian products, which are then used to construct
examples of integrable profinite groups without a profinite integral (Section~\ref{sec:products}).
\end{itemize}
We end the paper with a number of open problems.
\end{abstract}

\section{Introduction}
\label{s:intro}

In our recent paper~\cite{accm}, we defined an integral of a group $G$ to be
a group $H$ whose derived group is isomorphic to $G$, and called a group $G$
integrable if it has an integral.

We traced this idea back to a paper of Bernhard Neumann~\cite{neumann} in 1956,
but it is much older. In 1913, Burnside published a paper~\cite{burnside} in
which he considered the question (in our language) of whether a given $p$-group
has an integral which is a $p$-group. We call such a group $p$-integrable,
and devote a section to such groups below. Every abelian $p$-group is
$p$-integrable, but it follows from Burnside's results that there are 
$p$-groups which are integrable but not $p$-integrable (the smallest being
the quaternion group of order~$8$).

We treat a number of further topics. The longest part (represented by Sections~\ref{sec:p-integrals}
and~\ref{sec:integrabelian})
of the paper 
considers integrals of infinite abelian groups in some detail. We also
examine profinite groups in Section~\ref{sec:profinite}
where we show that if a profinite group $G$ is
integrable, and if either $G$ is finitely generated (as profinite group) or
$G$ has finite index in some integral, then $G$ has a profinite integral. 
We also give a characterization for integrability in the case of
finitely generated profinite centreless groups and then provide examples
of integrable profinite groups without a profinite integral. 

An important question left open in \cite{accm} is to find an explicit bound
in terms of $G$ for the order of some integral of the finite group $G$ (if it
has one). Such a bound would give us an algorithm for testing integrability
of a finite group. We were able to give bounds in some special cases, 
including abelian groups and centreless groups. In this paper, we push the
analysis further in Section~\ref{s:bounding}. We show that, to bound the order
of some integral of $G$, it suffices to bound the exponent of the centre of
some integral of $G$ in terms of $G$.

We observe that Bettina Eick has a characterization for groups that are Frattini
subgroups of other groups and look for a similar characterization, obtaining a necessary
condition for integrability in Section~\ref{sec:necessary}.

In Sections~\ref{sec:varieties} and ~\ref{sec:self-integrating}
we work on integrals of groups from a given variety showing that such class forms a variety and tackling
the question of whether it is finitely based. We also look at whether there are varieties of integrable groups
beyond that of abelian groups and also study classes of groups so that, whenever we have an integrable group $G$ in such a class
$\mathcal{C}$, then $G$ has an
integral in $\mathcal{C}$.

In Section~\ref{sec:questions} we discuss the solution, by Efthymios Sofos, to
Question 10.1 from our work~\cite{accm}, and give some more open questions.

In this paper, we use $\Car$ and $\Dir$ for the (unrestricted) Cartesian
product and the direct sum, respectively, of a family of groups.


\section{Bounding the order of an integral}
\label{s:bounding}

A problem left open in the first paper~\cite{accm} is to find a bound for the
integral of an integrable finite group $G$ in terms of $G$. If such a bound can
be found, then we have at least a computable test for the integrability of $G$
(though not a very efficient test): compute all groups of order divisible
by $|G|$ up to the bound, and decide for each group whether its derived group
is isomorphic to $G$.

We have now an argument that reduces this problem to the problem of finding
a bound for the exponent of $Z(H)$ (for some integral $H$ of $G$) in terms of
$G$. It is still open how to find such a bound, if it exists.

\begin{theorem}
\label{thm:integral-bound}
Suppose there is a function $F$ from finite groups to natural
numbers such that, if $G$ is an integrable finite group, then $F(G)$ is
a bound for the exponent of the centre of some integral $H$ of $G$. Then there
is a function $F^*$ from finite groups to natural numbers suth that, if $G$ is
an integrable finite group, then $G$ has an integral of order at most $F^*(G)$.
\label{t:suf}
\end{theorem}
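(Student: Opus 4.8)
The plan is to show that the hypothesis forces \emph{some} integral of $G$ to have simultaneously bounded exponent and boundedly many generators, and that these two facts together bound its order. Throughout, $H$ denotes an integral of $G$, so that $H'\cong G$; I identify $G$ with $H'$ and write $C=C_H(G)=C_H(H')$.

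The heart of the argument is a purely structural claim that uses nothing about the centre beyond its definition: there is a function $E(G)$, depending only on $G$, with $\exp(H/Z(H))\le E(G)$ for \emph{every} integral $H$ of $G$. To prove it I would factor the passage from $H$ to $Z(H)$ through $C$. First, conjugation on the normal subgroup $G=H'$ gives an embedding $H/C\hookrightarrow\Aut(G)$, so $\exp(H/C)$ divides $\exp(\Aut(G))$. Secondly---and this is the step I expect to be the main obstacle, since it is where the non-abelian and non-nilpotent behaviour of $G$ must be tamed---I would bound $\exp(C/Z(H))$ by $\exp(G)$ as follows. For $y\in C$ the inner automorphism $\alpha_y\colon h\mapsto y^{-1}hy$ is trivial on $H'$ (because $y$ centralises $G=H'$) and trivial on $H/H'$ (because conjugation is always trivial on the abelianisation). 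Let $B\le\Aut(H)$ be the group of all automorphisms that are trivial on $H'$ and on $H/H'$. For $\beta\in B$ set $\delta_\beta(h)=h^{-1}\beta(h)\in H'$; since $\beta$ fixes $H'$ pointwise one computes $\beta^{n}(h)=h\,\delta_\beta(h)^{n}$, whence $\beta^{\exp(H')}=\mathrm{id}$ and $\exp(B)\mid\exp(G)$. As $y\mapsto\alpha_y$ maps $C$ into $B$ with kernel $Z(H)$ (recall $Z(H)\le C$), we obtain $C/Z(H)\hookrightarrow B$ and $\exp(C/Z(H))\mid\exp(G)$. Combining the two steps, every $x\in H$ satisfies $x^{\exp(\Aut(G))}\in C$ and then $\bigl(x^{\exp(\Aut(G))}\bigr)^{\exp(G)}\in Z(H)$, so $E(G)=\exp(\Aut(G))\cdot\exp(G)$ works.

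Granting the claim, I would finish as follows. By hypothesis, choose an integral $H_0$ of $G$ with $\exp(Z(H_0))\le F(G)$; since $x^{E(G)}\in Z(H_0)$ for all $x$, this gives $\exp(H_0)\le E(G)\,F(G)=:e(G)$. Now reduce the number of generators: as $G=H_0'$ is generated by commutators and $|G|$ is finite, a greedy choice yields elements $a_1,b_1,\dots,a_m,b_m\in H_0$ with $m\le\lceil\log_2|G|\rceil$ such that the commutators $[a_i,b_i]$ already generate $G$. Put $H_1=\langle a_1,b_1,\dots,a_m,b_m\rangle\le H_0$. Then $H_1'\le H_0'=G$ while $H_1'\supseteq\langle [a_i,b_i]\rangle=G$, so $H_1'=G$ and $H_1$ is again an integral of $G$.

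Finally I would bound $|H_1|$. The abelian group $H_1^{\mathrm{ab}}=H_1/G$ is generated by the $2m$ images of the chosen elements and has exponent dividing $\exp(H_1)\le\exp(H_0)\le e(G)$, so $|H_1^{\mathrm{ab}}|\le e(G)^{2m}$. Therefore $|H_1|=|G|\cdot|H_1^{\mathrm{ab}}|\le |G|\,e(G)^{2\lceil\log_2|G|\rceil}$, and setting $F^*(G)=|G|\,\bigl(E(G)F(G)\bigr)^{2\lceil\log_2|G|\rceil}$ yields the desired bound. The only genuinely delicate point is the structural claim of the second paragraph; once $\exp(H/Z(H))$ is controlled by $G$ alone, the reduction to few generators and the counting are routine.
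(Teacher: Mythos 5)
Your proof is correct, but it takes a genuinely different route through the middle of the argument. Both proofs share the outer layers: the embedding $H/C_H(G)\hookrightarrow\Aut(G)$, and the passage to a subintegral generated by the entries of at most $O(\log|G|)$ commutators. Where the paper then bounds $|C_H(G)/Z(H)|\le|Z(G)|^{t}$ via the homomorphisms $x\mapsto[x,h_i]$ into $Z(G)$, and converts the exponent hypothesis into an order bound on $Z(H)$ by a rank reduction (quotienting by a subgroup $N\le Z(H)$ with $N\cap Z(G)=1$ until $\rk Z(H)\le\rk Z(G)$), you instead prove the uniform statement $\exp(H/Z(H))\mid\exp(\Aut(G))\cdot\exp(G)$ for \emph{every} integral $H$, using the stability-group observation that an automorphism trivial on $H'$ and on $H/H'$ satisfies $\beta^{n}(h)=h\,\delta_\beta(h)^{n}$ and hence has order dividing $\exp(H')$; combined with the hypothesis this bounds $\exp(H_0)$ itself, and the order of the $2m$-generated subintegral then falls out of its abelianization. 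Your version buys some robustness: you never need to replace the integral by a quotient, so there is no question of whether the reduced integral still satisfies the exponent hypothesis, and you do not need to invoke the finiteness of some integral from the earlier paper, since finiteness of $H_1$ emerges from bounded generation plus bounded exponent. The paper's version, in exchange, uses the hypothesis only on $Z(H)$ and expresses the final bound through $|Z(G)|$ and $\rk(Z(G))$ rather than $\exp(G)$ and $\exp(\Aut(G))$, which is a different (and in some regimes sharper) shape of estimate.
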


\begin{proof}
Let $G$ be a finite group, $H$ an integral of $G$. We can assume
$H$ to be finite by \cite[Theorem~2.2]{accm}.

The proof proceeds by three reductions:

\paragraph{Step 1} 
Let $K=C_H(G)$. Then $H/K\le\Aut(G)$. So it suffices to bound
$|K|$.

\paragraph{Step 2} It suffices to bound $|Z(H)|$.

To see this, let $h_1,\ldots,h_t$ generate $H$. We know that $t$ is bounded
in terms of $G$: $t\le2\mu(G)$, where $\mu(G)$ is the maximal size of a 
minimal generating set for $G$. This is because $G$ is generated by 
commutators $[h,h']$ for $h,h'\in H$; choose a minimal set of commutators
which generate $G$, and replace $H$ by the subgroup generated by the elements
appearing in those commutators.

Next, for $i=1,\ldots,t$, define $\phi_i:K\to Z(G)$ by
\[\phi_i:x\mapsto[x,h_i].\]
Note that $[x,h_i]\in K\cap G=Z(G)$.

Take any $x,y\in K$. Then $[y,h_i]\in G$, so this element commutes with $y$.
So a standard commutator
identity shows that $[xy,h_i]=[x,h_i][y,h_i]$, that is, $\phi_i$ is a
homomorphism. Its kernel is $C_K(h_i)$ and its image is contained in $Z(G)$.
So $|K/C_K(h_i)|\le|Z(G)|$.

It follows that
\[|K/\bigcap_{i=1}^tC_K(h_i)|\le|Z(G)|^t.\]

Now $\bigcap_{i=1}^tC_K(h_i)=C_K(H)$ (since $H=\langle h_1,\ldots,h_t\rangle$)
and $C_K(H)=K\cap Z(H)=Z(H)$, because $Z(H)\le C_H(G)=K$. Thus we conclude
that $|K/Z(H)|\le|Z(G)|^t \le|Z(G)|^{\mu(G)/2}$, proving our claim.

\paragraph{Step 3} It suffices to bound the exponent of $Z(H)$ in terms of $G$.

We show that, without loss of generality, $\rk(Z(H))\le\rk(Z(G))$,
where $\rk(A)$ is the rank of the abelian group $A$, (the minimal number
of generators). It follows that 
\[|Z(H)|\le(\exp(Z(H))^{\rk(Z(G))},\]
and the step is complete.

Suppose that $\rk(Z(H))>\rk(Z(G))$. Then there is a subgroup $N$ of
$Z(H)$ with $Z(G)\cap N=1$. Then
\[N\cap G\le N\cap(Z(H)\cap G)\le N\cap Z(G)=1.\]
So
\[(H/N)'=H'N/N\cong G/(G\cap N)\cong G,\]
so $H/N$ is a smaller integral of $G$ and we can use that instead. This
reduction terminates with $\rk(Z(H))\le\rk(Z(G))$. \qed
\end{proof}

At this point, we hit an obstruction:

\begin{example}
For every $n\ge3$, the group $C_2$ has an integral
\[G_n=\langle a,b\mid a^{2^{n-1}}=b^2=1,b^{-1}ab=a^{2^{n-2}+1}\rangle\]
of order $2^n$, with $Z(G_n)$ cyclic of order $2^{n-2}$. Every proper subgroup
or factor group of the group $G_n$ is abelian, so it is not at all clear how
we could ``reduce'' it to a group with smaller cyclic centre, although clearly such groups do exist.
\end{example}


\section{Towards a characterization of integrable groups}
\label{sec:necessary}

Bettina Eick~\cite{eick} proved the following remarkable theorem. Here
$\Phi(G)$, $\Aut(G)$ and $\Inn(G)$ denote the Frattini subgroup, automorphism
group, and inner automorphism group of the group $G$.

\begin{theorem}[Eick~\cite{eick}]
The finite group $G$ is the Frattini subgroup of some group $H$ if and only if
$\Inn(G)\le\Phi(\Aut(G))$.
\end{theorem}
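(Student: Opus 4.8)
The statement is an equivalence, so I would treat the two implications separately, relying throughout on two standard facts about Frattini subgroups that I would record first. Write $A=\Aut(G)$ and $N=\Inn(G)$, and recall that $N\nor A$. The first fact is that a surjective homomorphism pushes Frattini subgroups inward: if $\phi\colon H\twoheadrightarrow K$ is onto then $\phi(\Phi(H))\le\Phi(K)$, since the preimage of a maximal subgroup of $K$ is a maximal subgroup of $H$. The second is the non-generator criterion in its normal form: for $N\nor K$ one has $N\le\Phi(K)$ if and only if there is no proper subgroup $L<K$ with $NL=K$.

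For necessity, suppose $G=\Phi(H)$ with $H$ finite. Then $G$ is characteristic, hence normal, in $H$, so conjugation gives a homomorphism $\alpha\colon H\to A$ with $\alpha(G)=N$; put $B=\alpha(H)$, so $N\le B\le A$. Applying the first fact to the surjection $H\twoheadrightarrow B$ gives $N=\alpha(G)=\alpha(\Phi(H))\le\Phi(B)$. It then remains to boost this from the subgroup $B$ up to $A$, and here I would use the Dedekind modular law: if $L\le A$ satisfies $NL=A$, then since $N\le B$ we get $B=B\cap NL=N(B\cap L)$, and now $N\le\Phi(B)$ with $N\nor B$ forces $B\cap L=B$, i.e.\ $B\le L$; hence $N\le L$ and $L=NL=A$. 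By the non-generator criterion this says exactly $N\le\Phi(A)$, as required.

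For sufficiency I would work with the reformulation $\Phi(H)=G \iff G\nor H,\ G\le\Phi(H)$, and $\Phi(H/G)=1$, the last equivalence using the standard identity $\Phi(H)/G=\Phi(H/G)$ valid once $G\le\Phi(H)$. The plan is to realize $G$ as a non-complemented normal subgroup of a suitable extension $1\to G\to H\to Q\to 1$ whose conjugation action $H\to A$ has large image and whose top quotient $Q$ satisfies $\Phi(Q)=1$. The verification that $G\le\Phi(H)$ would again run through the non-generator criterion: if $GM=H$ with $M\le H$, projecting to $A$ turns this into $N\cdot(\text{image of }M)$ covering the image of $H$, at which point the hypothesis $N\le\Phi(A)$ (via the same Dedekind argument) forces $M$ to project onto the whole image, reducing the problem to controlling the central part $Z(G)=G\cap C_H(G)$.

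The hard part will be the construction itself. Any split extension complements $G$ and hence fails $G\le\Phi(H)$, so the extension must be genuinely non-split; producing one in which, simultaneously, the image of the conjugation map in $A$ is large enough for the hypothesis $N\le\Phi(A)$ to take effect and the quotient $Q=H/G$ has trivial Frattini subgroup is the real content. I would attempt this by a cohomological (Frattini-cover) construction: start from $A=\Aut(G)$ acting on $G$ through the holomorph, and then pass to a non-split Frattini extension of an appropriate Frattini-free quotient, so that $G$ is buried in $\Phi(H)$ while $H/G$ is forced to have trivial Frattini subgroup. Checking that these two requirements can be met simultaneously, and that the resulting $H$ indeed has $\Phi(H)$ equal to $G$ rather than something strictly larger, is where the main effort and the essential use of the hypothesis will lie.
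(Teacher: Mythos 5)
First, a point of orientation: the paper does not prove this statement at all --- it is quoted verbatim from Eick's article \cite{eick} and used as motivation, so there is no internal proof to compare yours against. Judged on its own terms, your argument for the \emph{necessity} direction (if $G=\Phi(H)$ then $\Inn(G)\le\Phi(\Aut(G))$, which is Gasch\"utz's classical theorem) is correct and complete: the push-forward $\alpha(\Phi(H))\le\Phi(\alpha(H))$ is valid for finite groups, and the Dedekind-law argument promoting $N\le\Phi(B)$ to $N\le\Phi(A)$ via the non-generator criterion (using that $N=\Inn(G)$ is normal in $A$) is exactly the standard route.

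The genuine gap is in the \emph{sufficiency} direction, which is the actual content of Eick's theorem. What you have written there is a plan, not a proof: the reformulation $\Phi(H)=G\iff G\le\Phi(H)$ and $\Phi(H/G)=1$ is fine, and the observation that the extension must be non-split is correct, but the construction of a group $H$ realizing $G$ as its Frattini subgroup is precisely the step you defer (``the hard part will be the construction itself''), and nothing in the sketch shows that a non-split Frattini extension with the two required properties --- conjugation image large enough in $\Aut(G)$ for the hypothesis $\Inn(G)\le\Phi(\Aut(G))$ to bite, and $\Phi(H/G)=1$ --- actually exists, nor how the hypothesis would be used to rule out $\Phi(H)\supsetneq G$ or to handle the central part $G\cap C_H(G)$. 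This is where Eick's paper does all of its work (via an analysis of Frattini extensions), so as it stands the proposal establishes only one of the two implications.
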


This gives a test, involving looking only at $G$, to decide whether a group is
a Frattini subgroup.

However, this is false if we replace ``Frattini subgroup'' by ``derived
subgroup''. We showed in \cite{accm} that the non-abelian group $G$ of order
$27$ and exponent $9$ is not integrable; but its inner automorphism group is
contained in the derived group of its automorphism group. (The automorphism
group of $G$ has order~$54$; its derived group has order~$27$, so is a 
normal Sylow-$3$-subgroup and contains all $3$-subgroups of $\Aut(G)$,
including $\Inn(G)$ which has order $9$.)

An analogue of Eick's result for the derived group holds for various classes of groups, including abelian groups and perfect groups. Moreover, the test works in general one way round:

\begin{theorem}
If the group $G$ is integrable, then $\Inn(G)\le\Aut(G)'$; indeed
$\Inn(G)$ has an integral within $\Aut(G)$.
\label{t:nec}
\end{theorem}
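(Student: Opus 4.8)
The plan is to exploit the conjugation action of an integral on its derived subgroup. Since $G$ is integrable, there is a group $H$ with $H'\cong G$; identifying $G$ with $H'$, I may assume $G\nor H$ and $H'=G$. Because $G$ is normal in $H$, conjugation by any $h\in H$ restricts to an automorphism of $G$, so there is a well-defined homomorphism $\rho\colon H\to\Aut(G)$ sending $h$ to the automorphism $g\mapsto h^{-1}gh$ of $G$.

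The key observations are then two. First, when $h$ ranges over $G=H'$, these restricted conjugations are precisely the inner automorphisms of $G$, so $\rho(G)=\Inn(G)$ (this uses only that conjugation of $G$ by its own elements gives exactly $\Inn(G)$). Second, since $\rho$ is a homomorphism, it carries derived subgroups to derived subgroups: $\rho(H')=\rho(H)'$. Combining these gives
\[
\Inn(G)=\rho(G)=\rho(H')=\rho(H)'\le\Aut(G)',
\]
which is the first assertion.

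For the stronger statement, I set $L=\rho(H)$, a subgroup of $\Aut(G)$. The computation above shows $L'=\rho(H)'=\rho(H')=\rho(G)=\Inn(G)$, so $L$ is an integral of $\Inn(G)$ lying inside $\Aut(G)$, as claimed.

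There is no serious obstacle in this argument; the only point requiring a moment's care is the verification that $\rho$ is well-defined (which needs $G\nor H$) and that $\rho(G)$ equals $\Inn(G)$ exactly rather than merely being contained in it. Everything else is the standard fact that a homomorphism maps the derived subgroup of the domain onto the derived subgroup of the image.
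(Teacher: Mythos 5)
Your argument is correct and is essentially the paper's proof in different notation: your $\rho$ is the conjugation map $H\to\Aut(G)$ with kernel $K=C_H(G)$, so your $\rho(H)$ is the paper's $H/K$, and your identity $\rho(H)'=\rho(H')=\rho(G)=\Inn(G)$ is the paper's computation $(H/K)'=GK/K\cong G/Z(G)\cong\Inn(G)$ carried out inside $\Aut(G)$ rather than as an abstract quotient. No gap; the two proofs coincide.
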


\begin{proof}
Let $H$ be an integral of $G$, and $K=C_H(G)$; then $H/K$ embeds in $\Aut(G)$,
and $K\cap G=Z(G)$. We have
\[(H/K)'=H'K/K=GK/K\cong G/(G\cap K)=G/Z(G)\cong\Inn(G),\]
so $H/K$ is an integral of $\Inn(G)$ and is contained in $\Aut(G)$. Moreover,
$\Inn(G)=(H/K)'\le\Aut(G)'$ since $H/K\le\Aut(G)$. \qed
\end{proof}


\section{$p$-integrals}
\label{sec:p-integrals}

We say that a $p$-group (finite or infinite) is \emph{$p$-integrable}
if it has an integral which is a $p$-group.

As noted in \cite{accm}, Guralnick~\cite{guralnick} observed that an abelian
group $A$ of order $n$ has an integral of order $2n^2$, namely $A\wr C_2$. So
any finite abelian $2$-group has a $2$-integral.

More generally, any abelian $p$-group has a $p$-integral.
We give a more general argument which will be used in the next section also.

\begin{theorem}
\begin{enumerate}
\item[(a)]\label{7.a} Every abelian group has an integral which is nilpotent of class~$2$.
\item[(b)]\label{7.b} Every finite abelian $p$-group has a $p$-integral which is finite
and nilpotent of class~$2$.
\item[(c)]\label{7.c} Every abelian $p$-group has a $p$-integral which is a nilpotent
$p$-group of class~$2$.
\item[(d)]\label{7.d} Every periodic abelian group has an integral that is periodic and nilpotent of class $2$.
\end{enumerate}
\label{t:nilpotent-integral}
\end{theorem}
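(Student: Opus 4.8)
The plan is to realise $A$ as the derived subgroup of a class-$2$ nilpotent group, built out of \emph{Heisenberg blocks}, one for each cyclic piece of a presentation of $A$, and then passed to a central quotient. The wreath product $A\wr C_2$ from Guralnick's observation is an integral but is \emph{not} class $2$ unless $A$ has exponent $2$, so for class-$2$ integrals we are forced toward a Heisenberg-type construction. All four parts will come from one construction, the differences lying only in the choice of the cyclic pieces and hence in the finiteness and torsion properties of the result.

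First I would record the single block. For a cyclic group $C$ (infinite, or of order $m$) let $E_C=\langle x,y\mid [x,y]=z,\ [x,z]=[y,z]=1,\ x^{m}=y^{m}=z^{m}=1\rangle$, omitting the order relations when $C$ is infinite. Then $E_C$ is nilpotent of class $2$ with $E_C'=\langle z\rangle\cong C$; it is a finite $p$-group when $C$ is a finite cyclic $p$-group, and finite whenever $C$ is finite. Given any abelian group $A$, I would choose a generating set $\{a_j\}_{j\in J}$, let $C_j$ be an abstract cyclic group of the same order as $a_j$, and take the surjection $D=\Dir_{j\in J}C_j\to A$ sending the generator of $C_j$ to $a_j$, with kernel $S$. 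Forming the restricted direct product $H_D=\Dir_{j\in J}E_{C_j}$, the group $H_D$ is nilpotent of class $2$, and since the derived subgroup of a restricted direct product is the restricted direct product of the derived subgroups, $H_D'\cong\Dir_{j}C_j=D$.

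The key step is the central quotient. As $H_D$ has class $2$, its derived subgroup $H_D'\cong D$ is central; identifying $H_D'$ with $D$, the subgroup $S\le D=H_D'\le Z(H_D)$ is central, hence normal. Setting $H=H_D/S$ gives
\[
H'=H_D'S/S=D/S\cong A,
\]
so $H$ is a class-$2$ nilpotent integral of $A$ (of class exactly $2$ when $A\ne 1$). This proves (a): take $D$ free abelian and each $E_{C_j}$ the discrete Heisenberg group. For (b) and (c) take each $C_j$ a finite cyclic $p$-group, possible because in a $p$-group every element has $p$-power order; then each $E_{C_j}$ is a finite $p$-group, so $H_D$ and $H$ are $p$-groups, finite when $A$ is finite. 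For (d) a periodic abelian group is generated by elements of finite order, so each $C_j$ may be taken finite, each $E_{C_j}$ is finite, and $H_D$, $H$ are periodic; alternatively (d) follows from (c) by writing $A=\Dir_p A_p$ in primary components and taking the restricted direct product of $p$-group integrals of the $A_p$.

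The point needing most care—and the reason a \emph{Heisenberg-only} construction does not suffice—is that a general infinite abelian $p$-group need not be a direct sum of cyclic groups, so one cannot take $H$ itself to be a restricted direct product of blocks. The quotient trick circumvents this: every abelian $p$-group is a quotient $D/S$ of a direct sum of cyclic $p$-groups, and the only thing to verify is that $S$ lies in the central subgroup $H_D'\cong D$, which is automatic. The remaining checks are routine: that $E_C$ has the stated class, derived subgroup and central $z$; that a restricted direct product of finite $p$-groups (respectively finite groups) is a $p$-group (respectively periodic), which is immediate since each element is supported on finitely many coordinates; and that these properties pass to the quotient $H_D/S$.
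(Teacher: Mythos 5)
Your proof is correct and follows essentially the same route as the paper's: the paper's building blocks are the upper unitriangular $3\times 3$ matrix groups over $\mathbb{Z}$ or $\mathbb{Z}/p^{r}\mathbb{Z}$, which are exactly your Heisenberg blocks $E_C$, and the paper likewise realises $A$ as a central quotient $D/S$ of a direct sum of cyclic groups sitting inside the derived (hence central) subgroup of a direct sum of such blocks. Your version merely streamlines part (c) by mapping a direct sum of cyclic $p$-groups onto $A$ directly from a chosen generating set, where the paper first presents $A$ as a quotient of a free abelian group and then factors that map through the direct sum of cyclics.
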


\begin{proof}
Let $A$ be an abelian group. Recall that a group is nilpotent of class $2$ if and only if its derived group is nontrivial and central.

Suppose first that $A$ is the additive group of a ring $R$ with identity.
Then, as remarked in \cite{accm}, the group $G$ of upper unitriangular
$3\times3$ matrices over $R$ is nilpotent of class~$2$ and satisfies
$G'\cong A$.

Not every abelian group is the additive group of a ring with identity. For
if $A$ is the additive group of $R$, then the exponent of $A$ is equal to
the additive order of the identity of $R$; so a torsion abelian group of
unbounded exponent will fail this property. However, two classes of groups
which do have the property are
\begin{itemize}
\item Finitely generated abelian groups; such a group is a finite direct
sum of cyclic groups, and a cyclic group is the additive group of the ring
of integers or of integers mod~$n$, according as its exponent is infinite
or finite. (Part (b) of the theorem follows from this, since if $A$ is
finite then $|G|=|A|^3$.)
\item Free abelian groups. For let $A$ be a free abelian group. By the
previous case, we can assume that $A$ is not finitely generated. If its
rank is the cardinal number $\lambda$, then it is the additive group of the
ring of polynomials in $\lambda$ indeterminates over $\mathbb{Z}$.
\end{itemize}

Now, let $A = F/R$ be an abelian group, where $F$ is free abelian. Let $T$ be
an integral of $F$, with $F = T' \le Z(T)$. Then $R\le Z(T)$, so $R$ is normal
in $T$; setting $H = T/R$ we have $H' = F/R = A$.

The proof of (c) requires a little more care. 
Let $A$ be an abelian $p$-group, and write $A=F/R$, where $F$ is free abelian,
say $F=\displaystyle{\Dir_{i\in\Lambda}\langle f_i\rangle}$. There is an
epimorphism $\theta:F\to A$. Let the order of $f_i\theta$ be $p^{r_i}$. Now
let $G=\displaystyle{\Dir_{i\in\Lambda}C_i}$, where $C_i=\langle g_i\rangle$
is a cyclic group of order $p^{r_i}$, and let $\phi$ be the epimorphism from 
$F$ to $G$ defined by $f_i\phi=g_i$. We show that $\theta$ factors through
$\phi$. Let $f=\sum n_if_i$ (a finite sum) belong to the kernel of $\phi$.
Then $\sum n_ig_i=0$, so $p^{r_i}\mid n_i$ for all $i$; but this implies that
$f\theta=\sum n_i(f_i\theta)=0$, so $\sum n_if_i\in\ker(\theta)$. In other
words, $\ker(\phi)\le\ker(\theta)$.

Thus, there is an epimorphism $\psi:G\to A$ such that $\phi\psi=\theta$.
(For $g\in G$, define $g\psi=f\theta$, where $f$ is a preimage of $g$ under
$\phi$; the condition on kernels shows that this is well-defined.) So we
have $A\cong G/S$ for some $S$.

For each $i\in\Lambda$, let $D_i$ be a group isomorphic to the group of
upper unitriangular matrices of dimension $3$ over $\mathbb{Z}/p^{r_i} \mathbb{Z}$;
its centre, which is equal to its derived group, is cyclic of order $p^{r_i}$,
and we identify this group with $\langle f_i\theta \rangle$. Let
$H=\displaystyle{\Dir_{i\in\Lambda}D_i}$. Then $H$ is a $p$-group, and $H'=G$.
Also $S\le Z(H)$, so $S\lhd H$; and $S\le H'$, so
\[(H/S)'=H'/S=G/S\cong A,\]
and we are done.

Part (d) is a consequence of part (c), since a periodic abelian group is a
direct sum of $p$-groups. \qed
\end{proof}

For non-abelian $p$-groups, some are $p$-integrable, for there are $p$-groups
of arbitrarily large derived length. But of the groups of order $8$,
the three abelian groups are $2$-integrable; the dihedral group
is not integrable; and the quaternion group is integrable (it has an integral
$\mathrm{SL}(2,3)$ of order $24$) but  not $2$-integrable. Indeed, the
following two theorems were proved by Burnside~\cite{burnside}. Either of
them deals with $Q_8$.

\begin{theorem}[Burnside~\cite{burnside}]
\begin{enumerate}
\item[(a)] A non-abelian $p$-group with cyclic centre is not $p$-integrable.
\item[(b)] A non-abelian  $p$-group whose derived group has index $p^2$ is not
$p$-integrable.
\label{t:not-p-int}
\end{enumerate}
\end{theorem}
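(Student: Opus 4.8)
The plan is to argue by contradiction in both parts: assume $G$ is non-abelian and that it has a $p$-group integral $H$, and identify $G$ with $H'$, so that $G'=H''\neq 1$. Throughout I would exploit the standard features of $p$-groups: every non-trivial normal subgroup of $H$ meets $Z(H)$ non-trivially; a normal subgroup of order $p$ is central (since $\Aut(C_p)$ has order prime to $p$); and $H$ acts on the Frattini quotient $G/\Phi(G)\cong C_p^{\,d}$ through a $p$-subgroup of $\GL(d,p)$, hence unipotently, with $G=H'$ acting trivially (inner automorphisms act trivially on $G/\Phi(G)=G^{\mathrm{ab}}$). I would also use repeatedly that $Z(H)\cap G\le Z(G)$.

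For part (a) the goal is to show that the centre of a non-abelian derived subgroup of a $p$-group cannot be cyclic. First I would locate a distinguished central element of order $p$: since $G$ is nilpotent and non-abelian, $G'\cap Z(G)\neq 1$; if $Z(G)$ were cyclic then $\Omega_1(Z(G))=\langle z\rangle$ has order $p$, lies in $G'=H''$, and, being a normal subgroup of order $p$ in $H$, satisfies $z\in Z(H)$. The heart of the argument is then to show that the presence of $z$ as a commutator of elements of $G=H'$ which is simultaneously central in $H$ forces a second, independent element of order $p$ into $Z(G)$. To produce it I would pass to the class-two quotient $\bar H=H/\gamma_3(H)$, where the commutator becomes an alternating bilinear form $\bar\beta\colon H^{\mathrm{ab}}\times H^{\mathrm{ab}}\to H'/\gamma_3(H)$ whose image generates $\bar G=G/\gamma_3(H)$, and analyse the radical of $\bar\beta$ together with the structure it induces on $G'$; the cyclicity of $Z(G)$ makes this value space too small to accommodate a non-abelian $G$, yielding the contradiction.

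For part (b) I would first extract structure from the hypothesis $|G:G'|=p^2$. Since $G$ is non-abelian it is non-cyclic, so $d(G)=2$; and as $\Phi(G)=G'G^p\supseteq G'$ has index $p^2$ we get $\Phi(G)=G'$ and $G/G'\cong C_p\times C_p$. Thus $G$ is $2$-generated, which forces $G'/\gamma_3(G)$ to be cyclic (it is generated by the single commutator of the two generators). The plan is to combine this with the $p$-integrability: the unipotent action of $H$ on the two-dimensional space $G/\Phi(G)$ fixes a line, so $[H,G]=\gamma_3(H)$ has image contained in a one-dimensional subspace, severely restricting how the commutators $[x,y]$ ($x,y\in H$) that generate $G=H'$ can fill out $G$. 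Reconciling ``$G$ generated by such commutators'' with ``$\gamma_3(H)$ small and $G'/\gamma_3(G)$ cyclic'' should force $G'=1$, contradicting non-abelianness; alternatively one shows directly that such a $G$ must have cyclic centre and invokes part (a).

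The main obstacle in both parts is the same: controlling how $G=H'$ is generated by commutators beyond the class-two (exterior-square) layer. Modulo $\gamma_3(H)$ everything is clean bilinear algebra, but $H$ need not have class $2$ and $H''=G'$ need not be central in $H$, so the delicate point is to carry the linear-algebraic conclusion about $G/\gamma_3(H)$ down through the lower central series of $H$ to a genuine statement about $Z(G)$ (for (a)) or about $d(G)$ (for (b)). I expect this collection step --- showing the ``value group'' of the commutator form is forced to be non-cyclic, respectively of rank at least two --- to be where the real work lies, and where Burnside's original commutator computations would need to be reproduced or replaced by a clean module-theoretic argument over $\mathbb{F}_p[H^{\mathrm{ab}}]$.
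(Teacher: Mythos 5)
You should first note that the paper does not prove this theorem at all: it is stated with an attribution to Burnside's 1913 paper and used as a quoted result, so there is no internal proof to match your argument against. Your proposal therefore has to stand on its own, and as written it does not: in both parts the preparatory observations are correct, but the step that actually produces the contradiction is missing. For part (a), the reductions are fine --- $G'\cap Z(G)\neq1$, so cyclicity of $Z(G)$ gives a characteristic subgroup $\langle z\rangle=\Omega_1(Z(G))$ of order $p$ inside $H''$ which is normal, hence central, in $H$ --- but the announced ``heart of the argument'' is only a promise. Passing to $\bar H=H/\gamma_3(H)$ cannot work as described: in that quotient $G/[G,H]$ is central and $G'\le\gamma_3(H)$ has been killed, so the class-two (exterior-square) picture carries no information about whether $G$ is abelian or what $Z(G)$ is; you say yourself that carrying the conclusion back down the lower central series is ``where the real work lies.'' That work is precisely Burnside's theorem, and it is not done here.

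For part (b), the structural facts you extract are right ($d(G)=2$, $\Phi(G)=G'$, $G'/\gamma_3(G)$ cyclic, and $[G/\Phi(G),H]$ contained in a line by unipotence), and one can push a little further than you do (e.g.\ unipotence on the two-dimensional space gives $\gamma_4(H)\le G'$, while $G'=[\gamma_2(H),\gamma_2(H)]\le\gamma_4(H)$, so $\gamma_4(H)=G'$), but ``reconciling these should force $G'=1$'' is an assertion of the conclusion, not an argument. Worse, your fallback route is based on a false claim: a non-abelian $p$-group with $|G:G'|=p^2$ need \emph{not} have cyclic centre, so (b) does not reduce to (a). For a counterexample take $p\ge5$ and let $G$ be the relatively free $2$-generator group of exponent $p$ and nilpotency class $3$: then $G'=\gamma_2(G)$ has index $p^2$ and $Z(G)=\gamma_3(G)\cong C_p\times C_p$. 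So the proposal should be regarded as a correct setting of the stage followed by a gap exactly where Burnside's commutator computations (or a modern replacement for them) are required.
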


Another open problem is to find the smallest $p$-integral of a given
$p$-integrable group. For the three abelian groups of order $8$, the smallest
$2$-integral of the cyclic group has order $32$, and for the other two the
smallest $2$-integral has order $64$.

\medskip

We consider further the question of the smallest $2$-integral of an elementary
abelian $2$-group, since this is relevant for the discussion of integrals of
infinite abelian groups in the next section. Any elementary abelian $2$-group
of order $n>4$ has an integral of order $n^2$, namely a \emph{Suzuki 
$2$-group}, see Higman~\cite{higman}. However, we can do substantially better.

Let $A$ be an elementary abelian $2$-group of order $2^n$.  We start with an
example. Logarithms are in base $2$.

\begin{example}
\label{thm:p-integral-example}
Suppose that $A$ is elementary abelian of order $2^n$, and let $k$ be a positive integer with $k>\log n$. Let $H$ be an abelian $2$-group of order $2^k$ and consider the standard wreath product
\[W = C_2\wr H = B\rtimes H,\]
where $C_2$ is a cyclic group of order $2$, and $B$ is the base group of the product. We have that $W'\le B$ is an elementary abelian $2$-group of index $2$ in $B$; hence
\[|W'| = 2^{|H|-1} = 2^{2^k-1}\ge 2^n.\]
Since $W$ is nilpotent, it admits a normal subgroup $N \le W'$, with $|W'/N| = 2^n$; we let $G = W/N$. Then
$G' = W'/N\cong A$, so $G$ is an integral of $A$, and 
\[ |G| = 2^{n+1}2^k.\]
Now, we may well take $k = \lfloor\log n\rfloor+1$, and obtain
\[|G| \le |A|2^{\lfloor\log n\rfloor+2}.\]
Thus, $f(n) \le n + \lfloor\log n\rfloor+2$,
where $f(n)=\log F(A)$ is the function defined in Theorem~\ref{thm:integral-bound}.
\end{example}

Observe that the inequality above implies
\begin{equation}
|G/A| \le 4\log |A|.
\end{equation}

\medskip
Now, we aim at a lower bound for $f(n)$. We require the following results \cite[Lemmas 4.1 and 4.2]{accm}.

\begin{lemma}\label{lemma1} Let $H$ be a $2$-group acting by automorphisms on the finite elementary abelian $2$-group $A$, then
\[|A/[A,H]|\ge |A| ^{1/|H|}.\]
\end{lemma}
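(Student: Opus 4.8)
The plan is to recast the statement in linear algebra over $\GF(2)$ and then exploit that $H$ is a $2$-group. Write the elementary abelian group $A$ additively as a finite-dimensional vector space $V$ over $\GF(2)$, on which $H$ acts linearly. In this language the commutator $[a,h]=a^{-1}a^h$ becomes $(h-1)a$, so $[A,H]$ is the subspace $\sum_{h\in H}(h-1)V=IV$, where $I$ is the augmentation ideal of the group algebra $\GF(2)[H]$, and $A/[A,H]$ is the coinvariant space $V/IV$. Since $|A|=2^{\dim V}$ and $|A/[A,H]|=2^{\dim(V/IV)}$, the asserted inequality is equivalent to $\dim(V/IV)\ge \dim V/|H|$.

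The quickest route is Nakayama's lemma. Because $H$ is a $2$-group and $\cha\GF(2)=2$, the algebra $\GF(2)[H]$ is local with unique maximal ideal $I$ (equivalently, $I$ is nilpotent and is the Jacobson radical). Lifting a $\GF(2)$-basis of $V/IV$ to elements of $V$ then produces, by Nakayama, a generating set of $V$ as a $\GF(2)[H]$-module of size $d:=\dim(V/IV)$. This gives a surjection $\GF(2)[H]^d\twoheadrightarrow V$, whence $\dim V\le d\cdot\dim\GF(2)[H]=d\,|H|$, which is exactly $\dim(V/IV)\ge\dim V/|H|$.

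If one prefers to avoid module machinery, an elementary induction on $|H|$ works. The base case $|H|=2$ uses that an involution $t$ satisfies $(t-1)^2=t^2-1=0$ over $\GF(2)$, so $\operatorname{im}(t-1)\subseteq\ker(t-1)$ and hence $\dim[A,t]=\dim\operatorname{im}(t-1)\le\tfrac12\dim V$, giving the bound. For the inductive step, choose a maximal subgroup $M\nor H$ of index $2$ and set $C=[A,M]$; since $M\nor H$, the subspace $C$ is $H$-invariant, and for any $t\in H\setminus M$ one checks that $[A,H]=C+(t-1)V$, so $A/[A,H]\cong \bar A/[\bar A,\bar t]$ with $\bar A=A/C$ acted on by the order-$2$ image $\bar t$. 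Combining the base case with the inductive hypothesis $|A/C|\ge |A|^{1/|M|}=|A|^{2/|H|}$ yields $|A/[A,H]|\ge |A/C|^{1/2}\ge |A|^{1/|H|}$.

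The main obstacle differs between the two arguments but is minor in each. In the first, it is the justification of Nakayama, namely the locality of $\GF(2)[H]$; this is the standard fact that a $p$-group in characteristic $p$ has only the trivial irreducible module. In the second, it is verifying the identity $[A,H]=[A,M]+[A,t]$, i.e.\ that once $M$ acts trivially on $\bar A$ only the coset representative $t$ contributes to the commutator. Either way the genuine content is the single observation that $H$ being a $2$-group forces all composition factors of $V$ to be trivial, so I anticipate no serious difficulty.
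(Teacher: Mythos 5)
Your proof is correct. One point of comparison to flag first: the present paper does not actually prove this lemma --- it is imported verbatim from the earlier paper (Lemma 4.1 of \cite{accm}) and used as a black box --- so the relevant comparison is with the proof given there, which is the elementary inductive argument you sketch second: the base case via $(t-1)^2=0$, hence $[A,t]\le C_A(t)$ and $|[A,t]|\le|A|^{1/2}$, followed by descent through a maximal (index-$2$, normal) subgroup $M$. Your verification of the key identity $[A,H]=[A,M]+(t-1)A$ is the right thing to check, and it does go through via $(mt-1)=(t-1)+(m-1)+(m-1)(t-1)$, which absorbs the $M$-translates of $(t-1)A$ into $[A,M]+(t-1)A$; the only cosmetic fix is to start the induction at $|H|=1$ so the trivial group is covered. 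Your first argument, via Nakayama over the local algebra $\GF(2)[H]$ with nilpotent augmentation ideal $I$, is a genuinely cleaner repackaging: the surjection $\GF(2)[H]^d\twoheadrightarrow V$ with $d=\dim(V/IV)$ gives the bound in one line, makes transparent that the only input is the triviality of all composition factors of $V$, and generalizes verbatim to a finite $p$-group acting on a finite elementary abelian $p$-group (yielding $|A/[A,H]|\ge|A|^{1/|H|}$ for every prime $p$), whereas the inductive route leans on the existence of an index-$2$ normal subgroup and the special base case. What the elementary route buys in exchange is self-containedness: it needs nothing beyond the homomorphism $a\mapsto[a,t]$ and rank-nullity, which is presumably why \cite{accm} phrases it that way.
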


\begin{theorem}\label{lemma2-red} Let $A$ be a finite elementary abelian $2$-group, and $G$ a $2$-group such that $G'=A$; write $H = G/A$, then
\begin{equation}
|H|\log^2 |H| \ge 2\log |A|.
\end{equation}
\end{theorem}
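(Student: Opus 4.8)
The plan is to bound $\log|A|$ from above in terms of $|H|$ by combining the commutator structure of $G$ with Lemma~\ref{lemma1} and the Burnside basis theorem. Since $A=G'$, the quotient $H=G/A$ is an abelian $2$-group, and I regard $A$ as an $\mathbb{F}_2$-vector space on which $H$ acts by conjugation: the conjugation action of $G$ on the abelian group $A$ is trivial on $A$ itself and so factors through $H$. Observe that $[A,H]=[A,G]=\gamma_3(G)$, so that $A/[A,H]=\gamma_2(G)/\gamma_3(G)$.

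First I would control this bottom layer $A/[A,H]$. Fix a minimal generating set $x_1,\dots,x_d$ of $G$, where $d$ is the minimal number of generators. The key point is that, modulo $\gamma_3(G)$, the commutator map is bi-additive and alternating --- this rests on the identities $[xy,z]\equiv[x,z][y,z]$ and $[x,yz]\equiv[x,y][x,z]\pmod{\gamma_3(G)}$ --- so it descends to an alternating bilinear map $H\times H\to\gamma_2/\gamma_3$ whose image generates $\gamma_2/\gamma_3$. Hence $\gamma_2/\gamma_3$ is generated by the $\binom{d}{2}$ elements $[x_i,x_j]\gamma_3$ with $i<j$, giving $\log|A/[A,H]|\le\binom{d}{2}\le d^2/2$.

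Next I would feed this into Lemma~\ref{lemma1}: applying it to the action of $H$ on $A$ yields $|A|^{1/|H|}\le|A/[A,H]|$, whence $\log|A|\le|H|\cdot\log|A/[A,H]|\le(d^2/2)\,|H|$. It then remains only to replace $d$ by $\log|H|$, which is where the Burnside basis theorem enters: since $G$ is a $2$-group, $d=\dim_{\mathbb{F}_2}G/\Phi(G)=\dim_{\mathbb{F}_2}H/H^2$ because $\Phi(G)=G^2\gamma_2(G)$, and therefore $|H|\ge|H/H^2|=2^d$, i.e.\ $d\le\log|H|$. Substituting gives $\log|A|\le(\log^2|H|/2)\,|H|$, which is exactly the claimed inequality after rearrangement.

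I expect the main obstacle to be the middle step --- checking cleanly that $\gamma_2/\gamma_3$ is generated by the commutators of a minimal generating set, via the bilinearity of the commutator map modulo $\gamma_3$ and the identification of the quotient as the target of an alternating form. Once this is in place, the passage through Lemma~\ref{lemma1} (which converts the bound on the coinvariants $A/[A,H]$ into a bound on all of $A$) and the Burnside basis theorem are routine.
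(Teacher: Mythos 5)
Your argument is correct: the chain $\log|A|\le |H|\cdot\log|A/[A,H]|$ (Lemma~\ref{lemma1}), $\log|A/[A,H]|=\log|\gamma_2/\gamma_3|\le\binom{d}{2}$ via bilinearity of the commutator modulo $\gamma_3$, and $d\le\log|H|$ via $G/\Phi(G)\cong H/H^2$ all check out, and the trivial cases ($H$ infinite, or $d\le1$) cause no trouble. The paper itself does not reprove this statement but cites \cite[Lemmas 4.1 and 4.2]{accm}, and your proposal is essentially a reconstruction of that cited argument, so there is nothing genuinely different to compare.
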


\begin{cor}\label{corol-red} Let $A$ be a $2$-group, with $A/A^2$ infinite, and let $G$ be a $2$-group such that $A = G'$. Then $G/A$ is infinite.
\end{cor}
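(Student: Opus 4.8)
The plan is to reduce to the elementary abelian case and then derive a contradiction from Theorem~\ref{lemma2-red} by producing finite elementary abelian quotients of $A$ of unbounded order while keeping the quotient $G/A$ of bounded order.

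First I would pass from $A$ to $A/A^2$. Since a group of exponent $2$ is abelian, $A/A^2$ is elementary abelian, and by hypothesis it is infinite. As $A^2$ is characteristic in $A$ and $A$ is normal in $G$, the subgroup $A^2$ is normal in $G$; moreover $(G/A^2)'=G'A^2/A^2=A/A^2$ and $(G/A^2)/(A/A^2)\cong G/A$. Hence, replacing $G$ by $G/A^2$, I may assume from the outset that $A=G'$ is an infinite elementary abelian $2$-group, and it suffices to prove that $G/A$ is infinite in this situation.

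Suppose, for contradiction, that $H:=G/A$ is finite. Since $A$ is abelian, conjugation makes $A$ a module for the finite $2$-group $H$, that is, an $\mathbb{F}_2 H$-module, where $A$ is an infinite-dimensional $\mathbb{F}_2$-vector space. The heart of the argument is to construct, for every $k$, an $H$-invariant subspace $N\le A$ with $A/N$ finite of dimension at least $k$. For this I would exploit that $\mathbb{F}_2 H$ is a finite-dimensional local algebra whose augmentation ideal $J$ is nilpotent: for any nonzero module $M$ one has $M\ne JM$ (otherwise $M=J^nM=0$), so $M/JM$ is a nonzero trivial module and hence $M$ possesses a maximal submodule with one-dimensional quotient. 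Applying this repeatedly to $A$ yields a strictly descending chain $A=A_0\supsetneq A_1\supsetneq\cdots$ of $H$-submodules with $\dim(A_{j-1}/A_j)=1$; since each $A_j$ remains infinite-dimensional the chain never terminates, and $N=A_k$ gives $A/N$ elementary abelian of order $2^k$.

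Each such $N$ is $H$-invariant, hence normal in $G$, and because $N\le A=G'$ we obtain $(G/N)'=A/N$ with $(G/N)/(A/N)\cong H$. Thus $G/N$ is a finite $2$-group with derived quotient $H$ of fixed order, and Theorem~\ref{lemma2-red} forces $|H|\log^2|H|\ge 2\log|A/N|=2k$. Letting $k\to\infty$ contradicts the finiteness of $H$, so $G/A$ must be infinite. The main obstacle is the middle step: guaranteeing invariant subspaces of arbitrarily large \emph{finite} codimension. The nilpotency of $J$ is exactly what prevents the descending chain of invariant subspaces from stabilising, and it is what makes the one-dimensional-quotient argument go through; the same Artinian reasoning would apply for a general finite acting group, at the cost of a little more bookkeeping on composition-factor dimensions.
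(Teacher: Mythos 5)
Your proof is correct and follows the same skeleton as the paper's: reduce modulo $A^2$ to the infinite elementary abelian case, assume $G/A$ finite, produce $G$-invariant subgroups $N\le A$ with $A/N$ finite and arbitrarily large, and contradict Theorem~\ref{lemma2-red}. The only difference lies in how $N$ is obtained --- the paper simply takes the normal core in $G$ of an arbitrary finite-index subgroup of $A$, which still has finite index because $G/A$ being finite leaves only finitely many conjugates to intersect, whereas you build a descending chain of codimension-one $\mathbb{F}_2H$-submodules via the nilpotency of the augmentation ideal; both are valid, the paper's version being somewhat more elementary.
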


\begin{proof} As $A^2$ is characteristic in $A$ and $(G/A^2)'= A/A^2$, we may well assume $A^2=1$, so that $A$ is an infinite elementary abelian $2$-group. 

Suppose, by contradiction, that $G/A$ is finite. Given any finite index subgroup
$H \le A$, its normal core $H_G$ in $G$ has finite index too, so by taking finite index subgroups of $A$ of increasingly larger order, 
we find subgroups $N\le A$ with $N\unlhd G$ and $A/N$ finite and arbitrarily large. But $A/N$ is the derived subgroup of $G/N$, and this contradicts Lemma \ref{lemma2-red}. \qed
\end{proof}

According to \textsf{GAP}~\cite{gap}, the order of the smallest $p$-integral
of an elementary abelian group $A$ is $8|A|$ for $|A|=2^k$ with $2\le k\le 5$,
and $9|A|$ for $|A|=3^k$ with $1\le k\le 4$. However, as we have seen above,
such bounds cannot hold in general. A small open problem: find the largest
value of $k$ for which one of them holds. 
For example, if $p=2$, the above
result shows that, if the elementary abelian $2$-group of order $2^k$ has
index $8$ in its smallest $2$-integral, then $2^3 \cdot 3^2\ge2k$, so $k\le36$.
What is the exact value?

We will see in Corollary \ref{p:int-p-gp} an estimate of the smallest order of
a $p$-integral of a finite abelian $p$-group when $p$ is an odd prime number.


\section{Integrals of abelian groups}
\label{sec:integrabelian}

We know that every abelian group has an integral. Here, we are concerned with the existence of integrals of an abelian group that are in some sense `close'  to the group.

\subsection{Nilpotent integrals}

We have seen that every abelian group has an integral which is nilpotent of
class~$2$, in Theorem~\ref{t:nilpotent-integral} above.
Let us add some observations in the finite case. 

\begin{lemma}\label{lemma00} Let $A$ be a finite abelian $p$-group of rank $d$ and exponent $p^n$. Then there exists a finite $p$-group $G$ such that $A = G'$ and $|G| = |A|p^{n+d}$. If $p=2$, $|G| = |A|2^{d+1}$.
\end{lemma}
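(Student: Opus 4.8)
The plan is to realize $A$ explicitly as the derived subgroup of a metabelian semidirect product of the form $M\rtimes\langle y\rangle$, choosing the abelian base $M$ to be only slightly larger than $A$ and the acting part to be a single cyclic group. By the structure theorem write $A\cong C_{p^{n_1}}\times\cdots\times C_{p^{n_d}}$ with $n=n_1\ge\cdots\ge n_d\ge1$, so that $|A|=p^{n_1+\cdots+n_d}$ and the rank and exponent are read off as $d$ and $p^n$ respectively.

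First I would treat odd $p$. Set $M=\prod_{i=1}^d\langle x_i\rangle$ with $x_i$ of order $p^{n_i+1}$, and let $y$ act on $M$ by $x_i\mapsto x_i^{1+p}$ on each factor. Since $M$ is abelian and $\langle y\rangle$ is cyclic, one has $[M,M]=1=[\langle y\rangle,\langle y\rangle]$, so $G'=[M,\langle y\rangle]=(\sigma-1)M$, where $\sigma$ denotes the action of $y$; on the $i$-th factor $\sigma-1$ is multiplication by $p$, whence $G'=\prod_i\langle x_i^p\rangle\cong\prod_iC_{p^{n_i}}\cong A$. The point to verify is that $\sigma$ is a $p$-element of order exactly $p^n$: the order of $1+p$ in $(\Z/p^{n_i+1}\Z)^\times$ is $p^{n_i}$ for odd $p$, so the order of $\sigma$ is $\mathrm{lcm}_i\,p^{n_i}=p^n$, and we may take $y$ of order $p^n$. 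Then $G=M\rtimes\langle y\rangle$ is a $p$-group with $|G|=|M|\cdot p^n=|A|p^d\cdot p^n=|A|p^{n+d}$.

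For $p=2$ the same template applies, but the arithmetic of $(\Z/2^k\Z)^\times$ differs, and exploiting this is exactly what yields the sharper bound. Here I would keep $M=\prod_i\langle x_i\rangle$ with $x_i$ of order $2^{n_i+1}$, but let $y$ have order $2$ and act by inversion $x_i\mapsto x_i^{-1}$ on every factor, i.e. take $G$ to be the generalized dihedral group of $M$. Then $\sigma-1$ is multiplication by $-2$, so $G'=\prod_i\langle x_i^2\rangle\cong A$ exactly as before, while now the acting group has order only $2$, giving $|G|=|M|\cdot 2=|A|2^d\cdot 2=|A|2^{d+1}$.

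The routine parts are the structure-theorem bookkeeping and the commutator computation $[x_i,y]=x_i^{(\mathrm{scalar})-1}$; the one genuinely delicate point, and the reason the two cases split, is the order of the chosen automorphism of each cyclic factor. For odd $p$ one is forced to inflate each factor from $p^{n_i}$ to $p^{n_i+1}$ precisely so that $1+p$ (rather than the non-$p$-element ``multiplication by $2$'') is available as a unipotent automorphism of $p$-power order, which costs the extra $p^n$ coming from the order of $y$; for $p=2$ inversion is already an automorphism of order $2$ of any cyclic $2$-group, so no large acting group is needed and the overhead drops to $2^{d+1}$. I would also double-check that $(\sigma-1)M$ is genuinely all of $[M,\langle y\rangle]$, using $(\sigma^j-1)M\subseteq(\sigma-1)M$, to be sure no further commutators enlarge $G'$.
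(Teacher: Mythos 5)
Your construction is exactly the paper's: inflate each cyclic factor of $A$ by one power of $p$ to get an abelian base group of order $|A|p^d$, then form the semidirect product with a cyclic group acting by $x\mapsto x^{1+p}$ (of order $p^n$) for odd $p$, or by inversion (of order $2$) for $p=2$. The proof is correct and matches the paper's argument in all essentials, including the verification that the derived subgroup is $(\sigma-1)M\cong A$.
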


\begin{proof} Let $A = \langle x_1\rangle\times \ldots \times \langle x_d\rangle$, with $|x_1|=p^n$.  
Then consider a group $N = \langle y_1\rangle\times \ldots \times \langle y_d\rangle$, where, for every $i = 1, \ldots, d$, $y_i^p = x_i$; let $\alpha $ be the automorphism of $N$ defined by $y_i^{\alpha} = y_i^{p+1}$; thus, $|\alpha| = p^n$.
(A simple induction shows that $(1+p)^{p^n}\equiv1\pmod{p^{n+1}}$.) 
Finally, set $G = N\sdir\langle \alpha\rangle$. Then $|G| = |N||\alpha| = |A|p^dp^n$, and
\[ G' = [N, \alpha] = \{[y_i,\alpha] \mid i = 1, \ldots, d\} = \{y_i^p \mid i = 1, \ldots, d\}  =  A.\]
If $p=2$, the automorphism $\alpha$ above may be taken to be the inverse map, yielding $|G| = |A|2^{d+1}$. \qed
\end{proof}

We do not claim that for a single abelian $p$-group $A$ this construction yields a nilpotent integral of smallest possible order: for instance, if $A$ is the direct sum of $p-1$ cyclic groups of order $p^n$, then $A$ is isomorphic to the derived subgroup of $H = C_{p^n}\wr C_p$, and $|H| = |A|p^{n+1}$.  It however provides a smallest nilpotent integral when $A$ is cyclic and $p\ne 2$.

\begin{lemma}\label{lemma01} Let $p$ be an odd prime and $G$ a finite non-abelian $p$-group such that $G'$ is cyclic of order $p^n$. Then $|G| \ge p^{2n+1}$.
\end{lemma}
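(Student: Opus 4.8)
The plan is to reduce to a two-generated group and then read off the bound from the order of the abelianization. \textbf{Reduction to two generators.} Since $G$ is non-abelian and $G'=\langle c\rangle$ is cyclic of order $p^n$, the commutators generate $\langle c\rangle$, so they cannot all lie in $\langle c^p\rangle$; hence some commutator $[a,b]$ has order $p^n$, i.e.\ generates $G'$. Setting $H=\langle a,b\rangle$ we have $H'\le G'$ and $[a,b]\in H'$ generates $G'$, so $H'=G'\cong C_{p^n}$ and $|H|\le|G|$. It therefore suffices to prove the bound for $H$, and we may assume $G=H$ is two-generated. Inside this two-generated group I would re-choose $a,b$ so that their images form an \emph{invariant-factor basis} of the abelian $p$-group $G/G'$ (possible, as $G/G'$ has rank at most $2$); by the Burnside basis theorem they still generate $G$, and for any generating pair the image of $[a,b]$ generates $\gamma_2(G)/\gamma_3(G)$, so (since $G'$ is cyclic) $[a,b]$ again generates $G'$. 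Writing $p^\alpha,p^\beta$ for the orders of $\bar a,\bar b$ in $G/G'$, the invariant-factor choice gives $\langle\bar a\rangle\cap\langle\bar b\rangle=1$, so $|G/G'|=p^{\alpha+\beta}$ and $|G|=p^{n+\alpha+\beta}$. Thus the theorem reduces to proving $\alpha+\beta\ge n+1$.

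The engine is commutator collection together with lifting-the-exponent. Write $c=[a,b]$ and $c^a=c^r$, $c^b=c^q$, where $r,q\equiv1\pmod p$ because $G/C_G(G')$ embeds in the Sylow $p$-subgroup of $\Aut(C_{p^n})$. The identity $[xy,z]=[x,z]^y[y,z]$ gives, by induction, $[a^m,b]=c^{(r^m-1)/(r-1)}$. Let $p^{s_r}$ and $p^{s_q}$ be the orders of the automorphisms induced by $a$ and $b$ on $G'$, so $v_p(r-1)=n-s_r$ and $v_p(q-1)=n-s_q$. The key arithmetic input, valid precisely because $p$ is odd, is that $v_p( (r^{p^\alpha}-1)/(r-1) )=\alpha$. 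I would also record two relations from the chosen generators: writing $a^{p^\alpha}=c^x$ (as $\bar a$ has order $p^\alpha$), the fact that $a$ commutes with its own power $a^{p^\alpha}$ forces $x(r-1)\equiv0\pmod{p^n}$, i.e.\ $v_p(x)\ge s_r$; and evaluating $[a^{p^\alpha},b]=[c^x,b]=c^{x(q-1)}$ against the collection formula gives $x(q-1)\equiv(r^{p^\alpha}-1)/(r-1)\pmod{p^n}$.

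Now the case analysis. If $\alpha\ge n$ or $\beta\ge n$, then since $a,b\notin G'$ we have $\alpha,\beta\ge1$, so $\alpha+\beta\ge n+1$ immediately. Otherwise both $\alpha,\beta<n$; then $(r^{p^\alpha}-1)/(r-1)$ is nonzero mod $p^n$ of valuation $\alpha$, so comparing valuations in $x(q-1)\equiv(r^{p^\alpha}-1)/(r-1)$ yields $v_p(x)+(n-s_q)=\alpha$, whence $v_p(x)=\alpha-n+s_q$. Combined with $v_p(x)\ge s_r$ this gives $\alpha\ge n+s_r-s_q$, and the symmetric computation (exchanging $a$ and $b$) gives $\beta\ge n+s_q-s_r$. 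Adding, $\alpha+\beta\ge 2n$, which contradicts $\alpha+\beta<2n$. Hence the case ``both $<n$'' cannot occur, and in all cases $\alpha+\beta\ge n+1$, giving $|G|\ge p^{2n+1}$.

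The step I expect to be most delicate is the bookkeeping around the abelianization: one must choose the generators so that $|G/G'|$ is \emph{exactly} $p^{\alpha+\beta}$ (otherwise a nontrivial $\langle\bar a\rangle\cap\langle\bar b\rangle$ would weaken the count) while simultaneously keeping $[a,b]$ a generator of $G'$, and these two requirements are reconciled through the $\gamma_2/\gamma_3$ argument. The other essential point is oddness of $p$: the lifting-the-exponent identity $v_p((r^{p^\alpha}-1)/(r-1))=\alpha$ fails for $p=2$, and the conclusion itself is false there, since Lemma~\ref{lemma00} already produces $2$-groups with cyclic derived subgroup of order $2^n$ and total order only $2^{n+2}$. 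Any correct argument must therefore invoke oddness at exactly this step.
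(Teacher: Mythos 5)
Your proof is correct, but it takes a genuinely different route from the paper's. The paper sets $C=C_G(G')$, writes $G=C\langle y\rangle$, and splits into the cases $C'=G'$ (handled by a direct coset count using $[a,b,b]=1$) and $C'<G'$ (handled by reducing to $G=\langle x,y\rangle$ and an induction on $n$ whose engine is the congruence $[x,y^p]\equiv[x,y]^p[x,y,y]^{p(p+1)/2}$ modulo $[G',y,y]$, oddness of $p$ entering through $p\mid p(p+1)/2$). You instead reduce to a two-generated group once and for all --- your observation that a single commutator must generate the cyclic group $G'$, together with the $\gamma_2/\gamma_3$ argument showing that any generating pair then works, is sound, the hidden ingredient being $[G',G]\le (G')^p$, which holds because the $p$-part of $\Aut(C_{p^n})$ consists of the maps $c\mapsto c^r$ with $r\equiv1\pmod p$ --- and you replace the induction by one symmetric valuation computation: the congruences $x(q-1)\equiv(r^{p^\alpha}-1)/(r-1)$ and $x(r-1)\equiv0\pmod{p^n}$, plus their mirror images, add up to $\alpha+\beta\ge 2n$ in the supposedly bad case. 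The arithmetic kernel is the same in both arguments (for odd $p$ the geometric sum $1+r+\cdots+r^{p^\alpha-1}$ has $p$-adic valuation exactly $\alpha$), but your version buys an exact count $|G|=p^{\,n+\alpha+\beta}$ read off the invariant factors of $G/G'$, avoiding both the induction and the case split, and it makes the sharpness of the bound (equality forcing $\alpha+\beta=n+1$) more transparent. One point worth making explicit in a final write-up: when both $\alpha,\beta<n$, the congruence $x(q-1)\equiv(r^{p^\alpha}-1)/(r-1)\pmod{p^n}$, whose right-hand side has valuation $\alpha<n$, already forces $q\not\equiv1$ and $x\not\equiv0\pmod{p^n}$, so that $v_p(q-1)=n-s_q$ and $v_p(x)$ are genuinely well defined there; as written this is assumed silently.
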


\begin{proof} Let $G$ be a finite $p$-group such that $A= G'$ is cyclic of order $p^n$, with $n\ge 1$. Now $G/C_G(A)$ is cyclic, so setting $C = C_G(A)$, there exists $y\in G$ such that $G = C\langle y\rangle$.

Suppose first that $C'= A$. Then, since $A$ is cyclic of prime power order, $A = \langle [a,b]\rangle$ for some $a,b\in C$, and $[a,b,b]=1$; hence
$1\ne [a,b]^{p^{n-1}} = [a,b^{p^{n-1}}]$,
and so $b^{p^{n-1}}$ does not belong to $C_C(a) \ge A\langle a\rangle > A$. 
So the elements $1,b,b^2,\ldots,b^{p^n-1}$ belong to distinct
cosets of $A\langle a\rangle$.
This yields $|C| \ge p^{n} |A\langle a\rangle| \ge p^{2n+1}$. 

We may then assume $C' < A$. Since $A = (C\langle y\rangle)' = C'[C,\langle y\rangle]$ is cyclic, we deduce that there exists $x\in C$ such that $A = \langle [x,y]\rangle$. Clearly, we may also suppose $G = \langle x, y \rangle$. In this setting we prove, by induction on $n$, that  $[x,y^{p^{n-1}}] \ne 1$. The case $[x,y,y]=1$ has already been proved above, and includes the case $n=1$. Thus, let $n\ge 2$ and $K = [A,y,y] = \langle [x,y,y,y]\rangle$; observe that $|K|\le p^{n-2}$, hence it is properly contained in $\langle [x,y]^p\rangle$. Now,
\[[x,y^p] = x^{y^p-1} = x^{(y-1)(1+ y + \cdots+  y^{p-1})}= [x,y]^{1+ y + \cdots+  y^{p-1}},\]
thus, by standard commutator calculus,
\[\begin{array}{rcl}
[x,y^p] &=&[x, y]^{p}[x, y]^{(y-1)+\left(y^{2}-1\right)+\cdots+\left(y^{p-1}-1\right)} \\ 
           &=&[x, y]^{p}[x, y, y]^{1+(1+y)+\left(1+y+y^{2}\right)+\cdots+\left(1+y+y^{2}+\cdots+y^{p-2}\right)} \\ 
           &\equiv&  [x,y]^p[x,y,y]^{1+ 2 + \cdots+ (p-1)} \pmod K\\
           &=&  [x,y]^p[x,y,y]^{\frac{p(p+1)}{2}},
\end{array}
\] (as $[x,y,y]^y\equiv [x,y,y] \pmod K$).
Since $p$ is odd and $A=\langle [x,y]\rangle$ is cyclic, we deduce 
\begin{equation}\label{eq00}
\langle [x,y^p] \rangle = \langle [x,y]^p\rangle.
\end{equation}
Now, $\langle [x,y^p] \rangle$ is the commutator subgroup of $\langle x,y^p \rangle$, which, by (\ref{eq00}) has order $p^{n-1}$. Therefore, by the inductive assumption, 
\begin{equation}\label{eq01}
[x,y^{p^{n-1}}] = [x,(y^p)^{p^{n-2}}] \ne 1,
\end{equation}
as wanted.

Now, since $C_G(x) \ge \langle A, x\rangle$,  (\ref{eq01}) implies $y^{p^{n-1}}\not\in \langle A, x\rangle$; therefore,
\[ |G| \ge p^n|\langle A, x\rangle| \ge p^{n+1}|A| = p^{2n+1}\]
thus completing the proof. \qed
\end{proof}

This allows to find an exact general bound for odd primes. 

\begin{theorem} For every positive integer $k\ge 1$, let $f_p(k)$ denote the smallest positive integer such that every abelian group of order $p^k$ has an integral of order $p^{f_p(k)}$. Then, if $p$ is an odd prime, $f_p(k) = 2k + 1$.
\label{p:int-p-gp}
\end{theorem}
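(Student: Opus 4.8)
The plan is to establish the two inequalities $f_p(k)\le 2k+1$ and $f_p(k)\ge 2k+1$ separately, each falling out of one of the two preceding lemmas. First I would unwind the definition. For a finite abelian $p$-group $A$, the set of exponents $m$ for which $A$ admits a $p$-integral of order $p^m$ is closed upwards: if $H'\cong A$ with $|H|=p^m$, then $(H\times C_p)'\cong A$ with $|H\times C_p|=p^{m+1}$. Hence $f_p(k)$ is just the maximum, over all abelian groups $A$ of order $p^k$, of $\log_p$ of the order of the smallest $p$-integral of $A$. So it suffices to bound this quantity above uniformly in $A$, and below for a single well-chosen $A$.

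For the upper bound I would apply Lemma~\ref{lemma00}. Write an arbitrary abelian group $A$ of order $p^k$ as $\langle x_1\rangle\times\cdots\times\langle x_d\rangle$ with $|x_i|=p^{n_i}$, $n_1\ge\cdots\ge n_d\ge1$ and $\sum_i n_i=k$; here $d=\rk(A)$ and the exponent is $p^{n}=p^{n_1}$. The lemma produces a $p$-group $G$ with $G'\cong A$ and $|G|=|A|\,p^{n+d}=p^{k+n+d}$. The key arithmetic point is that $k=\sum_{i=1}^d n_i\ge n_1+(d-1)=n+d-1$, so $n+d\le k+1$, whence $|G|\le p^{2k+1}$. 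This shows every abelian group of order $p^k$ has a $p$-integral of order at most $p^{2k+1}$, giving $f_p(k)\le 2k+1$.

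For the lower bound I would take $A$ to be cyclic of order $p^k$, which is precisely the case in which the estimate above is tight ($d=1$, $n=k$). Any $p$-integral $G$ of $A$ satisfies $G'=A$, cyclic of order $p^k$, and is non-abelian since $G'\ne1$. Hence Lemma~\ref{lemma01} applies and yields $|G|\ge p^{2k+1}$. Thus the cyclic group of order $p^k$ admits no $p$-integral of order smaller than $p^{2k+1}$, forcing $f_p(k)\ge 2k+1$ and, combined with the previous paragraph, completing the proof.

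Since both lemmas are already in hand, the substantive work is entirely contained in them and no serious obstacle remains: the argument reduces to the combinatorial inequality $n+d\le k+1$ together with its attainment by the cyclic group. The only points demanding care are to confirm that the hypotheses of Lemma~\ref{lemma01} (odd $p$, non-abelian $G$, cyclic derived group) are genuinely met by \emph{every} $p$-integral of $C_{p^k}$, and to note that restricting to $p$-group integrals throughout is legitimate because $f_p$ concerns integrals of $p$-power order.
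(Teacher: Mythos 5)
Your proof is correct and follows essentially the same route as the paper: the upper bound comes from Lemma~\ref{lemma00} together with the inequality $n+d\le k+1$, and the lower bound comes from applying Lemma~\ref{lemma01} to the cyclic group of order $p^k$, which is exactly how the paper establishes sharpness. Your extra remarks (upward closure of admissible exponents, verification of the hypotheses of Lemma~\ref{lemma01}) only make explicit what the paper leaves implicit.
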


\begin{proof} Let $k\ge 1$ and let $A$ be an abelian group of order $p^k$. If $d$ is the rank of $A$ and $p^n$  its exponent, then $p^k\ge p^{n+d-1}$. By Lemma \ref{lemma00} there exists an integral $G$ of $A$ such that
\[ |G| = |A|p^{n+d} \le p^{2k+1}.\]
This bound is sharp by Lemma \ref{lemma01}. \qed
\end{proof}

For $p=2$, Lemma \ref{lemma01} fails, as seen by consideration of dihedral
$2$-groups, and we do not have yet the exact value of $f_2(k)$ (see also
\cite{accm}).

\begin{remark} It is clear that
a finite $p$-group (for $p$ a prime number) has a nilpotent integral if and only if it has a finite integral that is a $p$-group. However, not every integrable $p$-group has a nilpotent integral; the quaternion group of order $8$ being the smallest example of an integrable nilpotent group that does not have a nilpotent integral. 
Indeed, by Theorem~\ref{t:not-p-int}, no non-abelian nilpotent group with cyclic centre has a nilpotent integral.
\end{remark}

\subsection{Finitely integrable abelian groups}

This section deals with Problem 10.8 of \cite{accm}. We say that a group $A$ is {\it finitely integrable} if there exists a group $G$ such that $A\cong G'$ and $|G:G'|$ is finite.

 Not every abelian group is finitely integrable: it is proved in \cite{accm} that an infinite direct sum of cyclic $2$-groups with pairwise distinct orders is not finitely integrable.  On the other hand, we have the following simple fact.
 \begin{prop} 
 \label{thm:abelian-finitely-integrable}
 For every abelian group $A$, the direct sum $A\times A$ is finitely integrable.
 \end{prop}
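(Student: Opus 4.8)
The plan is to find a group $G$ with $G' \cong A \times A$ and $|G:G'|$ finite. The natural idea is to realize $A \times A$ as the derived group of a group built from $A$ together with a finite-order automorphism that mixes the two copies of $A$. The key observation is that in a semidirect product $G = N \rtimes \langle\alpha\rangle$ with $N$ abelian and $\alpha$ an automorphism of finite order, the derived group is $G' = [N,\alpha] = \{n^{-1}n^\alpha : n \in N\}$ (the image of the map $n \mapsto n^{-1}n^\alpha$), and $|G:G'|$ will be finite provided $|N:[N,\alpha]|$ and $|\langle\alpha\rangle|$ are both finite.

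First I would take $N = A \times A \times A$ (three copies, written additively) and let $\alpha$ be the cyclic permutation automorphism sending $(a,b,c) \mapsto (c,a,b)$, which has order dividing $3$. For $N$ written additively, $[N,\alpha]$ is the image of the endomorphism $\beta : x \mapsto \alpha(x) - x$, so $[N,\alpha] = \{(c-a,\,a-b,\,b-c) : a,b,c \in A\}$. This is exactly the ``augmentation'' subgroup, namely the kernel of the summation map $N \to A$, $(x,y,z) \mapsto x+y+z$: indeed the three coordinates of any element of $[N,\alpha]$ sum to zero, and conversely any triple summing to zero is of this form (given $(u,v,w)$ with $u+v+w=0$, solve $c-a=u$, $a-b=v$ by setting $a=0$, $b=-v$, $c=u$). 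Hence $[N,\alpha] \cong \ker(\Sigma)$, and since the summation map is surjective, $N/[N,\alpha] \cong A$. More importantly, $\ker(\Sigma) \cong A \times A$ via the projection onto the first two coordinates (the third being determined). So setting $G = N \rtimes \langle\alpha\rangle$ gives $G' \cong A \times A$ with $|G:G'| = |A| \cdot 3 / 1$...

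Let me reconsider the index: $|G:G'| = |G|/|G'| = (|N|\cdot|\alpha|)/|[N,\alpha]| = |\alpha| \cdot |N:[N,\alpha]| = 3 \cdot |A|$, which is finite only when $A$ is finite. This is the main obstacle: the permutation-action construction makes $[N,\alpha]$ have infinite index when $A$ is infinite. I would therefore switch to $N = A \times A$ with an automorphism $\alpha$ whose commutator subgroup $[N,\alpha]$ is all of $N$. The cleanest choice is $\alpha(a,b) = (b, a)$ composed with a sign, i.e.\ realize $A \times A$ directly as $[N,\alpha]$ for a suitable $N$ and order-$2$ (or order-$3$) automorphism $\alpha$ where $\alpha - 1$ is \emph{surjective} on $N$. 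For instance, on $N = A \times A$ the map $\alpha:(a,b)\mapsto(-b,\,a-b)$ has order $6$ (it satisfies $\alpha^2 - \alpha + 1 = 0$, acting as multiplication by a primitive sixth root of unity), and $\alpha - 1$ is then invertible since its ``determinant'' is $1 - 1 + 1 = 1$, forcing $[N,\alpha] = N = A\times A$ while $|G:G'| = |\langle\alpha\rangle| = 6$ is finite regardless of $A$. Writing down $\alpha$ explicitly and verifying it is an automorphism of finite order with $\alpha - 1$ bijective is the crux; everything else is the routine identity $G' = [N,\alpha]$ for abelian $N$. The hard part is thus exhibiting a single finite-order automorphism of $A \times A$ acting without nonzero fixed points for $\alpha - 1$, and I would phrase this conceptually by viewing $A \times A$ as a module over $\mathbb{Z}[\zeta]$ for a root of unity $\zeta$ with $\zeta - 1$ a unit (e.g.\ $\zeta$ a primitive $6$th root, where $1-\zeta$ is a unit in $\mathbb{Z}[\zeta]$).
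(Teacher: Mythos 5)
Your strategy is exactly the paper's: take $G=(A\times A)\rtimes\langle\alpha\rangle$ for a finite-order automorphism $\alpha$ with $[A\times A,\alpha]=A\times A$, which the conceptual framing via $\mathbb{Z}[\zeta_6]$ (where $1-\zeta_6$ is a unit) correctly justifies. However, the explicit automorphism you exhibit at the crux does not have the properties you claim, and this is a genuine gap since you yourself identify writing down such an $\alpha$ as the hard part. The map $\alpha(a,b)=(-b,\,a-b)$ has matrix $\left(\begin{smallmatrix}0&-1\\1&-1\end{smallmatrix}\right)$, whose characteristic polynomial is $\lambda^2+\lambda+1$, not $\lambda^2-\lambda+1$: it is a primitive \emph{cube} root of unity, of order $3$, and $\det(\alpha-1)=1+1+1=3$, not $1$. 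Consequently $\alpha-1$ is not surjective on $A\times A$ when $A\neq 3A$; for example, if $A$ is an infinite elementary abelian $3$-group, the image of $\alpha-1$ is $\{(2c,c):c\in A\}$, of infinite index, so the construction fails outright there. (This is precisely why the paper's Remark following the proposition restricts the order-$3$ automorphism $(x,y)\mapsto(y,-x-y)$ to groups with no $3$-torsion.)

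The fix is a sign: you want trace $+1$, e.g.\ $\alpha(a,b)=(-b,\,a+b)$ or the paper's $\alpha(x,y)=(y,\,y-x)$, whose matrix satisfies $\lambda^2-\lambda+1=0$, has order $6$ (indeed $\alpha^3=-1$), and has $\det(\alpha-1)=1$, so $\alpha-1$ is invertible over $\mathbb{Z}$ and hence bijective on $A\times A$ for every abelian $A$. With that correction, the rest of your argument (that $G'=[N,\alpha]$ for abelian $N$ and cyclic $\langle\alpha\rangle$, and that $|G:G'|\le 6$) goes through and coincides with the paper's proof. Your preliminary three-copies-with-cyclic-shift attempt is correctly diagnosed as failing for infinite $A$ and is harmless.
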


 \begin{proof} Consider the automorphism $\alpha$ of $A\times A$ defined by
 \[(x,y) \mapsto (y, y-x)\]
 for every $(x,y)\in A\times A$. Then the order of $\alpha$ divides $6$. Moreover, for each $(x,y)\in A\times A$, $[(x,y), \alpha] = (y-x, -x)$, hence $[A\times A, \alpha] = A\times A$. Letting $G = (A\times A)\sdir\langle\alpha\rangle$, we have that $G'= A\times A$ has finite index in $G$. \qed
 \end{proof}

\begin{remark}
If $A$ contains no elements of order~$3$, we can use the
automorphism $(x,y)\mapsto(y,-x-y)$, with order $3$, instead.
\label{order3}
\end{remark}

 \begin{cor} Every free abelian group is finitely integrable.
 \end{cor}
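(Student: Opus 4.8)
The plan is to derive the corollary from Proposition~\ref{thm:abelian-finitely-integrable}, which already shows that $A\times A$ is finitely integrable for every abelian $A$. The one extra ingredient I need is that a finite direct product of finitely integrable groups is finitely integrable: if $G_i'\cong F_i$ with $[G_i:G_i']$ finite for $i=1,\dots,m$, then $(G_1\times\cdots\times G_m)'=G_1'\times\cdots\times G_m'\cong F_1\times\cdots\times F_m$ and the index $\prod_i[G_i:G_i']$ is again finite. I would record this as a one-line observation first.

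Next I would split a free abelian group $F$ according to its rank $\kappa$. If $\kappa$ is infinite, then $\kappa+\kappa=\kappa$, so $F\cong A\oplus A$ with $A$ free abelian of rank $\kappa$; if $\kappa$ is finite and even, then $F\cong A\oplus A$ with $A$ free abelian of rank $\kappa/2$. In either case Proposition~\ref{thm:abelian-finitely-integrable} applies verbatim and finishes the argument. (Since $F$ is torsion-free it has no element of order $3$, so Remark~\ref{order3} even permits the automorphism of order $3$ there.)

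The remaining, and only delicate, case is when $\kappa$ is finite and odd, say $\kappa=2k+1$: here one cannot halve the rank, so the Proposition does not apply directly. I would write $F\cong\mathbb{Z}\oplus(A\oplus A)$ with $A=\mathbb{Z}^k$, note that $A\oplus A$ is finitely integrable by the Proposition, and supply a separate witness that $\mathbb{Z}$ itself is finitely integrable. For this the infinite dihedral group $D_\infty=\langle a,b\mid b^2=1,\ b^{-1}ab=a^{-1}\rangle$ works: a direct computation gives $[a,b]=a^{-2}$, so $D_\infty'=\langle a^2\rangle\cong\mathbb{Z}$, while $D_\infty/\langle a^2\rangle\cong C_2\times C_2$ has order $4$, so the index is finite. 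Combining the two factors through the direct-product observation shows that $F$ is finitely integrable.

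The main obstacle is precisely the odd finite rank case: the halving device underlying $A\times A$ breaks down, so the result is not a pure corollary of the Proposition and requires the small extra input that $\mathbb{Z}$ admits a finite-index integral, together with closure of finite integrability under direct products. The infinite and even-rank cases, by contrast, are immediate once one observes that $\kappa+\kappa=\kappa$ for infinite $\kappa$.
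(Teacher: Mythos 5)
Your proof is correct, and it correctly isolates the one point where the corollary does not follow verbatim from Proposition~\ref{thm:abelian-finitely-integrable}: a free abelian group of finite odd rank is not of the form $A\times A$, so the halving trick alone cannot close the argument. The paper states the corollary without proof, evidently intending the reduction you use in the infinite-rank case ($\kappa+\kappa=\kappa$, hence $F\cong A\oplus A$); your patch for the remaining case --- closure of finite integrability under finite direct products, together with the witness $D_\infty'=\langle a^2\rangle\cong\mathbb{Z}$ of index $4$ --- is a sound and self-contained supplement. An alternative, closer in spirit to what the paper does later, handles all finite ranks at once: for $F=\mathbb{Z}^n$ the generalized dihedral group $H=F\rtimes\langle\lambda\rangle$, with $\lambda$ the inversion automorphism, satisfies $H'=[F,\lambda]=F^2\cong F$ of index $2^{n+1}$; this is exactly the $\alpha=\mathrm{id}$ instance of Theorem~\ref{lemmatf2} and is recorded as Corollary~\ref{cortf1}. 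Both routes work; yours has the merit of using only material available at the point where the corollary is stated, at the cost of the extra (but easy) product-closure observation.
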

 
\subsubsection{Periodic groups} 

In this subsection we consider periodic abelian groups, aiming at a description of the finitely integrable ones. Clearly, if $A$ is a periodic abelian group with no elements of order $2$, then $A$ is finitely integrable via the inversion automorphism; thus, the question reduces to characterizing abelian $2$-groups that are finitely integrable.

Another immediate reduction is to {\it reduced} groups. An abelian $p$-group $A$ is divisible if and only if $A^p = A$, and it is reduced if it contains no non-trivial divisible subgroup. Any abelian $p$-group $A$ has a unique (hence characteristic) maximal divisible subgroup $D$, and $A = D\times B$ with $B$ reduced. 
For $x\in D$, choose $y\in D$ with $y^2=x$; then $[y^{-1},\alpha]=x$, where 
$\alpha$ is the inversion automorphism; so $[D,\alpha]=D$.
It follows that $A$ is finitely integrable if and only if the reduced group 
$B\cong A/D$ is finitely integrable.

So it is enough to consider reduced $2$-groups.

\medskip

We need the following lemma on reduced $p$-groups (only in the case $p=2$).

\begin{lemma}
Let $A$ be a reduced abelian $p$-group, and suppose that $A/A^p$ is finite.
Then $A$ is finite.
\label{l:finite}
\end{lemma}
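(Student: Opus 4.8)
The plan is to prove the contrapositive structural fact directly: a reduced abelian $p$-group $A$ with $A/A^p$ finite must be finite. The key invariant controlling the size of a reduced abelian $p$-group is the collection of Ulm invariants, but for this statement I expect a more elementary approach suffices, exploiting only the hypothesis on $A/A^p$ together with reducedness. First I would set $d=\dim_{\mathbb{F}_p}(A/A^p)$, which by hypothesis is finite; note that $A/A^p\cong A[p]$ (the socle) as $\mathbb{F}_p$-vector spaces when $A$ is, say, bounded, but in general I must be careful since $A/A^p$ and the socle $A[p]$ need not coincide for unbounded groups. The cleaner route is to observe that $d$ bounds the rank of $A$ in the sense that $A$ cannot contain a direct sum of more than $d$ nontrivial cyclic summands: any decomposition $A=\bigoplus_{i} C_i$ into cyclic groups would give $A/A^p=\bigoplus_i C_i/C_i^p$, each summand of dimension $1$, so there are at most $d$ summands.

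The main work is then to reduce to such a decomposition and to rule out infinite cyclic-order summands. I would use the structure theory of reduced abelian $p$-groups: a reduced $p$-group need not decompose as a direct sum of cyclics in general (e.g. unbounded reduced groups can be subtle), so I cannot invoke that freely. Instead, the plan is to show $A$ is bounded, i.e.\ $A^{p^n}=1$ for some $n$; once $A$ is bounded, Pr\"ufer's theorem guarantees $A$ is a direct sum of cyclic groups, the finiteness of $A/A^p$ bounds the number of summands by $d$, and boundedness bounds each summand's order, giving $|A|$ finite. So the crux reduces to: a reduced $p$-group with $A/A^p$ finite is bounded.

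To establish boundedness I would argue by contradiction. Consider the descending chain $A\supseteq A^p\supseteq A^{p^2}\supseteq\cdots$ with intersection $\bigcap_n A^{p^n}$, which is the maximal divisible subgroup and hence trivial since $A$ is reduced. The finiteness of $A/A^p$ should force the chain to stabilize or, combined with reducedness, terminate at $1$. The delicate point is that $A/A^p$ finite does \emph{not} immediately bound $A/A^{p^2}$ unless one knows the relevant quotient maps have controlled kernels; I would show that $A^p/A^{p^2}$ is a homomorphic image of (or injects into, via the $p$-th power map on $A/A^p$) the finite group $A/A^p$, so each successive layer $A^{p^k}/A^{p^{k+1}}$ has $\mathbb{F}_p$-dimension at most $d$, and these dimensions are nonincreasing. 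If the chain never reaches $1$ then it must stabilize at some nontrivial $A^{p^N}=A^{p^{N+1}}=\cdots$, a nontrivial divisible subgroup, contradicting reducedness. Hence the chain terminates, $A$ is bounded, and finiteness follows.

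The step I expect to be the main obstacle is precisely the control of the successive layers $A^{p^k}/A^{p^{k+1}}$: making rigorous that finiteness of the top layer $A/A^p$ propagates down the chain, and that stabilization of the chain at a nonzero term yields a divisible (hence forbidden) subgroup. The natural tool is the $p$-power endomorphism, which induces surjections $A/A^p \twoheadrightarrow A^p/A^{p^2} \twoheadrightarrow \cdots$, so each layer is a quotient of the previous finite one and the chain of layers is eventually constant; if that constant value is nonzero the stable term is divisible, contradicting that $A$ is reduced. Pinning down this propagation argument cleanly, without invoking heavier structure theory than needed, is where care is required.
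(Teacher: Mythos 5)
Your setup is the same as the paper's: the $p$-th power map induces surjections $A/A^p\twoheadrightarrow A^p/A^{p^2}\twoheadrightarrow\cdots$, so the layers $A^{p^k}/A^{p^{k+1}}$ have nonincreasing, eventually constant order. But there is a genuine gap at the decisive step. You assert that if the chain $A\supseteq A^p\supseteq A^{p^2}\supseteq\cdots$ "never reaches $1$ then it must stabilize at some nontrivial $A^{p^N}=A^{p^{N+1}}=\cdots$". That is exactly what need not happen: if the eventually constant layer dimension is \emph{positive}, the chain is strictly decreasing forever and there is no stable term at any finite stage. Your fallback, that the intersection $\bigcap_n A^{p^n}$ is the maximal divisible subgroup and hence trivial by reducedness, is false in general: the first Ulm subgroup $A^{p^\omega}$ of a reduced $p$-group need not be divisible and need not vanish (the paper's own example $H_\ast$ in the Ulm--Kaplansky subsection is reduced with $H_\ast^{2^\omega}\neq 1$). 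So your argument never rules out the case of a strictly decreasing chain with constant positive layer dimension, and the claim "if that constant value is nonzero the stable term is divisible" has no referent.

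The paper closes this case differently, and this is the idea you are missing: once the layer maps $A^{p^n}/A^{p^{n+1}}\to A^{p^{n+1}}/A^{p^{n+2}}$ are isomorphisms for $n\ge m$, any $x\in A^{p^m}\setminus A^{p^{m+1}}$ has $x^{p^k}\notin A^{p^{m+k+1}}$ for all $k$, hence $x^{p^k}\neq 1$ for all $k$, so $x$ has infinite order --- contradicting that $A$ is a $p$-group (torsion). This forces the stable layer dimension to be zero, whence $A^{p^m}=A^{p^{m+1}}$ is genuinely divisible and therefore trivial, and finiteness follows from $|A|\le |A:A^p|^{m}$ without any appeal to Pr\"ufer's theorem. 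Your reduction to boundedness plus Pr\"ufer is fine in itself but unnecessary; the real obstacle is the torsion argument above, which your proposal does not supply.
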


\begin{proof}
Let $\sigma$ be the $p$th power map on $A$. Then $\sigma$
induces maps (which we also denote $\sigma$) as follows:
\[A/A^p\to A^p/A^{p^2}\to\cdots\to A^{p^m}/A^{p^{m+1}}\to\cdots\]
All these maps are surjective homomorphisms. Since $A/A^p$ is finite, there
exists $m$ such that, for all $n\ge m$, the map
$\sigma:A^{p^n}/A^{p^{n+1}}\to A^{p^{n+1}}/A^{p^{n+2}}$ is an isomorphism.

Suppose first that $A^{p^m}>A^{p^{m+1}}$, and choose an element
$x\in A^{p^m}\setminus A^{p^{m+1}}$. Then successively applying $\sigma$ to $x$
any number $n-m$ of times gives an element which is non-trivial modulo
$A^{p^{n+1}}$, and hence non-trivial; so $x$ has infinite order, a
contradiction.

So we must have
\[A^{p^m}=A^{p^{m+1}}=A^{p^{m+2}}=\cdots.\]
But then the subgroup $A^{p^m}$ is divisible. Since $A$ is assumed reduced,
this implies that $A^{p^m}=1$. But
\[|A|=|A:A^{p^{m}}|\le |A/A^p|\cdot|A^p:A^{p^2}|\cdots|A^{p^{m-1}}:A^{p^m}|
\le|A:A^p|^{m-1},\]
so $A$ is finite, as required. \qed
\end{proof}

If $G$ is an abelian $p$-group, and $n$ a non-negative integer, we set
\[ G[p^n] = \{x\in G\mid x^{p^n} = 1\}\qquad {\rm and} \qquad G^{p^n} = \{ x^{p^n}\mid x\in G\}.\]
These are characteristic subgroups of $G$ and $G/G[p^n] \cong G^{p^n}$. We also write $G^{p^{\omega}} =\bigcap_{n\ge 0}G^{p^n}$. 

\medskip
The simplest reduced groups are direct sums of cyclic $p$-groups. The following is essentially proved in \cite{accm}.

\begin{lemma}\label{lemmaA1} Let $A$ be a homocyclic $2$-group of rank $k \ge 2$: that is
 $A = \Dir_{i\in I}H_i$, 
with $H_i \cong C_{2^n}$ for some fixed $n\ge 1$, and $|I| = k \ge 2$. Then $A$ has an integral of type $A\sdir Q$, with $|Q|$ dividing $21$.
\end{lemma}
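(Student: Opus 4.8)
The plan is to realize $A$ as a free module of rank $k$ over the ring $R=\Z/2^n\Z$, so that $\Aut(A)=\GL_k(R)$, and to build the integral as a split extension $G=A\sdir Q$ with $Q=\langle q\rangle$ cyclic of order dividing $21$. For such an extension with $A$ abelian and $Q$ cyclic one has $G'=[A,Q]\,Q'=[A,Q]$, since $Q'=1$; and writing $A$ additively and $q$ for the corresponding automorphism, the commutators $[a,q]=q(a)-a$ run over the image of $q-1$, so $[A,Q]=(q-1)A$. Hence the whole problem reduces to finding $q\in\GL_k(R)$ of order dividing $21$ for which $q-1$ is invertible: then $(q-1)A=A$, giving $G'=A$ with $|Q|=|q|$ dividing $21$.

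Next I would reduce invertibility over $R$ to invertibility over the field $\GF(2)$. Indeed $q-1$ is invertible in $M_k(R)$ iff $\det(q-1)$ is a unit of $R$, i.e.\ iff $\det(q-1)$ is odd, which holds iff the reduction $\bar q-1$ is invertible over $\GF(2)$. So it suffices to produce $\bar q\in\GL_k(\GF(2))$ of order dividing $21$ with no nonzero fixed vector, and then lift it.

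To construct $\bar q$, I would decompose the space into blocks. Every integer $k\ge 2$ can be written as $k=2a+3b$ with $a,b\ge 0$ (take $(k/2,0)$ if $k$ is even and $((k-3)/2,1)$ if $k$ is odd); accordingly split $A$ into $a$ homocyclic blocks of rank $2$ and $b$ of rank $3$. On a rank-$2$ block act by a Singer cycle of $\GL(2,2)\cong S_3$, an element of order $3=2^2-1$ acting irreducibly; on a rank-$3$ block act by a Singer cycle of order $7=2^3-1$, again irreducible. For a Singer cycle acting by field multiplication on $\GF(2^m)$, every nonidentity element $c$ has $\ker(c-1)=0$, since $c-1$ is a nonzero element of a field and so acts invertibly; thus $\bar q-1$ is invertible on each block, hence overall, and $|\bar q|$ is the least common multiple of the block orders, lying in $\{3,7,21\}$.

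Finally I would lift $\bar q$ to $q\in\GL_k(R)$ of the same order; this is the one genuinely technical point, and I expect it to be the main obstacle, though it is routine. Since $\ker\bigl(\GL_k(R)\to\GL_k(\GF(2))\bigr)$ is a $2$-group while $|\bar q|$ is odd, any preimage $\tilde q$ has order $|\bar q|\cdot 2^t$ for some $t\ge 0$, and $q:=\tilde q^{2^t}$ has order $|\bar q|$, reducing to $\bar q^{2^t}$, still a generator of $\langle\bar q\rangle$ (and hence still a fixed-point-free power of the block Singer cycles). Alternatively one may take $q$ to be, block-wise, the Teichmüller unit of order $2^m-1$ in the Galois ring $\mathrm{GR}(2^n,m)=R[x]/(f)$, with $f$ a Hensel lift of a primitive polynomial of degree $m\in\{2,3\}$, acting by multiplication. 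Either way $q-1$ is automatically invertible over $R$, because $\bar q-1$ is invertible over $\GF(2)$. Setting $Q=\langle q\rangle$ and $G=A\sdir Q$ then yields $G'=(q-1)A=A$ with $|Q|$ dividing $21$, as required.
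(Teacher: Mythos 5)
Your argument is correct for finite $k$, and it is a genuinely different (and more self-contained) route than the paper's: the paper simply cites the construction at the start of Section 4 of \cite{accm} for the finite case, whereas you reconstruct that construction from scratch. Your reduction $G'=[A,Q]=(q-1)A$ for a split extension by a cyclic group, the passage from invertibility of $q-1$ over $\mathbb{Z}/2^n\mathbb{Z}$ to invertibility of $\bar q-1$ over $\GF(2)$, the decomposition $k=2a+3b$ with fixed-point-free Singer elements of orders $3=2^2-1$ and $7=2^3-1$ on the blocks, and the odd-order lift $q=\tilde q^{2^t}$ are all sound; this also explains transparently where the constant $21=3\cdot 7$ comes from, which the paper leaves implicit. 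Note that the rank-$2$ lift you need is available completely explicitly: it is the order-$3$ automorphism $(x,y)\mapsto(y,-x-y)$ of Remark~\ref{order3}, which the paper itself invokes, so the Galois-ring or power-of-a-preimage machinery, while correct, is heavier than necessary.

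There is, however, one genuine omission. The lemma allows $|I|=k$ to be an arbitrary cardinal $\ge 2$, and the paper's proof devotes most of its effort to the case where $k$ is infinite; your step ``every integer $k\ge 2$ can be written as $k=2a+3b$'' silently restricts to finite rank. The repair is immediate from your own blockwise construction: if $k$ is infinite, partition $I$ into pairs (possible since $|I|=|I\times\{0,1\}|$), write $A=\Dir_{j}(H_j\times H_{j'})$, and act on each pair by the order-$3$ fixed-point-free automorphism above; this gives $[A,\alpha]=A$ with $|Q|=3$. You should add this case to make the proof cover the statement as written.
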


\begin{proof} If $k$ is finite, this is done in  \cite{accm}, in the
construction at the start of Section 4 (p.~159) (see also Remark~\ref{order3}
above); indeed, the construction shows that, if $k$ is even, we may take
$|Q|=3$ (see below).

If $k$ is an infinite cardinal, we may find a partition $I = J \cup J'$ with $|J|= |J'|= k$. If $j\mapsto j'$ is a bijection from $J$ to $J'$, then $A = \Dir_{j\in J}(H_j\times H_{j'})$ and there is an automorphism $\alpha$ of order $3$ of $A$,
(again by Remark~\ref{order3}),
fixing every $H_j\times H_{j'}$ and such that $[H_j\times H_{j'}, \alpha] = H_j\times H_{j'}$, so that $A$ is the derived subgroup of $A\sdir\langle\alpha\rangle$. \qed
\end{proof}

\subsubsection{Ulm--Kaplansky invariants}

In this subsection, we suggest another approach to the question of which abelian
$2$-groups are finitely integrable, using the concept of
Ulm--Kaplansky invariants.

Let $p$ be a prime number and $A$ an abelian $p$-group. If $\sigma$ is an ordinal we define $A^{p^{\sigma +1}} = (A^{p^{\sigma}})^p$, while for a limit ordinal $\sigma$, we set $A^{p^{\sigma}} = \bigcap_{\lambda< \sigma} A^{p^{\lambda}}$. Thus, for example
\[ A^{p^{\omega}} = \bigcap_{n\in \N}A^{p^n}.\]
If $A$ is reduced then there exists a smallest ordinal $\tau$, that we call the {\it height} of $A$, such that $A^{p^{\tau}} = 1$.
 
 Let $A$ be a reduced abelian $2$-group of height $\tau$; then for every ordinal $\sigma< \tau$ we define the Ulm section
 \[ U_{\sigma}(A)=\frac{A^{2^{\sigma}}\cap A[2]}{A^{2^{\sigma+ 1}}\cap A[2]}.\]
Clearly, $U_{\sigma}(A)$ is a characteristic section of $A$, and is an
elementary abelian $2$-group. The cardinal number
  $ f_{\sigma}(A) = \rk (U_{\sigma}(A))$
 is called the {\it $\sigma$-th Ulm--Kaplansky invariant} of $A$.

For the next result, we remind the reader of a result about coprime action: if
$H$ is a $p'$-group of automorphisms of a finite abelian $p$-group $G$, then
$G=C_G(H)\times[G,H]$ (see, for example, \cite[Theorem 5.2.3]{Gor}).

 \begin{theorem}\label{propo2} Let $A$ be a reduced $2$-group which is finitely integrable. Then
 \begin{enumerate}
 \item\label{(1)}  only finitely many Ulm--Kaplansky invariants of $A$ are equal to $1$;
 \item\label{(2)} if $A$ has height $\tau > \omega$ then $f_{\sigma}(A)\ne 1$ for every ordinal $\omega \le \sigma < \tau$. 
 \end{enumerate}
 \end{theorem}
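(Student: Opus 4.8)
The plan is to fix a witnessing integral, translate the condition $f_\sigma(A)=1$ into a statement about the action of the finite top group, and then trap the ``trivial--action'' part of the socle between a finiteness statement coming from the integral and the lower bounds of Corollary~\ref{corol-red} and Lemma~\ref{lemma2-red}. So I would choose $G$ with $G'\cong A$ and $|G:G'|$ finite, set $H:=G/A$, and view $A$ as a $\Z H$-module via conjugation (the action of $A$ on itself is trivial, as $A$ is abelian). The engine of the whole argument is the finiteness statement \emph{$A/[A,H]$ is finite}: since $[A,H]=[G,G']=\gamma_3(G)$, the quotient $G/\gamma_3(G)$ is nilpotent of class at most $2$ with central derived subgroup $A/[A,H]$ and finite quotient $H$, so the (bi-additive) commutator map exhibits $A/[A,H]$ as a finitely generated abelian group; being a $2$-group, it is finite. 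The second ingredient is the translation of the hypothesis: writing $W_\sigma=A^{2^\sigma}\cap A[2]$, the Ulm section $U_\sigma=W_\sigma/W_{\sigma+1}$ is a characteristic, hence $H$-invariant, $\mathbb F_2$-section, and $f_\sigma=1$ forces $U_\sigma\cong C_2$; since $\Aut(C_2)=1$, $H$ then acts trivially on $U_\sigma$, i.e. $[W_\sigma,H]\le W_{\sigma+1}$. Thus each $\sigma$ with $f_\sigma=1$ produces a one-dimensional trivial $H$-subquotient of the socle $A[2]$ sitting at height exactly $\sigma$.

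For the odd part of the action, coprime action finishes everything cleanly. If $|H|$ is odd then multiplication by $|H|$ is an automorphism of the $2$-group $A$, so $A=C_A(H)\oplus[A,H]$ and $C_A(H)\cong A/[A,H]$ is finite; moreover $\mathbb F_2H$ is semisimple, so the trivial $H$-subquotient forced by $f_\sigma=1$ lifts to a genuine fixed socle element $\tilde x_\sigma\in C_A(H)[2]$ of height exactly $\sigma$. Elements of distinct heights are $\mathbb F_2$-independent, so the number of such $\sigma$ is at most $\dim_{\mathbb F_2}C_A(H)[2]<\infty$, and they all lie in the finite group $C_A(H)$, hence at finite heights. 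This already proves both \eqref{(1)} and \eqref{(2)} whenever $H$ has odd order.

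For the genuinely $2$-modular behaviour I would first remove the reference to $\omega$ in \eqref{(2)}. The height shift $(A^{2^\omega})^{2^\lambda}=A^{2^{\omega+\lambda}}$ gives $f_\lambda(A^{2^\omega})=f_{\omega+\lambda}(A)$, so \eqref{(2)} is equivalent to the assertion that the reduced group $B:=A^{2^\omega}$ (nontrivial exactly when $\tau>\omega$) has \emph{no} Ulm invariant equal to $1$. A single infinite-height socle element generating a $C_2$ Ulm factor would, through the infinite divisibility available in $B$ together with the finiteness coming from the integral, manufacture arbitrarily large finite elementary abelian sections that are derived subgroups of $2$-groups with a top of bounded order, contradicting Corollary~\ref{corol-red} and, quantitatively, Lemma~\ref{lemma2-red}; the same overflow rules out infinitely many finite-height Ulm factors equal to $1$ (which is precisely what fails for $\Dir_n C_{2^n}$), yielding \eqref{(1)}.

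The hard part will be combining the two regimes when $H$ is an \emph{arbitrary} finite group. The coprime argument is clean exactly because $H^1$ and $H_1$ vanish, so fixed subquotients lift and $A/[A,H]$ computes $C_A(H)$; for the $2$-part neither holds, and worse, $(h-1)$ can raise heights, so the one-dimensional trivial factors need not be independent modulo $[A,H]$ and the finiteness of $A/[A,H]$ does not immediately descend to $A[2]$ or to $B=A^{2^\omega}$. I expect the resolution to run the socle analysis through finite $H$-invariant sections of $A$, applying the coprime splitting to the odd-order automorphisms on each such finite section and the $2$-group estimates of Corollary~\ref{corol-red}, Lemma~\ref{lemma2-red} and Lemma~\ref{l:finite} to the residual unipotent action, and then passing to a limit. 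Making this interaction precise, especially without assuming $H$ solvable, is the delicate point on which the proof turns.
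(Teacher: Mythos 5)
Your proposal is correct and essentially complete only in the special case where the top group $G/A$ has odd order; in the general case you explicitly leave the argument open, and this is where the real content of the theorem lies. The missing idea is simpler than the ``two regimes'' interaction you anticipate. Since $A=G'$, the top group $G/A$ is \emph{abelian} (so your worry about $H$ not being solvable is moot), hence it splits canonically as $G/A=H/A\times Q/A$ with $H/A$ the Sylow $2$-subgroup and $Q/A$ of odd order. The paper then never analyses the action of the $2$-part on the Ulm sections at all. Instead, $[A,Q]$ is normal in $G$ and $(H/[A,Q])'=A/[A,Q]$, so $C:=C_A(Q)\cong A/[A,Q]$ is the derived subgroup of a $2$-group with finite top, i.e.\ it is finitely $2$-integrable; Corollary~\ref{corol-red} then forces $C/C^2$ to be finite, and Lemma~\ref{l:finite} (which you cite but do not actually deploy) upgrades this to $C$ finite because $C$ is reduced. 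Once $C$ is finite, your own observation that a cyclic Ulm section is centralized by everything, combined with complete reducibility of the $Q$-action on sections of $A[2]$, bounds the number of $\sigma$ with $f_\sigma(A)=1$ by the number of drops in the finite chain $C\cap A^{2^\sigma}\cap A[2]$, and part~(\ref{(2)}) follows from $A^{2^\omega}=C^{2^\omega}\times[A,Q]^{2^\omega}$ together with the fact that a finite reduced group has trivial $2^\omega$-th power.

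A second, more local, problem: your ``engine'' --- finiteness of $A/[A,G]=A/\gamma_3(G)$ via the bi-additive commutator map --- is a true statement, but it is not the finiteness statement the proof needs. What is needed is finiteness of $A/[A,Q]\cong C_A(Q)$ for the odd part $Q$ alone, and since $[A,Q]\le[A,G]$ the quotient $A/[A,Q]$ may a priori be much larger than $A/[A,G]$; its finiteness genuinely requires the detour through Corollary~\ref{corol-red} and Lemma~\ref{l:finite} described above. Relatedly, your proposed reduction of part~(\ref{(2)}) to the group $B=A^{2^\omega}$ and the plan of ``running the socle analysis through finite $H$-invariant sections and passing to a limit'' do not correspond to anything in the paper's argument and, as sketched, do not close the gap: the obstruction you correctly identify (trivial subquotients need not lift, and $(h-1)$ can raise heights, when $2\mid|H|$) is real, and the way out is to avoid the $2$-part entirely rather than to control it.
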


 \begin{proof} 
(\ref{(1)}) Let $A$ be a reduced $2$-group of height $\tau$, and let $G$ be an integral of $A$ with $G/A$ finite. Let $G/A = H/A\times Q/A$, where $H$ is a $2$-group and $Q/A$ a finite group of odd order. By coprime action (since $Q/A$ is finite and $A$ an abelian $2$-group), $A = [A,Q] \times C$, where $C = C_A(Q)$. As $[A,Q]\nor G$, we have $(H/[A,Q])' = A/[A,Q]$, hence $C \cong A/[A,Q]$ is finitely $2$-integrable and so, by Corollary \ref{corol-red},
$C/C^2$ is finite. 
Since $C$ is reduced, Lemma~\ref{l:finite} shows that $C$ is finite.
Then, in particular, $C\cap A[2]$ is finite.

Now the action of $Q$ on any section of $A[2]$ is completely reducible; so,
if $C\cap A^\sigma\cap A[2]=C\cap A^{\sigma+1}\cap A[2]$, then $Q$ acts
fixed-point-freely on $(A\sigma\cap A[2])/(A^{\sigma+1}\cap A[2])=U_\sigma(A)$.
Hence only a finite number of sections $U_{\sigma}(A)$ (with $\sigma < \tau$)
may be  centralized by $Q$,
and so only a finite number of such sections are cyclic.

\smallskip
(\ref{(2)}) Let $A$ be a reduced $2$-group of height $\tau > \omega$, and suppose that $f_{\sigma}(A)= 1$ for some ordinal $\omega \le \sigma < \tau$.  Let $Q\le\Aut(A)$ be a finite group of odd order. Then $A = C\times B$, where $C = C_A(Q)$ and $B = [A,Q]$.  Now, $Q$ centralizes the factor $U_{\sigma}(A)$, hence, in particular, $C \cap A^{2^{\omega}} \ne 1$. But, clearly, $A^{2^{\omega}} = C^{2^{\omega}}\times B^{2^{\omega}}$, hence $C^{2^{\omega}}\ne 1$, which implies, in particular, that $C$ is an infinite reduced group. But then, by Corollary \ref{corol-red}, $A/B \cong C$ is not $2$-integrable. This shows that $A$ is not finitely integrable. \qed
 \end{proof}

However, the converse is not true in general (even for countable groups).

\begin{example} 
For every positive integer $n\ge 1$, let $\langle a_n\rangle$ by a cyclic group of order $2^n$, and write $H = \Dir_{n\ge 1}\langle a_n\rangle$. Let 
\[N = \langle a_n^{2^{n-1}}a_{n+1}^{2^n} \mid n\ge 1\rangle,\]
and $H_{\ast} = H/N$. Now, for every $n\ge 1$, 
\[ (a_{n+1}N)^{2^n} = a_{n+1}^{2^n}N = (a_{n}N)^{2^{n-1}} = \ldots = a_1N;\]
 thus
\begin{equation}\label{eqex1}
\langle a_1N\rangle  = \bigcap_{n\ge 1}H_{\ast}^{2^n} =  H_{\ast}^{2^{\omega}}.
\end{equation}
It is not difficult to show that $H_{\ast}/H_{\ast}^{2^{\omega}}\cong H$, and that $f_{\sigma}(H_{\ast}) = 1$ for every finite ordinal $\sigma \le \omega$ (while $f_n(H) =1$ for $n$ a finite ordinal and $f_{\omega}(H)=0$).

We consider $A = H_1\times H_2\times H$, with $H_1\cong H_2\cong H_{\ast}$.  Then $f_n(A) = 3$ for every finite ordinal $n$, and $f_{\omega}(A) = 2$. 

We claim that $A$ is not finitely integrable. Suppose, by contradiction, that there exists a group $G$ with $A= G'$ and $G/A$ finite, and let $Q$ be the odd order component of $G/A$. Now
$Q$ acts on $A$ and we may suppose that the action is faithful. Then $Q$ acts on every section $U_{\sigma}(A)$. As these sections are elementary abelian of order $2^3$ or $2^2$, we have that, for each $\sigma \le \omega$,  $Q/C_Q(U_{\sigma}(A))$ is cyclic of order $1$, $3$ or $7$. Since, by coprime action,
\[ \bigcap_{\sigma\le \omega}C_Q(U_{\sigma}(A)) = C_Q(A[2]) = C_Q(A) = 1,\]
we conclude that $Q$ is the direct sum of cyclic groups of order $3$ or $7$. 

 Suppose that $A^{2^{\omega}}$ is centralized by $Q$; then, writing $Y = C_A(Q)$,
 \[ [A,Q]^{2^{\omega}} \le A^{2^{\omega}} \cap [A,Q] \le [A,Q]\cap Y = 1.\]
 It thus follows from  the decomposition $A = [A,Q]\times Y$ that $A^{2^{\omega}} = Y^{2^{\omega}}$; in particular, $Y$ is infinite and, since it is also reduced (because $A/A^{2^{\omega}} = W/B$ is reduced and $A^{2^{\omega}}$ finite), $Y/Y^2$ is infinite. On the other hand, $Y\cong A/[A,Q]$ is finitely $2$-integrable and so, by Corollary \ref{corol-red},  $Y/Y^2$ is finite, which is absurd. 
 
 Thus, $C_Q(A^{2^{\omega}}) < Q$ and so, as $A^{2^{\omega}}$ is elementary abelian of order $4$, there is an element $x$ of order $3$ in $Q$ such that $[A^{2^{\omega}}, x] = A^{2^{\omega}}$. Let $C = C_A(x)$ and $K = [A,x]$. Then $A^{2^{\omega}} \le K$ and so, arguing as before, $K$ is infinite and, in particular, $K[2]$ is infinite. As $x$ acts fixed point freely on $K$ we deduce that every section $U_{\sigma}(K)$ (with $\sigma\le \omega$) is either trivial or of rank $2$. In particular, there are infinitely many positive integers $n$ such that $f_n (K) = 2$. Now, for every positive integer $n$, 
 \[ 3 = f_n(A) = f_n(C) + f_n(K),\]
 hence there are infinitely many $n$ such that $f_n(C)= 1$. Since $C$ is reduced, it follows from Proposition \ref{propo2} that $C$ is not finitely integrable. However, $C\cong A/K = (G/K)'$, and this is the final contradiction.
\end{example}

\begin{remark} The isomorphism type of a countable reduced abelian $p$-group is determined by its Ulm--Kaplansky invariants (see \cite{FUC}, Theorem 77.3), thus, in principle, the finite integrability of a countable reduced $2$-group should be readable from the sequence of its Ulm--Kaplansky invariants.
\end{remark}

\subsubsection{Torsion-free groups}
The case of abelian torsion-free groups seems much more involved, and we have at the moment little to say.  

\begin{prop} There exist torsion-free abelian groups that are not finitely integrable.
\end{prop}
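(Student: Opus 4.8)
The plan is to exhibit a torsion-free abelian group $A$ that is too rigid to admit the kind of finite automorphism action that finite integrability forces. First I would isolate the necessary condition hidden in the definition: if $A$ is abelian and finitely integrable, say $A = G'$ with $Q := G/G'$ finite, then $A/[A,Q]$ is finite, where $Q$ acts on $A$ by conjugation (well defined since $A$ is abelian). To see this, recall that $Q=G/G'$ is abelian, and set $N = [A,G] = [A,Q] \trianglelefteq G$, with $N \le A$. In $\bar G = G/N$ the image $A/N$ is central and equals $\bar G'$; since $\bar G$ is a central extension of the finite group $Q$ by $A/N$, the commutator map descends to an alternating biadditive pairing $Q \times Q \to A/N$. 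Hence $A/N = A/[A,Q]$ is generated by the finitely many commutators of a transversal of $Q$, each of order dividing $\exp(Q)$, so it is finite. This is the general-$Q$ companion of Corollary~\ref{corol-red}, which handles the case where $Q$ is a $2$-group.

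Given this, it suffices to produce a torsion-free $A$ for which $A/[A,Q]$ is infinite for \emph{every} finite $Q \le \Aut(A)$. I would take $A$ to be a \emph{rigid} group, that is, one with $\End(A) = \Z$, of infinite rank, and with $A/2A$ infinite. Then $\Aut(A) = \{\pm 1\}$, so the only finite subgroups are $\{1\}$ and $\{\pm 1\}$. For $Q=\{1\}$ we get $[A,Q]=0$, so $A/[A,Q]=A$ is infinite; for $Q=\{\pm 1\}$ the nontrivial element acts by inversion, so $[A,Q]=2A$ and $A/[A,Q]=A/2A$ is infinite by hypothesis. Any non-faithful finite action factors through $\Aut(A)$ (the kernel acts trivially and contributes nothing to $[A,Q]$), so these two cases exhaust all finite $Q$. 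Thus no finite $Q$ can make the coinvariants finite, and by the necessary condition $A$ is not finitely integrable. Note that rigidity is genuinely needed here: a less rigid lattice admits exotic finite actions with trivial coinvariants (for instance $\Z[\zeta_6]^{(\omega)}$ under multiplication by $\zeta_6$ has $[A,C_6]=A$), which is why free abelian groups are finitely integrable.

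The crux is the existence of such an $A$. For this I would invoke the realization theorem of Corner and its extensions (Corner--G\"obel), which produce torsion-free abelian groups of arbitrarily large infinite rank whose endomorphism ring is exactly $\Z$. The one extra point is to guarantee $A/2A$ infinite: I would run the construction at an odd prime $p$, so that $A$ sits inside the $p$-adic hull of a free group $F$ of infinite rank; then $A$ is untouched at the prime $2$, giving $A/2A \cong F/2F$, which is infinite. This step is the main obstacle and the only nonelementary ingredient in the argument; everything else is formal. In the write-up I would take care to state precisely which version of the realization theorem is used and to verify the $A/2A$ computation for that explicit construction, since that is exactly where the proof could go wrong.
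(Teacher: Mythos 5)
Your argument takes a genuinely different route from the paper's, and its first half is a real improvement. The necessary condition you isolate --- if $A=G'$ is abelian with $Q=G/A$ finite, then the coinvariants $A/[A,Q]$ are finite --- is correct (it is essentially Schur's theorem applied to $G/[A,G]$, whose centre has finite index), and it is strictly more flexible than Corollary~\ref{corol-red}, which is what the paper leans on and which only treats $2$-groups. The paper instead works with the explicit group $A=\Dir_{p}\Z[1/p]$: each summand $\Z[1/p]$ is the maximal $p$-divisible subgroup, hence characteristic, so every finite-order automorphism of $A$ is an involution; the paper then argues through $C=C_G(A)$, Schur's theorem applied to $C$, and Corollary~\ref{corol-red} applied to a $2$-group quotient.

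The weak point is exactly the one you flag. The classical form of Corner's theorem realizes $\Z$ as the endomorphism ring of a group of \emph{finite} rank, and by Corollary~\ref{cortf1} every finite-rank torsion-free abelian group is finitely integrable, so you genuinely need an infinite-rank realization theorem (Dugas--G\"obel type); on top of that, $\End(A)=\Z$ only guarantees $A/2A\neq 0$, not $A/2A$ infinite, and your claim $A/2A\cong F/2F$ requires knowing that $F$ is $2$-pure in $A$ and that $A/F$ is $2$-divisible without $2$-torsion, which depends on details of the construction you have not pinned down. So as written there is a gap. It is easily closed, however, and in fact your own lemma makes the heavy machinery unnecessary: apply it to the paper's $A=\Dir_{p}\Z[1/p]$. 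Since each summand is characteristic, the image of $Q$ in $\Aut(A)$ consists of involutions acting as $\pm1$ on each summand, so $(\sigma-1)A\le 2A$ for every $\sigma$ in the image and hence $[A,Q]\le 2A$; therefore $A/[A,Q]$ surjects onto $A/2A\cong\Dir_{p\neq 2}\Z/2\Z$, which is infinite, contradicting your necessary condition. That hybrid is shorter than either your proposal or the paper's proof.
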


\begin{proof} For every prime $p$ let $A_p = \Z[\frac{1}{p}]$ (written multiplicatively), and $A = \Dir_{p}A_p$. For every $p$, $A_p$ is the largest $p$-divisible subgroup of $A$ and is therefore characteristic. This implies that every automorphism of finite order of $A$ is an involution. Observe also that $A/A^2$ is infinite.

Now, suppose there exists an integral $G$ of $A$ such that $|G:A|$ is finite and write $C = C_G(A)$. By what was observed above,
as the action of $G/C$ is by automorphisms on $A$ which are involutions,
$G/C$ is a finite $2$-group. Moreover, $Z(C)$ has finite index in $C$ and so $C'$ is finite. As $A$ is torsion-free, we thus have 
$C'\cap A= 1$ and we may well suppose $C'=1$. Let $K = C^2$; we then have $K/A^2 \cong (C/A^2)[2] \ge A/A^2$ (via the homomorphism $cA^2 \mapsto c^2A^2$); in particular,  $C/K$ is infinite and so $AK/K$ is infinite. But then $\overline G = G/K$ is a $2$-group
with $\overline G' = AK/K$ infinite elementary abelian, which (by Corollary \ref{corol-red})
implies that $\overline G/\overline G' \cong G/AK$ is infinite, which is a contradiction. \qed
\end{proof}

\begin{theorem}\label{lemmatf2} Let $G$ be a torsion--free abelian group. If $G$ 
admits an automorphism $\alpha$ of odd order $n$ (possibly trivial) such that
$C_G(\alpha)/C_G(\alpha)^2$ is finite, then $G$ is finitely integrable.
\end{theorem}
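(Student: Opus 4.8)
The plan is to exhibit $G$ as the derived subgroup of a semidirect product $H = N \rtimes Q$ in which $Q$ is a finite \emph{abelian} group of automorphisms and $N$ is a torsion-free abelian \emph{overgroup} of $G$, obtained from $G$ by adjoining finitely many square roots. I take $Q = \langle \alpha\rangle \times \langle\iota\rangle$, where $\iota$ is the inversion automorphism $x \mapsto x^{-1}$: since $\iota$ is central in $\Aut(G)$ and has order $2$ while $\alpha$ has odd order $n$, we get $Q \cong C_{2n}$, which is abelian. For any torsion-free abelian $M$ on which $Q$ acts, the derived subgroup of $M \rtimes Q$ is exactly $[M,Q]$ (both $M$ and $Q$ being abelian), and $[M,Q] = [M,\alpha]\,M^2$ because $[M,\iota] = M^2$. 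So the whole problem reduces to arranging $[N,Q] = G$ for a suitable overgroup $N$; then $H = N\rtimes Q$ satisfies $H' = G$ with $|H:H'| = |N:G|\,|Q| < \infty$, which is finite integrability. (When $\alpha$ is trivial this is just a generalized infinite-dihedral construction, $N^2 = G$.)

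The first and main step is to show that, already for $N = G$, the subgroup $[G,Q] = [G,\alpha]\,G^2$ has finite index in $G$, i.e.\ that the obstruction $G/[G,Q]$ is finite. Reducing modulo squares, $V := G/G^2$ is an $\mathbb{F}_2[\langle\alpha\rangle]$-module, and since $n$ is odd the action is semisimple, so $\dim V_\alpha = \dim V^\alpha$ and $V = V^\alpha \oplus [V,\alpha]$; in particular $G/[G,Q] = V_\alpha$ is finite iff the fixed space $V^\alpha$ is finite. To control $V^\alpha$ I use the norm endomorphism $\nu\colon g\mapsto g\,g^\alpha\cdots g^{\alpha^{n-1}}$, a homomorphism whose image lies in $C := C_G(\alpha)$. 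On $V$ it induces $1+\alpha+\cdots+\alpha^{n-1}$, which (as $n$ is invertible mod $2$) satisfies $V^\alpha = \nu(V)$. Hence $V^\alpha \cong \nu(G)G^2/G^2$ is a quotient of $\nu(G)/(\nu(G)\cap C^2) \le C/C^2$, which is finite by hypothesis. This norm/coprime-action computation is exactly where the assumption on $C_G(\alpha)/C_G(\alpha)^2$ enters, and it is the crux of the proof.

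It then remains to kill the finite obstruction by enlarging $G$. Put $d = \dim_{\mathbb{F}_2} V^\alpha$ and choose $x_1,\dots,x_d \in \nu(G) \le C$ whose images form a basis of $V^\alpha$; since $V = V^\alpha \oplus [V,\alpha]$, these images also form a basis of the coinvariants $V_\alpha = G/[G,Q]$. The $x_i$ are independent modulo $G^2$ and are fixed by $\alpha$. Inside $G\otimes\mathbb{Q}$ I set $N = \langle G, t_1,\dots,t_d\rangle$ with $t_i^2 = x_i$; then $N$ is torsion-free with $N/G \cong (C_2)^d$, and both $\alpha$ (fixing each $t_i$, as $x_i\in C$) and $\iota$ extend to $N$, so $Q$ acts on $N$. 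A direct recomputation gives $[N,\alpha]=[G,\alpha]$ and $N^2 = G^2\langle x_1,\dots,x_d\rangle$, whence $[N,Q] = [G,\alpha]\,G^2\langle x_i\rangle = [G,Q]\langle x_i\rangle = G$, because the images of the $x_i$ generate $G/[G,Q]$. Thus $H = N\rtimes Q$ has $H' = G$ and $|H:H'| = 2^d\cdot 2n$, finite. The hard part is the finiteness of $V^\alpha$ in the second paragraph; the remaining verifications (that adjoining square roots keeps $N$ torsion-free, that $Q$ still acts, and that $H' = [N,Q]$) are routine.
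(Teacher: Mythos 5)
Your proof is correct, and its second half takes a genuinely different route from the paper. The first step is essentially the paper's: both arguments use the norm $g\mapsto g^{1+\alpha+\cdots+\alpha^{n-1}}\in C_G(\alpha)$ together with the coprimality of $n$ and $2$ to show that $G/[G,\alpha]G^2$ is a quotient of a subgroup of $C_G(\alpha)/C_G(\alpha)^2$, hence finite (you phrase this module-theoretically via semisimplicity of $G/G^2$ over $\mathbb{F}_2[\langle\alpha\rangle]$, the paper multiplicatively, but it is the same computation). Where you diverge is in how the finite obstruction $G/[G,\alpha]G^2$ is handled. The paper observes that $\beta=\alpha\lambda$ ($\lambda$ the inversion) is fixed-point-free on the torsion-free group $G$ (a fixed point would satisfy $g^\alpha=g^{-1}$, hence $g=g^{\alpha^n}=g^{-1}$ and $g=1$), so that $[G,\beta]\cong G/C_G(\beta)=G$; then $H=G\rtimes\langle\beta\rangle$ already works, since $H'=[G,\beta]$ is \emph{isomorphic} to $G$ and contains $[G,\alpha]G^2$, hence has finite index. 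This is shorter and requires no enlargement of $G$, at the price of realizing $G$ only up to isomorphism as $H'$. You instead insist on $H'=G$ on the nose and achieve it by adjoining, inside $G\otimes\mathbb{Q}$, square roots of norm elements $x_i$ whose classes base $V^\alpha\cong G/[G,\alpha]G^2$; all the verifications you defer (the $t_i$ are $\alpha$-fixed by unique divisibility of $G\otimes\mathbb{Q}$, $N/G\cong(C_2)^d$ by independence of the $x_i$ modulo $G^2$, $[N,\alpha]=[G,\alpha]$ and $N^2=G^2\langle x_1,\dots,x_d\rangle$) do go through. Both constructions produce an integral of index $2^d\cdot 2n$ over the copy of $G$; your version buys the slightly stronger statement that $G$ embeds as the actual derived subgroup of a finite overgroup extension, while the paper's buys brevity.
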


\begin{proof}  Let $G$ be an abelian torsion-free group and let $\alpha$ be an
automorphism of odd order $n$ of $G$ such that  $C_G(\alpha)/C_G(\alpha)^2$
is finite. Then, for every $g\in G$,
\[ g^n[g,\alpha]\cdots [g,\alpha^{n-1}]  = g^{1 + \alpha + \ldots + \alpha^{n-1}}\in C_G(\alpha);\]
hence $G^n \le C_G(\alpha)[G,\alpha]$. Then, since $n$ is odd,   
\[ \frac{G}{[G,\alpha]G^2}= \frac{C_G(\alpha)[G,\alpha]G^2}{[G,\alpha]G^2} \cong \frac{C_G(\alpha)}{C_G(\alpha)\cap [G,\alpha]G^2}\]
 is finite by hypothesis. Denote by $\lambda$ the inversion automorphism of $G$; then $\beta = \alpha\lambda$ is a fixed-point-free automorphism of order $2n$ of $G$, whence  $[G, \beta] \cong G/C_G(\beta) = G$. But 
\[[G, \beta] \ge [G, \alpha][G,\lambda] = [G, \alpha] G^2\]
 has finite index in $G$. Thus, by letting $H = G\sdir\langle \beta\rangle$, we have $H' = [G,\beta]\cong G$, showing that $G$ is finitely integrable. \qed
\end{proof}

\begin{cor}\label{cortf1} Every torsion--free abelian group of finite rank is finitely integrable.
\end{cor}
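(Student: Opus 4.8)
The plan is to derive this corollary from Theorem~\ref{lemmatf2} by exhibiting, for each torsion-free abelian group $G$ of finite rank, a suitable automorphism $\alpha$ of odd order whose fixed-point subgroup is small modulo squares. The cheapest move is to try $\alpha = \mathrm{id}$ (the ``possibly trivial'' case): then $C_G(\alpha) = G$, and the hypothesis of Theorem~\ref{lemmatf2} asks precisely that $G/G^2$ be finite. So the heart of the matter is to show that a torsion-free abelian group of finite rank $r$ satisfies $|G/G^2| < \infty$, and indeed I expect $|G/G^2| \le 2^r$.

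First I would recall the structure of torsion-free abelian groups of finite rank: such a $G$ embeds in the $\Q$-vector space $G \otimes_{\Z} \Q$, which has dimension $r = \rk(G)$. Equivalently $G$ is (isomorphic to) a subgroup of $\Q^r$ containing a free abelian group of rank $r$ with torsion quotient. The key point is then that $G^2 = 2G$ (writing additively) and the quotient $G/2G$ is an elementary abelian $2$-group. To bound its order, I would argue that $G/2G$ has rank at most $r$: choosing a maximal $\Z$-independent set $x_1,\dots,x_r$ in $G$, these span $G\otimes\Q$, so any $g\in G$ satisfies $mg \in \langle x_1,\dots,x_r\rangle$ for some nonzero integer $m$; a short argument (taking $m$ odd where possible, or clearing the $2$-part carefully) shows the images of the $x_i$ generate $G/2G$, whence $|G/2G| \le 2^r$. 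This is the one genuinely computational step, but it is routine linear algebra over $\Z$ localized at $2$.

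With $G/G^2 = G/2G$ finite established, the corollary follows immediately: apply Theorem~\ref{lemmatf2} with the trivial automorphism $\alpha = 1$ of order $n=1$ (which is odd), for which $C_G(\alpha)/C_G(\alpha)^2 = G/G^2$ is finite by the previous paragraph. The theorem then produces the integral $H = G \sdir \langle\beta\rangle$ with $\beta = \lambda$ the inversion map of order $2$, $H' \cong G$, and $|H:H'|$ finite, so $G$ is finitely integrable.

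The main obstacle is conceptual rather than technical: one must notice that the finite-rank hypothesis is exactly what forces $G/2G$ to be finite, since infinite-rank examples (such as the $\Dir_p \Z[1/p]$ appearing in the preceding proposition) fail precisely because $G/G^2$ is infinite there. So the proof hinges on isolating \emph{finite rank} as the property guaranteeing finiteness of $G/G^2$, after which Theorem~\ref{lemmatf2} does all the remaining work with no need to construct any nontrivial automorphism. I would keep the write-up to a couple of sentences: establish $|G/2G|\le 2^{\rk(G)}$, then invoke Theorem~\ref{lemmatf2} with $\alpha$ trivial.
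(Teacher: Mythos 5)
Your overall strategy is exactly the intended one: the paper states the corollary without proof, and the parenthetical ``(possibly trivial)'' in Theorem~\ref{lemmatf2} signals precisely the application you make, namely $\alpha=1$, so that the hypothesis reduces to finiteness of $G/G^2$, which is where finite rank enters. However, one intermediate claim in your computational step is false as stated: it is \emph{not} true that the images of a maximal $\Z$-independent set $x_1,\dots,x_r$ generate $G/2G$. For example, in $G=\Z^2$ the set $\{(1,0),(1,2)\}$ is maximal independent, but both elements map to $(1,0)$ in $G/2G\cong(\Z/2\Z)^2$, and no amount of ``clearing the $2$-part'' helps, since for $g=(0,1)$ the only multiples $mg$ lying in $\langle(1,0),(1,2)\rangle$ have $m$ even. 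The conclusion you need, $\dim_{\mathbb{F}_2}(G/2G)\le r$, is nevertheless true; the correct routine argument bounds the dimension directly rather than exhibiting generators: given any $g_1,\dots,g_{r+1}\in G$, finite rank gives a relation $\sum m_ig_i=0$ with integer coefficients not all zero, and since $G$ is torsion-free one may divide out powers of $2$ until not all $m_i$ are even; reducing modulo $2G$ then shows that any $r+1$ elements of $G/2G$ are $\mathbb{F}_2$-linearly dependent, whence $|G/2G|\le 2^r<\infty$. With that repair, your deduction via Theorem~\ref{lemmatf2} (with $\beta$ the inversion map) is complete and agrees with the paper.
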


Proposition \ref{lemmatf2} implies, in particular, that a torsion-free abelian group admitting a fixed-point-free automorphism of odd order is finitely integrable. However, because of the abundance of indecomposable torsion--free abelian groups, nothing similar to Theorem \ref{propo2} is to be expected in this case. For instance, examples of indecomposable (as direct sums) torsion-free groups admitting a fixed-point-free automorphism of order $3$ may be found in Chapter XVI of \cite{FUC}: e.g.$\!$ Example 2 at page 272. That group is indeed of rank $2$ and so it is finitely integrable anyway by Corollary \ref{cortf1}; it is however possible to extend that construction to obtain indecomposable groups of infinite rank with a fixed-point-free automorphism of order $3$. 

\begin{example}
Let $\mathcal P$ be a partition into an infinite number of infinite disjoint subsets of the set of all primes $q\equiv 1\!\!\pmod 6$. For every $I\in \mathcal P$, let $A_I$ be a torsion--free group as constructed, with respect to the set of primes in $I$, in Example 2 of page 272 in \cite{FUC}. The groups $A_I$ (for $I\in \mathcal P$) are indecomposable, pairwise non-isomorphic, and admit a fixed point free automorphism of order $3$.  The direct sum $G = \Dir_{I\in \mathcal P}A_I$  of these groups admits a fixed-point-free automorphism of order $3$, hence it is finitely integrable, but it is not decomposable as the direct sum of two (or more) isomorphic subgroups, and is such that $G/G^2$ is infinite.
\end{example}

\subsection{Finitely generated integrals}

We know (see \cite{accm}) that a finitely generated abelian group has a finitely generated integral (and even a nilpotent one, by Theorem~\ref{t:nilpotent-integral}). 
On the other hand it is well known that the derived subgroup of a finitely generated group need not be finitely generated: (for instance, consider the derived
subgroup of the non-abelian free group of rank 2), 
and thus it makes sense to ask which abelian groups have an integral which is finitely generated; a question that goes back to P. Hall \cite{HALL}. In his paper, Hall found necessary conditions for an abelian group to occur as a normal subgroup with polycyclic factor of a finitely generated group. Much later, in \cite[Theorem 1.1]{MIKOL}, Mikaelian and Olshanski proved that the class of abelian groups described by Hall is precisely that of groups which  are isomorphic to a subgroup of the derived group of a finitely generated (in fact, $2$-generated) metabelian group. In the same paper (Theorem 1.3 and Example 5.1), Mikaelian and Olshanskii show that not all such groups may be embedded as the derived group of a finitely generated group. 

For every set $\pi$ of primes, denote by $D_{\pi}$ the set of all rational numbers whose denominator is a positive $\pi$-number: also, set $D_{\emptyset}=\Z$. From \cite{MIKOL} it is not difficult to retrieve the following necessary condition.

\begin{prop} Let $G = T\times D$ where
\begin{itemize}
\item  $T$ is a finite or countable abelian group of finite exponent, and 
\item $D$ is the direct sum of a family of groups $\{ D_{\pi_i}\mid i\in I\}$, with $I$ finite or countable and $\bigcup_{i\in I}\pi_i$ finite. 
\end{itemize}
Then there exists a finitely generated group $H$ such that $G\cong H'$.
\end{prop}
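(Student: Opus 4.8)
The plan is to build $H$ as a finite direct product of finitely generated pieces, exploiting that $(H_1\times H_2)'=H_1'\times H_2'$: if each piece is finitely generated and realizes part of $G$ as its derived subgroup, then the product is finitely generated and realizes all of $G$. So I would first break $G$ into ``isotypic'' components. By Pr\"ufer's theorem $T$, being abelian of finite exponent $e$, is a direct sum of cyclic groups, and grouping these by order gives $T=\Dir_{d\mid e}C_d^{(\kappa_d)}$ with $\kappa_d\in\{0,1,\dots,\aleph_0\}$; since $e$ has finitely many divisors there are finitely many types. Likewise, writing $\pi^\ast=\bigcup_i\pi_i$ (finite) and noting each $\pi_i\subseteq\pi^\ast$, there are only $2^{|\pi^\ast|}$ possible values of $D_{\pi_i}$, so $D=\Dir_{\sigma\subseteq\pi^\ast}D_\sigma^{(\lambda_\sigma)}$ with $\lambda_\sigma\in\{0,1,\dots,\aleph_0\}$. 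Thus it suffices to realize, for each type, its whole multiplicity as a derived subgroup of a finitely generated group, and I separate the finite-multiplicity and infinite-multiplicity parts.

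For the finite-multiplicity part, what remains is a finite direct sum of cyclic groups (a finite abelian group $F$) together with finitely many copies of groups $D_\sigma$ and of $\Z$ (a torsion-free group of finite rank). The finite abelian group $F$ is realized by Guralnick's observation already used in the paper: $(F\wr C_2)'\cong F$, and $F\wr C_2$ is finite, hence finitely generated. Each $D_\sigma=\Z[1/\sigma]$ with $\sigma\neq\emptyset$ is realized as the derived subgroup of the two-generated Baumslag--Solitar group $\langle a,t\mid tat^{-1}=a^{m_\sigma}\rangle$, where $m_\sigma=\prod_{p\in\sigma}p$: its abelianization is $C_{m_\sigma-1}\times\Z$, so its derived subgroup is $(m_\sigma-1)\Z[1/\sigma]$, which is isomorphic to $\Z[1/\sigma]=D_\sigma$ because multiplication by the nonzero integer $m_\sigma-1$ is an injective endomorphism of the torsion-free group $\Z[1/\sigma]$, whose image is therefore isomorphic to it. A copy of $\Z$ (the case $\sigma=\emptyset$) is realized as the centre, equal to the derived subgroup, of the discrete Heisenberg group. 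Taking the direct product of these finitely many finitely generated groups handles the finite-multiplicity part.

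For the infinite-multiplicity part I realize $\Dir_\N X$ for each type $X$. When $X=C_d$ this is immediate: the base-group commutators of the wreath product $C_d\wr\Z=(\Dir_\Z C_d)\rtimes\langle s\rangle$ form the augmentation submodule of $(\Z/d)[s^{\pm1}]$, which is a free $\Z/d$-module of countable rank, so $(C_d\wr\Z)'\cong\Dir_\N C_d$. When $X=D_\sigma$ I take the finitely generated metabelian group $H_\sigma=B\rtimes\langle s,t\rangle$, where $B=\Dir_\Z\Z[1/\sigma]$, the generator $s$ acts by the coordinate shift, and $t$ acts by diagonal multiplication by $m_\sigma$; here $B$ is cyclic as a module over the group ring $\Z[s^{\pm1},t^{\pm1}]$, so $H_\sigma$ is three-generated. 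A direct computation gives $H_\sigma'=(s-1)B+(m_\sigma-1)B$.

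The main obstacle is this last identification: $H_\sigma'$ is in general a proper subgroup of $B$, since $m_\sigma-1$ is never a unit of $R:=\Z[1/\sigma]$, and when $2\notin\sigma$ no choice of multiplication or inversion actions can make the augmentation submodule equal to $B$ (every such action moves a generator $g$ by $g-1$ into $2R$). I resolve this not by forcing equality but by computing the isomorphism type of $H_\sigma'$. Since the shift $s$ and multiplication by $m_\sigma-1$ are $R$-linear, $H_\sigma'$ is an $R$-submodule of the free module $B=R^{(\Z)}$, and the quotient $B/H_\sigma'\cong R/(m_\sigma-1)R$ is finite, hence torsion. Because $R$ is a localization of $\Z$ and therefore a PID, every submodule of a free $R$-module is free; and as the quotient is torsion, $H_\sigma'$ has the same rank $\aleph_0$ as $B$. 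Therefore $H_\sigma'\cong R^{(\N)}=\Dir_\N D_\sigma$, as required. Assembling the finitely many factors from all three parts into a single direct product yields a finitely generated $H$ with $H'\cong G$.
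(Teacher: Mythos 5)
Your construction is correct, but it is genuinely different from what the paper does: the paper states this proposition without proof, asserting that it "is not difficult to retrieve" from Mikaelian--Olshanskii \cite{MIKOL}, who realise the relevant abelian groups inside derived subgroups of $2$-generated metabelian groups. You instead give a self-contained proof by splitting $G$ into finitely many isotypic pieces (finitely many cyclic orders $d\mid\exp T$ by Pr\"ufer, finitely many subsets $\sigma$ of the finite set $\bigcup_i\pi_i$) and realising each piece as the derived subgroup of an explicit finitely generated metabelian group --- $F\,\wr\, C_2$ for the finite torsion part, $BS(1,m_\sigma)$ and the Heisenberg group for finite multiplicities, and $C_d\wr\Z$ and your three-generated groups $B\rtimes\langle s,t\rangle$ for infinite multiplicities --- then taking the finite direct product. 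All the individual identifications check out: $[B,Q]=H'$ for $B,Q$ abelian, the augmentation ideal of $(\Z/d)[s^{\pm1}]$ is free over $\Z/d$ on $\{s^n-1:n\neq0\}$, and the PID/rank argument correctly pins down $H_\sigma'$ as free of countable rank over $R=\Z[1/\sigma]$. What your approach buys is independence from \cite{MIKOL} and an explicit integral with a bounded number of generators per isotypic type; what it costs is that $H$ is not $2$-generated, which \cite{MIKOL} would give.

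One small point to repair: in the infinite-multiplicity case with $\sigma=\emptyset$ (i.e.\ $\Dir_{\N}\Z$), you have $m_\sigma=1$, so the intermediate claim that $B/H_\sigma'\cong R/(m_\sigma-1)R$ is finite is false there ($t$ acts trivially and the quotient is $\Z$). The conclusion survives --- $H_\sigma'=(s-1)\Z[s^{\pm1}]$ is the augmentation ideal of $\Z[\Z]$, free abelian on $\{s^n-1:n\neq0\}$, hence $\cong\Dir_{\N}\Z$, exactly as in your $C_d\wr\Z$ computation --- but you should treat this degenerate case separately rather than letting it fall under the ``finite torsion quotient'' argument.
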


We have no idea whether this condition is also necessary; thus, a full characterization of abelian groups that have a finitely generated integral seems to be still open.


\section{Varieties of groups}
\label{sec:varieties}

\subsection{The integral of a variety}

The starting point of this subsection was Problem 10.13 of \cite{accm}. We know that, if $\mathbf{V}$ is a variety of groups, then the class of all
integrals of groups in $\mathbf{V}$ is a variety \cite{accm}.
In fact we can say a bit more. 

\begin{prop}\label{varieties}
The class of integrals of groups in the variety $\mathbf{V}$ is a variety;
indeed it is the product variety $\mathbf{VA}$, where $\mathbf{A}$ is the
variety of abelian groups.
\end{prop}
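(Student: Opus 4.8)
The plan is to prove the two assertions separately but in a mutually supporting way: first that every integral of a group in $\mathbf{V}$ lies in the product variety $\mathbf{VA}$, and then conversely that every group in $\mathbf{VA}$ is an integral of some group in $\mathbf{V}$ (indeed of its own derived subgroup). Recall that, by definition, a group $H$ lies in the product variety $\mathbf{VA}$ precisely when it has a normal subgroup $N\in\mathbf{V}$ with $H/N$ abelian; equivalently, $\mathbf{VA}$ consists of those $H$ admitting a normal abelian-by-$\mathbf{V}$... more precisely, $H\in\mathbf{VA}$ iff there is $N\nor H$ with $N\in\mathbf{V}$ and $H/N\in\mathbf{A}$.

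For the forward inclusion, suppose $H$ is an integral of some $G\in\mathbf{V}$, so that $H'\cong G\in\mathbf{V}$. Take $N=H'$. Then $N=H'\in\mathbf{V}$ and $H/N=H/H'$ is abelian, so $H\in\mathbf{VA}$ directly from the definition of the product variety. This direction is essentially immediate and requires only that $\mathbf{V}$ is closed under isomorphism.

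For the reverse inclusion, I would take an arbitrary $H\in\mathbf{VA}$ and show it is an integral of a group in $\mathbf{V}$. The natural candidate is to check that $H'$ itself lies in $\mathbf{V}$, since then $H$ is by definition an integral of $H'\in\mathbf{V}$. By hypothesis there is $N\nor H$ with $N\in\mathbf{V}$ and $H/N$ abelian. Since $H/N$ is abelian we have $H'\le N$. The key point is then that $H'$, being a subgroup of $N\in\mathbf{V}$, lies in $\mathbf{V}$ as well, because varieties are closed under taking subgroups. Hence $H'\in\mathbf{V}$ and $H$ is an integral of the group $H'\in\mathbf{V}$, giving $H\in\{\text{integrals of groups in }\mathbf{V}\}$.

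The main obstacle, and the step to state carefully, is the verification that $\mathbf{VA}$ coincides with the class being described, i.e.\ pinning down exactly which definition of product variety is in force and confirming that membership $H\in\mathbf{VA}$ is equivalent to the existence of a normal $\mathbf{V}$-subgroup with abelian quotient; the inclusions themselves are then short. I would cite the standard fact (from Hanna Neumann's theory of product varieties) that $\mathbf{UW}$ consists of all extensions of a group in $\mathbf{U}$ by a group in $\mathbf{W}$, and note that since the class of integrals of $\mathbf{V}$-groups is already known to be a variety by \cite{accm}, it suffices to match the two descriptions, which the two inclusions above accomplish.
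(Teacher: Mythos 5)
Your proposal is correct and follows essentially the same route as the paper: the forward inclusion via $N=H'\in\mathbf{V}$ with abelian quotient, and the reverse inclusion via $H'\le N\in\mathbf{V}$ together with subgroup-closure of $\mathbf{V}$, with the fact that $\mathbf{VA}$ is a variety quoted from Hanna Neumann's book. No gaps.
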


\begin{proof}
The product variety $\mathbf{VA}$ consists of all groups which have a normal
subgroup in $\mathbf{V}$ with quotient in $\mathbf{A}$; it is a variety
(Neumann~\cite[21.11]{hn}). Clearly, if $H$ is an integral of a group
$G\in\mathbf{V}$ then $H\in\mathbf{VA}$.

Conversely, suppose that $H\in\mathbf{VA}$, then there is a subgroup
$N\unlhd H$ with $N\in\mathbf{V}$ and $H/N$ abelian; so $H'\le N$ and so
$H'\in\mathbf{V}$, since $\mathbf{V}$ is subgroup-closed. \qed
\end{proof}

We call $\mathbf{VA}$ the \emph{integral} of $\mathbf{V}$.

\medskip

Let $G$ be a finite group, $\mathbf{V}$ the variety generated by $G$. Then
$\mathbf{V}$ is finitely based, by the Oates--Powell Theorem, see
\cite[52.11]{hn}. Is the integral $\mathbf{W}$ of $\mathbf{V}$ also
finitely based?

A basis for $\mathbf{W}$ consists of the identities
\[v(x_1,\ldots,x_m)=1,\]
where $v$ is an identity of $\mathbf{V}$ and $x_1,\ldots,x_m$ are elements
of the relevant free group which are products of commutators \cite[21.12]{hn}.
This set is infinite. So, to get a finite basis
for the identities, we would require that there is a positive integer $k_0$
with the property that, if each identity $v(u_1,\ldots,u_r)$ holds when each
$u_i$ is a product of at most $k_0$ commutators, then it holds in general
without this restriction.

This leads us to the following definition.
Let $S$ be a symmetric subset of a group $G$ (that is, $S=S^{-1}$). Let
\[B_k(S)=\bigcup_{i=0}^kS^i\]
be the ball of radius $k$ in the Cayley graph of $G$ with respect
to~$S$.

We say that a group identity $w=1$ has \emph{gauge} $k$ if, whenever
$S$ is a symmetric generating set for a finite group $G$ and the identity
$w=1$ holds in $B_k(S)$, then the identity holds in $G$.

The \emph{gauge} of a variety $\mathbf{V}$ is the smallest $k$ such that, if
the identities $w=1$ defining the variety all hold in $B_k(S)$, where $S$
is a symmetric generating set for a group $G$, then $G\in\mathbf{V}$. (This is
in general weaker than requiring that all the identities defining $\mathbf{V}$
have gauge at most $k$. But if $\mathbf{V}$ is finitely based, then it is
defined by a single identity, and we can require this identity to have
finite gauge.)

\begin{remark} 
Why symmetric? First, it makes a difference. Consider
the symmetric group $S_n$ with the usual generating set $a=(1,2)$ and
$b=(1,2,\ldots,n)$. Consider the metabelian
identity $[[x,y],[z,w]]=1$. If we use the generating set
$S=\{a=a^{-1},b,b^{-1}\}$, and
we substitute $x=a$, $y=b$, $z=a$, $w=b^{-1}$, the identity does not hold.
But if we use the generating set $\{a,b\}$,
any substitution of generators
satisfies one of $x=y$; $z=w$; $\{x,y\}=\{z,w\}$. In each case the identity
is satisfied.

Second, in our application, the generating sets that arise will be symmetric.
For the
derived group of a group $G$ is generated by all commutators $[x,y]$
for $x,y\in G$, and $[x,y]^{-1}=[y,x]$.
\end{remark}

Our earlier considerations give the following result.

\begin{theorem}
Let $\mathbf{V}$ be a variety which is finitely based and has
finite gauge. Then the integral of $\mathbf{V}$ is finitely based.
\end{theorem}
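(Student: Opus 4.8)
The plan is to use the two hypotheses to trim the infinite basis for $\mathbf{W}=\mathbf{VA}$ described via \cite[21.12]{hn} down to a finite one. First I would record that, by Proposition~\ref{varieties}, a group $H$ belongs to $\mathbf{W}$ precisely when $H'\in\mathbf{V}$, and that since $\mathbf{V}$ is finitely based it is defined by a single identity $v(x_1,\ldots,x_m)=1$; the finite-gauge hypothesis then lets me take this identity to have some finite gauge $k$.

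The key move is to apply the gauge of $\mathbf{V}$ to the derived group equipped with its most natural generating set. Thus I would put $S=\{[x,y]\mid x,y\in H\}$, which generates $H'$ and is symmetric because $[x,y]^{-1}=[y,x]$; then $B_k(S)$ is exactly the set of products of at most $k$ commutators of $H$. By the definition of the gauge of $\mathbf{V}$ (its ``if'' direction, the converse being automatic since laws hold everywhere in a group of $\mathbf{V}$), we get $H'\in\mathbf{V}$ if and only if $v=1$ holds throughout $B_k(S)$, that is, if and only if
\[
v(u_1,\ldots,u_m)=1\qquad\text{for all }u_1,\ldots,u_m\in B_k(S).
\]

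It remains to recognise this condition as a finite conjunction of group identities. Every $u_i$ has the shape $\prod_{j=1}^{\ell_i}[a_{ij},b_{ij}]$ with $0\le\ell_i\le k$, so the admissible tuples $(u_1,\ldots,u_m)$ are governed by the finitely many length-profiles $(\ell_1,\ldots,\ell_m)\in\{0,1,\ldots,k\}^m$. For a fixed profile the corresponding requirement is a single identity in the at most $2km$ variables $a_{ij},b_{ij}$, and the displayed condition is equivalent to the conjunction of these at most $(k+1)^m$ identities. Hence $\mathbf{W}$ is defined by finitely many laws, which is exactly what we want.

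I expect the only genuine subtlety to be this last translation: one must check that ``$v=1$ holds on the infinite set $B_k(S)$'' really does collapse to finitely many word identities, which happens precisely because $v$ has a fixed number $m$ of variables and each argument is constrained to be a product of boundedly many commutators. A secondary point to verify is that the gauge of $\mathbf{V}$ is formulated for an \emph{arbitrary} group $G$, so it may be invoked even when $H'$ is infinite, and thus no finiteness hypothesis on $H$ is required for the argument to run.
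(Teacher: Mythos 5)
Your argument is correct and is essentially the paper's own: the paper's "proof" is just the remark that the preceding considerations (the Neumann basis for $\mathbf{VA}$ via products of commutators, plus the gauge condition applied to the symmetric generating set $S=\{[x,y]\mid x,y\in H\}$ of $H'$) yield the result, and you have simply written those considerations out in full, including the correct observation that the restricted laws collapse to finitely many identities indexed by length-profiles. Your closing caveat is also apt: the paper's definition of the gauge of a \emph{variety} is indeed stated for arbitrary groups $G$ (unlike its definition of the gauge of a single identity, which mentions finite groups), so the argument applies with $H'$ infinite.
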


\begin{prop}
Each of the following varieties has gauge~$1$: the variety $\mathbf{A}$ of
abelian groups, the variety $\mathbf{A}_m$ of abelian groups of exponent
dividing $m$, and the variety $\mathbf{N}_c$ of nilpotent groups of class at
most $c$.
\end{prop}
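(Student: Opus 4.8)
The plan is to prove each of the three varieties has gauge $1$ by showing that if the defining identities hold on the ball $B_1(S)=\{1\}\cup S$, then they hold throughout $G$. Recall that gauge $1$ means: whenever $S$ is a symmetric generating set for a finite group $G$ and the defining identity holds for all substitutions of the variables by elements of $B_1(S)=\{1\}\cup S$, then $G$ lies in the variety. So for each variety I must identify a useful consequence of the identity restricted to $B_1(S)$ and bootstrap it to all of $G$ by induction on word length in the generators.

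For $\mathbf{A}$, the defining identity is $[x,y]=1$. Requiring this on $B_1(S)$ says exactly that all pairs of generators $s,t\in S$ commute. First I would check that if every pair of generators commutes then $G$ is abelian: writing arbitrary elements as words in $S$, commutativity of generators forces commutativity of all words, by a straightforward induction on the combined length. For $\mathbf{A}_m$, the defining identities are $[x,y]=1$ together with $x^m=1$; on $B_1(S)$ these say the generators commute (so $G$ is abelian by the previous step) and each generator $s\in S$ satisfies $s^m=1$. In an abelian group, once each generator has order dividing $m$, every element (a product of generators) also satisfies $x^m=1$, since $m$th powers distribute over products. Hence $G\in\mathbf{A}_m$.

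For $\mathbf{N}_c$, the relevant identity is the iterated commutator $[x_1,x_2,\ldots,x_{c+1}]=1$ (left-normed). Requiring it on $B_1(S)$ says that every left-normed commutator of length $c+1$ in \emph{generators} $s_1,\ldots,s_{c+1}\in S$ is trivial, i.e.\ $\gamma_{c+1}(G)$ is trivial when evaluated on generators. The key fact I would invoke is that the $(c+1)$st term $\gamma_{c+1}(G)$ of the lower central series is generated by the left-normed commutators $[s_1,\ldots,s_{c+1}]$ with each $s_i$ ranging over a generating set; this is a standard consequence of the commutator identities $[ab,c]=[a,c]^b[b,c]$ and $[a,bc]=[a,c][a,b]^c$ together with the fact that $\gamma_{c+1}$ is generated by weight-$(c+1)$ commutators, which one expands multilinearly modulo higher terms. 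Since every such generating commutator vanishes, $\gamma_{c+1}(G)=1$, so $G$ is nilpotent of class at most $c$.

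The main obstacle is the nilpotent case: I must be careful that the symmetric set $S$ contains inverses, so that expanding a commutator in arbitrary group elements reduces to commutators in elements of $S$ (not $S\cup S^{-1}$), and that the reduction of a general weight-$(c+1)$ commutator to commutators of generators does not spill into $\gamma_{c+2}$ in a way that obstructs the induction. In fact the cleanest route is to work modulo $\gamma_{c+2}(G)$ and use that, modulo this term, the commutator $[x_1,\ldots,x_{c+1}]$ is multilinear in its arguments; writing each $x_i$ as a word in $S$ and using multilinearity shows the general commutator is a product of commutators of generators (each trivial by hypothesis), whence $\gamma_{c+1}(G)\le\gamma_{c+2}(G)$, and iterating (or using finiteness and nilpotence of the relevant section) forces $\gamma_{c+1}(G)=1$. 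Making the symmetry of $S$ do exactly the work of supplying inverses in this expansion is the delicate point, but it is precisely why the definition of gauge uses symmetric generating sets.
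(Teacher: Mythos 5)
Your treatment of $\mathbf{A}$ and $\mathbf{A}_m$ is fine and is exactly what the paper does. For $\mathbf{N}_c$ you take a different route (via generation of the lower central series), and it has a genuine gap at the final step. The lemma you invoke --- that $\gamma_{c+1}(G)$ is generated by the left-normed commutators $[s_1,\ldots,s_{c+1}]$ with the $s_i$ ranging over a generating set --- is false as stated: already for $F$ free of rank $2$, the derived group $F'$ is not finitely generated, so it cannot be generated by the finitely many weight-$2$ commutators in the generators and their inverses. The correct multilinearity argument gives generation only \emph{modulo} $\gamma_{c+2}(G)$, as you acknowledge; but from there you conclude $\gamma_{c+1}(G)\le\gamma_{c+2}(G)$ and assert that ``iterating'' forces $\gamma_{c+1}(G)=1$. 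It does not: iterating yields $\gamma_{c+1}(G)=\gamma_{c+2}(G)=\cdots$, i.e.\ $\gamma_{c+1}(G)$ equals the nilpotent residual, which need not vanish even for finite groups ($S_3$ has $\gamma_2=\gamma_3=\cdots=A_3\ne1$). The parenthetical appeal to ``finiteness and nilpotence of the relevant section'' is circular, since nilpotence of $G$ is precisely what is being proved.

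The gap is fixable in two ways. The true form of your lemma is: for a \emph{symmetric} generating set $S$, the subgroup generated by all left-normed commutators of weight $\ge k$ (not just weight exactly $k$) in elements of $S$ is normal (conjugating a weight-$j$ commutator by $s\in S$ multiplies it by one of weight $j+1$) and equals $\gamma_k(G)$. Since $[1,s]=1$, vanishing of the weight-$(c+1)$ commutators in $S$ propagates to all higher weights, so $\gamma_{c+1}(G)=1$ outright. The paper instead avoids any generation lemma and argues by induction on $c$: the hypothesis $[s_1,\ldots,s_c,s]=1$ for all $s\in S$ says each $[s_1,\ldots,s_c]$ commutes with a generating set, hence lies in $Z(G)$; so $G/Z(G)$, with generating set $SZ(G)/Z(G)$, satisfies the weight-$c$ hypothesis and is nilpotent of class at most $c-1$ by induction, whence $G$ has class at most $c$. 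Either repair works; the paper's is shorter and sidesteps the delicate point about inverses that you correctly flag.
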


\begin{proof}
If the generators of $G$ commute, then $G$ is abelian; if in addition the
generators have order dividing $m$, then $G$ has exponent dividing $m$.

The variety $\mathbf{N}_c$ is defined by the identity
$[x_1,x_2,\ldots,x_{c+1}]=1$, where the commutator is left-normed,
defined inductively by
\[[x_1,\ldots,x_{k+1}]=[[x_1,\ldots,x_k],x_{k+1}]\]
for $k\ge2$.  The proof is by induction on $c$, the first part of the
proposition giving the case $c=1$.

So let $G$ be a group with symmetric generating set $S$ satisfying the
identity $[x_1,x_2,\ldots,x_{c+1}]=1$. This identity
shows that the element $[x_1,\ldots,x_c]$, for $x_1,\ldots,x_c\in S$, commutes
with every generator, and so belongs to $Z(G)$. This shows that $G/Z(G)$
with generating set $\overline{S}=SZ(G)/Z(G)$ has the property that all
generators satisfy $[x_1,\ldots,x_c]=1$; by induction, $G/Z(G)$ is nilpotent
of class at most $c-1$, so $G$ is nilpotent of class at most $c$. \qed
\end{proof}

\begin{prop}
The identity $x^2=1$ has gauge $2$ (and not $1$).
\end{prop}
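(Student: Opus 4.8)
The plan is to verify the two halves of the claim separately: that $x^2=1$ has gauge at most $2$, and that it does not have gauge $1$.

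For the upper bound, I would take a symmetric generating set $S$ of a finite group $G$ and assume that $x^2=1$ holds throughout $B_2(S)=\{1\}\cup S\cup S^2$. In particular every generator $s\in S$ is an involution, so $s^{-1}=s$; and every product $s_1s_2$ with $s_1,s_2\in S$ satisfies $(s_1s_2)^2=1$, that is, $s_1s_2s_1s_2=1$. Substituting $s_1^{-1}=s_1$ and $s_2^{-1}=s_2$ rearranges this to $s_1s_2=s_2s_1$, so the generators commute pairwise. Hence $G=\langle S\rangle$ is abelian and generated by elements of order dividing $2$, whence every element of $G$ is a product of commuting involutions and satisfies $x^2=1$. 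This establishes gauge at most $2$.

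For the lower bound I would exhibit a finite group witnessing that gauge $1$ fails, i.e.\ one in which $x^2=1$ holds on $B_1(S)=\{1\}\cup S$ but not on all of $G$. Take $G=S_3$ with $S=\{a,b\}$, where $a=(1\,2)$ and $b=(2\,3)$. Each generator equals its own inverse, so $S$ is symmetric, and each is an involution, so $x^2=1$ holds on $B_1(S)$. However $ab=(1\,2\,3)$ has order $3$, so $x^2=1$ fails in $G$. Combined with the previous paragraph, the gauge is exactly $2$.

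Both steps are short once the definitions are unwound, so there is no serious obstacle; the single point deserving attention is that the commutativity of the generators in the upper-bound argument genuinely needs the involution relations coming from $B_1(S)$, and would fail if one only knew $(s_1s_2)^2=1$. This is why the ball of radius $2$, and not merely the set $S^2$, is the right hypothesis.
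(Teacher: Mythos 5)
Your proof is correct and follows essentially the same route as the paper: the key observation that for involutions $x,y$ the relation $(xy)^2=1$ is equivalent to $xy=yx$ gives the upper bound, and a non-abelian group generated by involutions (you give $S_3$ explicitly; the paper merely notes such groups exist) gives the lower bound.
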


\begin{proof}
If $x^2=y^2=1$, then $(xy)^2=1$ if and only if $x$ and $y$ commute. So, if
all generators and their pairwise products have order~$2$, then all pairs of
generators commute, and $G$ is abelian of exponent $2$. 
But of course there
are non-abelian groups generated by elements of order $2$.
Moreover, it is straightforward to verify that if $g^2=1$ for every $g \in B_2(S)$, then $g^2=1$ for every $g \in G$.
\qed
\end{proof}

In the other direction, we have the following:

\begin{prop}
The variety of metabelian groups has infinite gauge.
\end{prop}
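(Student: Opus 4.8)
The plan is to show directly that the gauge exceeds every bound: for each $k$ I will produce a finite, non-metabelian group $G$ with a symmetric generating set $S$ such that the metabelian law $[[x,y],[z,w]]=1$ holds for all $x,y,z,w\in B_k(S)$. Since the metabelian variety is defined by this single law, such a group forces its gauge to be $>k$, and as $k$ is arbitrary the gauge is infinite. The key idea is to arrange that the failure of metabelianity can only be witnessed by \emph{long} words, by spreading the non-commuting commutators far apart in the Cayley graph.

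Concretely, I fix a large integer $D$ (to be taken $\gg k$) and a modulus $m>2D$. Let $B$ be the class-$2$, exponent-$\le 4$ finite $2$-group with generators $x_0,\dots,x_{m-1}$, a central involution $z$, and relations $x_i^2=1$, $[x_i,x_j]=z^{\omega(i,j)}$, where $\omega(i,j)=1$ exactly when $i-j\equiv\pm D\pmod m$ (a circulant alternating form over $\mathbb{F}_2$). Let $t$ act on $B$ by the index shift $x_i\mapsto x_{i+1}$, and set $G=B\rtimes\langle t\rangle$ with $t^m=1$. I take the symmetric generating set $S=\{x_0,t,t^{-1}\}$; since $x_0^2=1$ and $t^{-1}\in S$ it is symmetric, and conjugating $x_0$ by powers of $t$ recovers every $x_i$, so $S$ generates $G$.

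First I would check that $G$ is not metabelian. The quotient $\overline{G'}=G'/\langle z\rangle$ is the ``sum-zero'' subspace of $B/\langle z\rangle=\mathbb{F}_2^m$ spanned by the vectors $\bar x_i+\bar x_{i+1}=\overline{[x_i,t]}$, and a short computation with the form shows $[x_jx_{j+1},\,x_{j+D-1}x_{j+D}]=z\ne 1$; hence $G''=\langle z\rangle\ne 1$. The two commutators involved live at indices a distance $\approx D$ apart, so this witness genuinely requires words of length $\Omega(D)$.

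The heart of the argument is the verification of the law on $B_k(S)$. I would introduce the \emph{support} of an element of $G$ as the set of indices $i$ with $\bar x_i\ne 0$ in its $B$-part. Reading a word of length $\le k$ while tracking the running $t$-exponent (which stays in $[-k,k]$) shows that every element of $B_k(S)$ has support and $t$-exponent contained in $[-k,k]$. Since $G''=\langle z\rangle$, any value $[[a,b],[c,d]]$ lies in $\langle z\rangle$ and equals $z^{\omega(\overline{[a,b]},\overline{[c,d]})}$; forming the two inner commutators shifts supports by at most $k$, so for $a,b,c,d\in B_k(S)$ both $\overline{[a,b]}$ and $\overline{[c,d]}$ are supported inside $[-2k,2k]$. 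Then $\omega$ pairs only indices at distance $\le 4k$, and choosing $D>4k$ makes every such $\omega$-value vanish; hence $[[a,b],[c,d]]=1$. I expect this support-tracking step — confirming that bounded word length forces bounded index-support, so that the distance-$D$ form is invisible inside the ball — to be the main technical obstacle, the rest being routine commutator calculus in a class-$2$ base group.
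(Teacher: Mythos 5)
Your proof is correct, and it takes a genuinely different route from the paper's. The paper works with the infinite group $R\wr\langle x\rangle$ ($R$ a $2$-generated non-abelian finite simple group, $x$ of infinite order), takes the symmetric generating set $\{a^{\pm1},c^{\pm1},x^{\pm1}\}$ with $c=b^{x^N}$ pushed far along the top group, and proves by induction on word length that the ball of radius $t$ meets the base group inside the abelian subgroup $\langle A_{\lfloor t/2\rfloor},C_{\lfloor t/2\rfloor}\rangle$, so that all commutators of elements of $B_n(S)$ commute while $G''\supseteq E'\neq1$. You instead build, for each $k$, a \emph{finite} class-$3$ $2$-group: a central extension of $\mathbb{F}_2^m$ encoding a shift-invariant alternating form $\omega$ supported only at index-distance $D$, extended by the cyclic shift, and you observe that bounded word length in $S=\{x_0,t,t^{-1}\}$ confines supports to a window of width $O(k)$, so for $D\gg k$ the form (hence $G''$) is invisible inside $B_k(S)$ while $[x_jx_{j+1},x_{j+D-1}x_{j+D}]=z$ witnesses $G''\neq1$. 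Both arguments share the same underlying idea --- separate the two halves of the non-metabelian witness by a distance proportional to the desired gauge --- but yours has the advantage of producing finite (indeed finite nilpotent $2$-group) counterexamples, which is what the paper's own definition of the gauge of an \emph{identity} formally quantifies over, and it avoids any appeal to simple groups or to the structure theory of wreath products. One small constant to fix: elements of $B_k(S)$ have $B$-part supported in $[-k,k]$, but the inner commutators $[a,b]$ with $a,b\in B_k(S)$ lie in $B_{4k}(S)$, so their supports sit in $[-4k,4k]$ and the relevant index differences can reach $8k$; you should therefore take $D>8k$ rather than $D>4k$. Since $D$ is a free parameter chosen after $k$, this changes nothing in the argument.
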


\begin{proof}
Let $R = \langle a, b\rangle$ be a non-abelian simple group, and consider the restricted wreath product $G = R\wr\langle x\rangle$, where $x$ is an element of infinite order. Denote by $E$ the base of the wreath product (i.e. the direct sum
of all coordinate subgroups $R^{x^z}$ for $z\in \mathbb{Z}$); thus
$E = G'$ and $G$ is the semidirect product $E\rtimes\langle x\rangle$.

Let $n$ be a positive integer, and $N = 4n+1$.  Let $c = b^{x^N}$; then \[S = \{a, a^{-1}, c, c^{-1}, x, x^{-1}\}\] is a symmetric set of generators of $G$. As introduced earlier, for $k\ge 1$ denote by $B_k = \bigcup_{i= 0}^{k}S^i$ the ball of radius $k$ and, for $g\in G$, by $\ell (g)$ the length of $g$ as a word in $S$, 
that is, its distance from the identity in the Cayley graph of $G$ with generating set $S$.

For each $0\le t$ let
\[A_t = \langle a^{(x^z)}\mid |z|\le t\rangle,\quad C_t = \langle c^{(x^z)}\mid  |z|\le t\rangle \ \ \mbox{and} \ \ W_t = \langle A_t, C_t\rangle.\]
Observe that $W_t^x\cup W_t^{x^{-1}}\subseteq W_{t+1}$.

\paragraph{Claim:} $B_t \cap E \subseteq W_{\lfloor t/2\rfloor}$. We prove this by induction on $t$, the fact being clear for $t=1$. Thus, let $t\ge 2$ and suppose $u = vy\in B_t\cap E$ with $v\in B_{t-1}$ and $y\in S$. If $y\in\{ a, a^{-1}, c, c^{-1}\}$, then $v\in B_{t-1}\cap E$ and we are done by induction. Let $u = vx$; then since $u\in E$ there is, in the writing of $v$ as a word of length $t-1$ in $S$, an occurrence of $x^{-1}$ somewhere; that is
\[u  = v_1x^{-1}v_2x,\]
with $v_1, v_2\in E$ and $1\le \ell (v_2) \le t-2$.  By inductive assumption, $v_2\in W_{\lfloor t/2\rfloor-1}$ and so $x^{-1}v_2x\in W_{\lfloor t/2\rfloor}$; since  $v_1 \in E\cap B_{t-3}$, we have $u \in W_{\lfloor t/2\rfloor}$, as wanted.

\medskip
Now, observe that $A_t, C_t$ are abelian for every $t$; moreover,when $t \le 2n$,  $A_t$ and $C_t$ have trivial intersection and commute element-wise, so that, for $t\le 2n$,
\[ W_t = \langle A_t, C_t\rangle = A_t\times C_t\]
is abelian.
Let finally $g_1,g_2, g_3, g_4\in B_n$; then $[g_1,g_2], [g_3,g_4]\in B_{4n}\cap E \subseteq W_{2n}$, and so
\[[[g_1,g_2], [g_3,g_4]] = 1.\]
Thus the metabelian identity holds in $B_n$ but not in the whole group, 
meaning that its gauge is greater than $n$. This holds for all $n$, so the
gauge is infinite. \qed
\end{proof}

However, a metabelian variety generated by a finite group may have finite
gauge. For example, consider the variety $\mathbf{V}$ generated by the
group $S_3$. It is known that $\mathbf{V}=\mathbf{A}_3\mathbf{A}_2$, and
as a basis for the identities we may take
\[x^6=[x^2,y^2]=[x,y]^3=[x^2,[y,z]]=[[x,y],[z,w]]=1.\]
We claim that this variety has gauge~$1$.
For if the generators of a group satisfy these identities, then their
squares and commutators commute and have order dividing~$3$, so generate a
group in $\mathbf{A}_3$; the quotient is in $\mathbf{A}_2$.

\bigskip

\subsection{Varieties with every group integrable}
\label{sec:integrable-varieties}

A possibly easier question \cite[Problem 10.15]{accm}, on which we have some
results, is the following.

\begin{question}
Is there a variety of groups, other than a variety of abelian groups, with
the property that every group in the variety is integrable?
\end{question}

The class $\mathbf{B}_p\cap\mathbf{N}_2$ is a candidate for prime $p$,
where $\mathbf{B}_p$ is the variety of groups of exponent $p$, and
$\mathbf{N}_2$ the variety of nilpotent groups of class at most $2$.
We prove that it does indeed have the required property.

\begin{theorem} Let $p$ be an odd prime. Then every group in the variety of groups of exponent $p$ and nilpotency class at most $2$ has an integral.
\end{theorem}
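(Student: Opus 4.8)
The plan is to show that every group $G$ in the variety $\mathbf{B}_p\cap\mathbf{N}_2$ (for $p$ an odd prime) admits an integral, by building a nilpotent integral of class at most $3$ directly from the structure of $G$. Since $G$ is nilpotent of class at most $2$ and has exponent $p$, its derived group $G'$ is central and elementary abelian, and the commutator map induces an alternating bilinear form $G/Z(G)\times G/Z(G)\to G'$. The key idea I would pursue is that such a group can be realized as the derived group of a slightly larger class-$3$ $p$-group obtained by a central extension / ``free nilpotent'' type construction over $\mathbb{F}_p$.

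Concretely, first I would reduce to the case where $G$ is (relatively) free in the variety, or at least handle $G$ as a quotient of the free group $F$ of $\mathbf{B}_p\cap\mathbf{N}_2$ on a suitable generating set. The plan is: take the free group $\tilde F$ of the variety $\mathbf{B}_p\cap\mathbf{N}_3$ (exponent $p$, class at most $3$) on the same generators, so that $\tilde F' $ surjects onto $G$; then one hopes to find a normal subgroup $N\le Z(\tilde F)$ with $N\le \tilde F'$ such that $(\tilde F/N)' \cong G$. Since we are working in a variety where, for $p$ odd, exponent and class-$3$ behave well (no interference from the prime $2$, so that the Baker--Campbell--Hausdorff / Lazard correspondence with a class-$3$ Lie algebra over $\mathbb{F}_p$ is available), I expect the commutator and power structure to be governed entirely by Lie-algebra data, and the whole problem translates into a statement about Lie algebras: realizing a given class-$2$ nilpotent $\mathbb{F}_p$-Lie algebra $\mathfrak g$ as the derived subalgebra $[\mathfrak h,\mathfrak h]$ of a class-$3$ nilpotent Lie algebra $\mathfrak h$.

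On the Lie side, the construction is natural: given $\mathfrak g$ with its bracket, one forms $\mathfrak h$ on the vector space $\mathfrak g\oplus V$ where $V$ is an auxiliary space mapping onto a generating set of $\mathfrak g$, with brackets defined so that $[\mathfrak h,\mathfrak h]=\mathfrak g$ and the original bracket on $\mathfrak g$ is recovered from iterated brackets of $V$. The free class-$3$ nilpotent Lie algebra on a generating set of $\mathfrak g$ surjects onto $\mathfrak g$ carrying the degree-$1$ part onto a spanning set and its derived subalgebra onto $\mathfrak g$; quotienting by a complement to a preimage of $\mathfrak g$ inside the degree-$2$ and degree-$3$ components (chosen central and inside the derived subalgebra, exactly as in the proof of Theorem~\ref{t:nilpotent-integral}) yields an $\mathfrak h$ with $[\mathfrak h,\mathfrak h]\cong\mathfrak g$. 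Translating back through the Lazard correspondence gives the desired integral $H$ of $G$.

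The hard part will be verifying that the quotient by $N$ does not accidentally collapse part of $G'$ — that is, that one can choose the normalizing subgroup $N$ to sit inside $Z(\tilde F)\cap \tilde F'$ and to intersect the copy of $G'$ trivially, so that $(\tilde F/N)'$ recovers $G$ exactly rather than a proper quotient. This is precisely the kind of ``$N\le Z(H)$, $N\le H'$, $N\cap G=1$'' bookkeeping carried out in the proofs of Theorem~\ref{t:nilpotent-integral} and Theorem~\ref{t:suf}, so I expect the mechanism to be the same, but the class-$3$ setting means one must track the degree-$2$ and degree-$3$ graded pieces separately and confirm that the alternating form defining $G$ is faithfully reproduced. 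The oddness of $p$ is essential here and I would flag exactly where it is used: it guarantees the power map is determined by the Lie structure (so exponent $p$ is preserved under the construction) and prevents the degeneracies that, for $p=2$, obstruct passing from class-$2$ to a class-$3$ integral.
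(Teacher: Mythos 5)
Your strategy cannot succeed, and the obstruction appears before any of the Lie-theoretic bookkeeping you propose to carry out. If $H$ is nilpotent of class at most $3$, then $[H',H']\le\gamma_4(H)=1$, so the derived subgroup of $H$ is \emph{abelian}; hence no non-abelian group $G$ of class exactly $2$ can be realised as $(\tilde F/N)'$ for any quotient of a class-$3$ group $\tilde F$. Raising the class does not rescue the plan either: the extraspecial group of order $p^3$ and exponent $p$ lies in $\mathbf{B}_p\cap\mathbf{N}_2$ and has cyclic centre, so by Burnside's theorem (Theorem~\ref{t:not-p-int}(a)) it is not $p$-integrable, and therefore (as recorded in the Remark after Theorem~\ref{p:int-p-gp}) it has \emph{no nilpotent integral of any class}. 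The object you set out to construct simply does not exist for some of the groups covered by the statement. A further, smaller point: the Lazard correspondence for class-$3$ groups and Lie algebras requires $p\ge5$, so even the translation step you rely on would fail at $p=3$, whereas the theorem is claimed for all odd primes.

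The paper's proof goes in the opposite direction and produces a deliberately non-nilpotent integral, using only the class-$2$ Malcev/Lazard correspondence (valid for every odd $p$). Writing $L_G=L_1\oplus L_2$ with $L_1$ spanned by a minimal generating set and $L_2=[L_G,L_G]$, the map $x\oplus z\mapsto(-x)\oplus z$ is a Lie algebra automorphism of order $2$, hence corresponds to a group automorphism $\alpha$ of $G$ of order $2$ that induces inversion on $G/G'$. Since $p$ is odd, $[G,\alpha]$ covers $G/G'$, so $H=G\rtimes\langle\alpha\rangle$ satisfies $H'=[G,\alpha]G'=G$. If you want to salvage your write-up, this is the mechanism to aim for: an automorphism of order coprime to $p$ acting fixed-point-freely modulo $G'$, not a central extension inside a variety of higher nilpotency class.
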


\begin{proof} Let $G$ be a group of exponent $p$, for $p$ an odd prime, and nilpotency class at most $2$. The set $G$ becomes a Lie $GF(p)$-algebra $L_G$ by setting, for $x,y\in G$,
\[ x + y = xy[x,y]^{\frac{1}{2}}\]
and letting the group commutator be the Lie product (this is essentially the simplest case of Malcev's correspondence)
where by $y^{\frac{1}{2}}$ we mean the preimage of the isomorphism $a \mapsto a^2$.

If $\{x_1, \ldots x_n\}$ is a minimal set of generators of the group $G$, then 
\[L_G = L_1 \oplus L_2\]
where $L_1$ is the $GF(p)$--space spanned by $x_1, \ldots, x_n$ and $L_2 = [L_G, L_G]$. The map
\[x\oplus z \mapsto (-x) \oplus z\]
(for $x\in L_1$, $z\in L_2$) is an automorphism of $L_G$, to which it corresponds an automorphism $\alpha $ of the group $G$ of order $2$. Thus, $\alpha$ induces the inversion map on $G/G'$, hence, letting $H = G\sdir\langle \alpha\rangle$, we have $H' =G$. \qed
\end{proof}

A different perspective, suggested by the proof of the result about the orders
for which every group is integrable \cite[Theorem 7.1]{accm}, is to ask whether
every group in the variety $\mathbf{A}_p\mathbf{A}_q$ is integrable, where 
$p$ and $q$ are primes with $q\nmid p-1$. On this, we can prove the following:

\begin{theorem}\label{le3} Let $p,q$ be distinct primes such that $p\nmid q-1$. 
Then every finite group in $\mathbf{A}_q\mathbf{A}_p$ has an integral.
\end{theorem}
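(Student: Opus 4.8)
The plan is to exhibit, for each such $G$, an automorphism realizing $G$ as the derived subgroup of a semidirect product $G\sdir\langle\gamma\rangle$. Since $G$ lies in $\mathbf{A}_q\mathbf{A}_p$, it has a normal elementary abelian $q$-subgroup $N$ with $G/N$ elementary abelian of exponent $p$; as $G$ is finite and $\gcd(q,p)=1$, Schur--Zassenhaus splits the extension, giving $G=N\sdir P$ with $P\cong G/N$ an elementary abelian $p$-group acting on $N$ by a representation $\rho\colon P\to\GL(N)$ over $\mathbb{F}_q$. The governing principle is that for a cyclic $\langle\gamma\rangle$ one has $(G\sdir\langle\gamma\rangle)'=G'[G,\gamma]$, so it suffices to find $\gamma\in\Aut(G)$ whose induced action on $G/G'$ satisfies $[G/G',\gamma]=G/G'$.

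First I would simplify $G/G'$. Because $G'=[N,P]$ and the action of $P$ on $N$ is coprime, Maschke's theorem gives $N=[N,P]\times C_N(P)$ with $C_N(P)\le Z(G)$; hence $G$ decomposes as a direct product $G=G_0\times C_N(P)$, where $G_0=[N,P]\sdir P$. Since $C_N(P)$ is abelian it is integrable (Theorem~\ref{t:nilpotent-integral}), and a direct product of integrable groups is integrable (derived subgroups multiply), so it is enough to integrate $G_0$. Writing $M=[N,P]$, the coprime decomposition forces $C_M(P)=1$ and $M=[M,P]=G_0'$, so that $G_0/G_0'\cong P$ is now a pure $p$-group; this is exactly the cancellation of the problematic $q$-part (which, when $q=2$, could not be handled by inversion), and it reduces the task to producing an automorphism of $G_0$ acting on $\bar P:=G_0/G_0'$ with $[\bar P,\gamma]=\bar P$.

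For the final step I would build $\gamma$ from the scalar $q$. Take $\gamma|_P$ to be the automorphism $p\mapsto p^{q}$ of $P$ (well defined since $\gcd(q,p)=1$), which on $\bar P\cong P$ acts as multiplication by $q$; as $p\nmid q-1$, the scalar $q-1$ is invertible modulo $p$, so $[\bar P,\gamma]=\bar P$ and therefore $(G_0\sdir\langle\gamma\rangle)'=G_0$, as required. The point where the hypothesis really does the work, and the main obstacle, is to extend $p\mapsto p^{q}$ to an honest automorphism of $G_0$: one needs $T\in\GL(M)$ with $T\rho(p)T^{-1}=\rho(p)^{q}$ for all $p$, i.e. an $\mathbb{F}_q[P]$-module isomorphism $M\cong M^{(q)}$ between $M$ and its $q$-power twist. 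This I would obtain by decomposing $M$ into $\mathbb{F}_q$-irreducibles and observing that the $q$-power twist is precisely the Frobenius twist whose orbits on $\bar{\mathbb{F}_q}$-characters define the $\mathbb{F}_q$-irreducible modules; hence every $\mathbb{F}_q[P]$-module is isomorphic to its $q$-twist, and such a $T$ exists. With $\gamma(mp)=T(m)p^{q}$ one then checks directly (using $T\rho(p)=\rho(p^{q})T$ and commutativity of $P$) that $\gamma$ is an automorphism, completing the construction.
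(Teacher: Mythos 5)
Your proof is correct, but it takes a genuinely different route from the paper's. The paper proves the theorem via a separate lemma, established by induction on $|G|$: writing $Q$ for the normal elementary abelian $q$-subgroup, it splits off $C_P(Q)$, reduces to the case where $Q$ is an indecomposable $\GF(q)P$-module (there $P$ is cyclic, $G$ embeds in the affine group of $\GF(q^m)$ with $m=\mathrm{ord}_p(q)$, and a Galois automorphism of order $m$ does the job), and handles decomposable $Q$ by gluing automorphisms of $G/Q_1$ and $G/Q_2$ along the embedding $G\hookrightarrow G/Q_1\times G/Q_2$. You instead build the required automorphism in one stroke and without induction: after the same coprime-action reduction $G=\bigl([N,P]\sdir P\bigr)\times C_N(P)$, you take $\gamma=(T,\sigma)$ where $\sigma\colon x\mapsto x^q$ on $P$ and $T\in\GL(M)$ is an $\mathbb{F}_q[P]$-isomorphism from $M=[N,P]$ to its $q$-power twist; such a $T$ exists because the Frobenius twist permutes the absolutely irreducible characters within their Galois orbits, so every semisimple $\mathbb{F}_q[P]$-module is isomorphic to its twist. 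Both arguments ultimately rest on the same Galois/Frobenius automorphism $x\mapsto x^q$, and the hypothesis $p\nmid q-1$ enters in both to guarantee that multiplication by $q$ is fixed-point-free on $P$ (equivalently $q-1$ is invertible mod $p$, so $[G_0/G_0',\gamma]=G_0/G_0'$). What the paper's version buys is a slightly more structured conclusion --- an automorphism of order exactly $m$ acting as a power on $G/Q$, stated as a reusable lemma; what yours buys is brevity and transparency, replacing the induction and the gluing step by the single module-theoretic observation $M\cong M^{(q)}$. The small verifications you gesture at (that $\gamma(mx)=T(m)x^q$ is an automorphism via $T\rho(x)=\rho(x^q)T$ and commutativity of $P$, and that $(G_0\sdir\langle\gamma\rangle)'=G_0'[G_0,\gamma]=G_0$) all go through.
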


We introduce the principal argument for the proof in a separate Lemma.

\begin{lemma}\label{le4} Let $p,q$ be distinct primes such that $p\nmid q-1$ and $m = ord_p(q)$. 
Let $G$ be a
finite group in $\mathbf{A}_q\mathbf{A}_p$, and let $Q$ be the largest normal $q$-subgroup of $G$ and suppose $Q\ne G$;  then there exists an automorphism $\alpha$ of $G$ of order $m$ which acts as a non-trivial power on $G/Q$.
\end{lemma}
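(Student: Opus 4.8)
The plan is to exploit the structure of groups in $\mathbf{A}_q\mathbf{A}_p$ to present $G$ as a split extension and then construct $\alpha$ explicitly on the two factors. First I would identify $Q$ precisely: since $G\in\mathbf{A}_q\mathbf{A}_p$ there is a normal elementary abelian $q$-subgroup $M$ with $G/M$ elementary abelian of exponent $p$; as $G/M$ is a $p$-group, every $q$-subgroup of $G$ lies in $M$, so $Q=M$ is elementary abelian $q$ and $P:=G/Q$ is elementary abelian $p$. Because $\gcd(|Q|,|P|)=1$, Schur--Zassenhaus yields a complement, so I may write $G=Q\rtimes P$, where $P$ acts on $Q$ by conjugation through a homomorphism $\theta\colon P\to\GL(Q)$.

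The idea is to let $\alpha$ induce the power map $x\mapsto x^q$ on $P=G/Q$ and a suitable automorphism $\beta$ on $Q$. Writing elements of $G$ as pairs $(a,x)$ with $a\in Q$, $x\in P$, a short computation shows that $\alpha\colon(a,x)\mapsto(\beta(a),x^q)$ is an endomorphism of $Q\rtimes P$ exactly when $\beta\,\theta(x)\,\beta^{-1}=\theta(x)^q$ for all $x\in P$ (using $\theta(x^q)=\theta(x)^q$). Thus the entire problem reduces to producing a $\beta\in\Aut(Q)$ that conjugates the image of $\theta$ to its $q$-th powers; once $\beta$ exists, $\alpha$ is automatically bijective and hence an automorphism.

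Constructing such a $\beta$ is where $m=\mathrm{ord}_p(q)$ and the coprimality enter, and it is the main obstacle. Viewing $Q$ as a $\GF(q)[P]$-module, coprime action (Maschke) makes it semisimple, and I would split $Q=C_Q(P)\oplus[Q,P]$ into its trivial part and the sum of nontrivial irreducibles. Since $P$ has exponent $p$, each nontrivial constituent corresponds to a character of order $p$, whose field of values $\GF(q)(\zeta_p)$ has degree exactly $m$ over $\GF(q)$; hence each such constituent is a copy of $\GF(q^m)$ on which $P$ acts by scalar multiplication via the character. On such a component the Frobenius $\phi\colon v\mapsto v^q$ is a $\GF(q)$-linear automorphism of order $m$ satisfying $\phi\,\theta(x)\,\phi^{-1}=\theta(x)^q$, precisely the relation required. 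I would therefore set $\beta$ equal to the identity on $C_Q(P)$ and to the Frobenius on each nontrivial component.

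Finally I would check the properties of $\alpha(a,x)=(\beta(a),x^q)$. On $P$ it acts as the scalar $q$, which has order $m$ in $\GF(p)^\times$; since $\beta^m=1$ on every component, $\alpha^m=1$, while the scalar forces the order of $\alpha$ to be a multiple of $m$, so $\alpha$ has order exactly $m$. The hypothesis $p\nmid q-1$ gives $q\not\equiv1\pmod p$, hence $m\ge2$ and the induced power map on $G/Q$ is a nontrivial power, completing the argument. The representation-theoretic step---recognising the nontrivial constituents as copies of $\GF(q^m)$ on which Frobenius realises the $q$-power conjugation---is the crux; the rest is bookkeeping on the split extension.
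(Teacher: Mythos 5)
Your proof is correct, but it takes a genuinely different route from the paper's. The paper argues by induction on $|G|$: it splits off $C_P(Q)$, treats the case where $Q$ is an indecomposable $\GF(q)P$-module by representing $G=QP$ inside the affine group of $\GF(q^m)$ and applying a Galois automorphism, and handles decomposable $Q$ by embedding $G$ subdirectly in $G/Q_1\times G/Q_2$ and gluing two inductively obtained automorphisms (after matching the powers they induce). You instead give a single direct construction: write $G=Q\rtimes P$ by Schur--Zassenhaus, reduce the existence of $\alpha\colon(a,x)\mapsto(\beta(a),x^q)$ to the intertwining relation $\beta\,\theta(x)\,\beta^{-1}=\theta(x)^q$, decompose $Q$ by Maschke into irreducibles, identify each nontrivial constituent with $\GF(q^m)$ acted on by $p$-th roots of unity, and take $\beta$ to be the Frobenius there and the identity on $C_Q(P)$. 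This is essentially the paper's indecomposable case promoted to a global argument, and it buys you a cleaner proof with no induction and no gluing step; it also hands you an $\alpha$ that already normalizes the complement $P$ and satisfies $[P,\alpha]=P$, which is exactly what the application in Theorem~\ref{le3} needs (the paper has to invoke a separate ``standard fact'' to arrange $\alpha(P)=P$). The verifications you flag as the crux --- that the enveloping algebra of an abelian exponent-$p$ group acting irreducibly over $\GF(q)$ is the field $\GF(q)(\zeta_p)=\GF(q^m)$, and that Frobenius conjugates multiplication by $\zeta$ to multiplication by $\zeta^q$ --- are exactly right, and the order computation ($\beta^m=1$, $q^m\equiv1\pmod p$, while $x\mapsto x^q$ already has order $m$ on the nontrivial group $G/Q$) is complete.
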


\begin{proof} We proceed by induction on $|G|$. Since $m\mid p-1$, the claim is obvious if $Q$ is trivial since the map $\alpha(g)= g^{\frac{1-p}{m}}$ fits the requirements.
Let $P$ be a Sylow $p$-subgroup of $G$. By assumption, $P$ is not trivial. 

Suppose $C = C_P(Q)\ne 1$. Then $P = C\times P_1$, where $P_1=[P,Q]$, and, by the inductive assumption,  $G_1= QP_1$ admits an automorphism $\alpha$ acting as a power of order $m$ on $G_1/Q$. Now, $G = C\times G_1$ and by letting $\alpha$  act on $C$ by the same power it acts on $G_1/Q$ we are done.
Thus, we now suppose $C_P(Q) = 1$.  

If $Q$ is indecomposable as $GF(q)P$-module, then $P$ is cyclic, $|Q| = q^m$, and  $G = QP$ may be represented as a group of affine transformations of the field $GF(q^m)$. 
(The order formula for the general linear group shows that the group of
affine transformations of $GF(q^m)$ contains a $p$-group whose order is the
$p$-part of $\mathrm{GL}(m,q)$. Thus $G$ is conjugate to a subgroup of this
affine group.) Then a Galois automorphism of order $m$ induces an automorphism
of $G$ that acts as a non-trivial power on $G/Q\cong P$.

Now, suppose $Q = Q_1\times Q_2$, with $Q_1, Q_2$ non-trivial normal subgroups of $G$. For $i = 1,2$, let $G_i = G/Q_i$. By inductive assumption, each $G_i$ admits an automorphism $\sigma_i$ of order $m$ acting as a power on $G_i$ modulo $Q/Q_i$. By possibly replacing $\sigma_2$ with one of its powers, we have that $\sigma_1, \sigma_2$ induce the same power on the appropriate quotients. Then $\sigma = (\sigma_1, \sigma_2)\in\Aut(G_1\times G_2)$ acts as a power automorphism on $G_1\times G_2$ modulo its largest (normal) $q$-subgroup $Q_0$. Now, we have a natural injective homomorphism $\pi : G\to G_1\times G_2$, and $\pi (G) > \pi (Q) = Q_0$. Since $\sigma$ acts as a power on $(G_1\times G_2)/Q_0$, it in particular fixes $\pi (G)$. Hence $\sigma$ induces an automorphism of $G$ of order $m$ that acts as a power on $G/Q$. \qed
\end{proof}

\begin{proof}\textbf{of Theorem \ref{le3}}
Let $G = QP$ be a finite group in $\mathbf{A}_q\mathbf{A}_p$, where $Q$ is a normal (elementary abelian) $q$-subgroup and $P$ a Sylow $p$-subgroup of $G$. 
As, by coprime action, $Q = C_Q(P)\times [Q,P]$, we have $G = C_Q(P) \times [Q,P]P$. Hence we may well assume $Q = [Q,P]$,
since $C_Q(P)$ is an abelian direct factor and so it is integrable.

Let $m = ord_p(q)$. By Lemma \ref{le4}, $G$ admits an automorphism $\alpha$ of order $m$ acting as a non-trivial power on $G/Q$. By a standard fact, 
we may assume that $\alpha(P)=P$, so that $[P,\alpha] = P$. Let $H = G\sdir\langle \alpha\rangle$, so that clearly $H' \le G$.  Then
\[ H' \ge [Q,P][P,\alpha]  = QP = G.\]
and we are done. \qed
\end{proof}


\section{Self-integrating classes of groups}
\label{sec:self-integrating}

We now consider integrals within certain classes of groups. This section is related to Problem 10.15 of \cite{accm}. 

Let $\mathcal{C}$ be a class of groups. (We use this phrase to mean that
$\mathcal{C}$ is isomorphism-closed.) The strongest property we might
require is to ask the following: if a group $G\in\mathcal{C}$ is integrable,
then every integral of $G$ is in $\mathcal{C}$. It is reasonable to require
that $\mathcal{C}$ is subgroup-closed, otherwise there will be uninteresting
examples such as the class of non-abelian groups. In this case, $\mathcal{C}$
contains the trivial group, and hence all abelian groups, and hence (by
induction) all soluble groups. But certainly the class of all soluble groups
has our properties. Again we then get uninteresting examples such as groups
which have at most one nonabelian composition factor, this factor being
$A_5$.

A more sensible definition is the following. We say that a class $\mathcal{C}$
of groups is \emph{self-integrating} if, whenever $G$ is an integrable group in
$\mathcal{C}$, then $G$ has an integral in $\mathcal{C}$.

We saw in~\cite{accm} that the class of finite groups, and the class of
finitely generated groups, are both self-integrating.

Obviously, the class of soluble groups is self-integrable, as well as every class which is closed by extensions by abelian groups (e.g. the class of amenable groups). 
However, not everything is so trivial.

\begin{lemma} The following classes of groups are self-integrating.
\begin{enumerate}
\item\label{11} finite groups;
\item\label{12} finitely generated groups;
\item\label{13} polycyclic groups, and more generally groups satisfying Max;
\item\label{14} finitely generated, residually finite groups.
\end{enumerate}
\end{lemma}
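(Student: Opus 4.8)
The plan is to prove each of the four classes is self-integrating by exhibiting, for an integrable group $G$ in the class, a specific integral that stays in the class. Parts \eqref{11} and \eqref{12} are stated to be already known from~\cite{accm}, so the real work is in parts \eqref{13} and \eqref{14}. The unifying observation is that if $G$ is integrable and we pick any integral $H$ with $H'\cong G$, then setting $K=C_H(G)$ we have $H/K\hookrightarrow\Aut(G)$ and $K\cap G=Z(G)$, exactly as in the proof of Theorem~\ref{t:nec}. The difficulty is that a given $H$ need not lie in $\mathcal{C}$; the task is to manufacture a better one.

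For part \eqref{13}, my first step would be to recall that ``Max'' (the maximal condition on subgroups) is equivalent to every subgroup being finitely generated, and that this class is closed under subgroups, quotients, and extensions. Since $G$ satisfies Max and is finitely generated, I would invoke part \eqref{12}: $G$ has a finitely generated integral $H$. The key point is then that a finitely generated group need not satisfy Max, so I must reduce $H$ to something smaller. Here I would use the structure $K=C_H(G)$ with $H/K\le\Aut(G)$: since $G$ satisfies Max, $\Aut(G)$ is well-behaved, and I would argue that $Z(H)\le K$ together with the homomorphisms $\phi_i\colon K\to Z(G)$, $x\mapsto[x,h_i]$ from Step~2 of Theorem~\ref{t:suf} forces $K/Z(H)$ to be finitely generated; combined with $Z(G)$ finitely generated this should let me control $K$ and conclude $H$ satisfies Max. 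The polycyclic case then follows because polycyclic equals soluble-plus-Max, and solubility of $H$ follows since $H/G$ is abelian and $G$ is polycyclic hence soluble.

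For part \eqref{14}, the plan is to combine finite generation with residual finiteness. Given $G$ finitely generated and residually finite, I would again take a finitely generated integral $H$ from part \eqref{12}, so finite generation is automatic; the issue is residual finiteness. The natural route is to realize $H$ as an extension of the abelian group $H/G$ by $G$ and show residual finiteness is inherited. I expect the cleanest argument uses that $H$ embeds into a suitable (restricted or permutational) wreath-product-type construction, or that a central-by-(residually finite) or abelian-by-(residually finite) finitely generated group is residually finite; the key external fact I would lean on is that finitely generated abelian-by-(residually finite) groups, or more safely finitely generated groups that are extensions with residually finite quotient and a residually-finite-friendly kernel, remain residually finite.

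The main obstacle, I expect, is precisely residual finiteness in \eqref{14}: unlike Max, residual finiteness is not preserved under arbitrary extensions, so I cannot simply quote closure properties. The crux will be to show that the particular integral $H$ I build is residually finite, presumably by choosing $H$ explicitly (for instance as a subgroup of $G\wr A$ for a finite abelian $A$, exploiting that $G$ residually finite and $A$ finite give a residually finite wreath product, whose derived group recovers $G$), rather than by starting from an arbitrary integral. Getting the derived subgroup of such an explicit construction to be isomorphic to $G$ while keeping the whole group finitely generated and residually finite is the delicate step; once the construction is pinned down, verifying membership in $\mathcal{C}$ should be routine.
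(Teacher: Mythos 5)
Your plans for parts (c) and (d) both miss the arguments that actually work. For part (c) (groups with Max), the reduction you describe --- controlling $K=C_H(G)$ and $K/Z(H)$ via the maps $\phi_i$ --- is never completed and is in any case unnecessary: once part (b) provides a finitely generated integral $H$ of $G$, the quotient $H/G=H/H'$ is a finitely generated abelian group and hence satisfies Max, $G$ satisfies Max by hypothesis, and the class of groups with Max is closed under extensions, so $H$ itself satisfies Max with no further modification (and is polycyclic when $G$ is, since $H$ is then soluble). There is no need to ``reduce $H$ to something smaller''; this is why the paper disposes of (c) in one line as an easy consequence of (b).

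For part (d) the gap is more serious, and neither of your proposed routes can be repaired. The wreath-product construction fails because for non-abelian $G$ the derived subgroup of $G\wr A$ is not isomorphic to $G$ (Guralnick's $A\wr C_2$ trick is special to abelian $A$); and the closure property you hope to quote --- that a finitely generated extension with residually finite kernel and abelian quotient is again residually finite --- is not a theorem you can lean on: residual finiteness is not preserved by such extensions in general, and establishing it for \emph{some} integral is exactly the content of the lemma. The paper's argument is of a different shape: take a finitely generated integral $H$ of $G$, and for each $n$ let $K_n$ be the intersection of all subgroups of $G$ of index at most $n$ (characteristic, of finite index, with $\bigcap_n K_n=1$ by residual finiteness). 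Then $(H/K_n)'=G/K_n$ is finite, so by the converse-Schur result (a finitely generated group with finite derived subgroup has centre of finite index) the centre $Z_n/K_n$ of $H/K_n$ has finite index; taking $C_n/K_n$ to be a complement to the torsion subgroup of $Z_n/K_n$ yields a normal subgroup $C_n$ of finite index in $H$ with $C_n\cap G=K_n$. Setting $C=\bigcap_n C_n$ gives $C\cap G=1$, so $H/C$ is a finitely generated, residually finite integral of $G$. Some version of this ``quotient by a carefully chosen intersection of finite-index normal subgroups'' step is indispensable, and it is absent from your proposal.
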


\begin{proof}
(\ref{11}) and (\ref{12}) are true by \cite[Theorem 2.2, Proposition 9.1]{accm}.
As (\ref{13}) follows easily from (\ref{12}), we only consider (\ref{14}).

Thus, let $G$ be a finitely generated residually finite group.  For every $n\ge 1$, let  $K_n$ be the intersection of all subgroups of $G$ of index at most $n$; then, each $K_n$ is characteristic. Moreover, since $G$ is finitely generated, $G/K_n$ is finite for every $n$, and, $\bigcap_{n\ge 1}K_n = 1$, because $G$ is residually finite. 

Now, suppose that  $G$ has an integral $H$, which, by point (\ref{12}), we may suppose is finitely generated. For every $n\ge 1$, the commutator subgroup of $H/K_n$ is finite; hence, since $H/K_n$ is finitely generated, $Z_n/K_n = Z(H/K_n)$ has finite index in $H/K_n$ (see for instance exercise 14.5.7 in Robinson's book). Also, $Z_n/K_n$ is a finitely generated abelian group, and so there exists a subgroup $C_n/K_n$ of $Z_n/K_n$ with $[Z_n:C_n]$ finite and $C_n\cap G=K_n$. 
For take $C_n/K_n$ to be a complement for the torsion
subgroup of $Z_n/K_n$, noting that $G/K_n$ is contained in this torsion
subgroup since it is finite.
Observe that $C_n\nor H$ and that $[H:C_n]$ is finite. Setting $C = \bigcap_{n\ge 1}C_n$, we have $C\nor H$ and 
\[C\cap G = \bigcap_{n\ge 1}(C_n\cap G) = \bigcap_{n\ge 1}K_n = 1.\]
Thus, $(H/C)' = GC/C \cong G$, and we are done since $H/C$ is residually finite (and finitely generated). \qed
\end{proof}

\begin{question}
Which other ``natural'' classes of groups are self-integrating?
For instance: periodic groups, torsion--free groups, linear groups, residually finite groups in general, 
virtually free groups, \dots
\end{question}

We show in this paper that the class of finite $p$-groups, and the class of
residually finite groups, are both not self-integrating
(Theorem~\ref{t:not-p-int} for $p$-groups, Lemma~\ref{lemcar1} and
Proposition~\ref{procar2} below for residually finite groups).


\section{Profinite groups and Cartesian products}
\label{sec:profinite}

\subsection{Profinite and abstract integrals}
\label{ss:profabs}

 Let $G$ be a compact topological group.
 Recall that $G$ is said to be profinite if the following equivalent conditions are satisfied (cf. \cite[Lemma~2.1.1 and Theorem~2.1.3]{RibZal:book}):
  \begin{itemize}
  \item[(i)] there exists an inverse system $\{G_i, \varphi_{ij}\colon G_i\to G_j\:\mid\:i,j\in I,i>j\}$ of finite groups such that $G=\varprojlim_{i\in I}G_i$, and $\{\ker(\varphi_i)\mid i\in I\}$ is a basis of open neighbourhoods of the identity, where $\varphi_i\colon G\to G_i$ denotes the canonical epimorphism for every $i\in I$;
  \item[(ii)] there exists a basis of open neighbourhoods of the identity $\mathcal{U}$ consisting of normal subgroups of $G$, such that $G=\varprojlim_{N\in\mathcal{U}}G/N$.
 \end{itemize}
 (For the definition of inverse system and projective limit, see \cite[\S~1.1]{RibZal:book}.)
Observe that a subgroup of a compact topological group is open if, and only if, it is closed and of finite index (cf., e.g., \cite[Lemma~2.1.2]{RibZal:book}).
For a profinite group $G$ and a subset $X\subseteq G$, $\overline X$ will denote the closure of $X$.

There are two notions of derived group in the class of
profinite groups: either the abstract (the subgroup generated by commutators)
or the topological (the closure of the preceding). 
We say that a profinite group $G$ has a profinite integral $K$ if $K$ is a profinite group and $G$ is the topological derived subgroup $\overline{K'}$.

We show that a profinite group which has finite index in some integral has a
profinite integral, and that a finitely generated profinite group which has an
integral has a profinite integral. However, in general it is not true that an
integrable profinite group has a profinite integral (see Theorem \ref{teoprof1},
Lemma \ref{lemcar1} and Proposition \ref{procar2}).

We begin with a known remark that we will use throughout this section.

\begin{remark}
\label{rem:nikolov-segal}
Let $G$ be a topologically finitely generated profinite group.
(Henceforth, ``finitely generated profinite group'' will be intended in the topological sense.)
By a remarkable result of Nikolov and Segal (cf. \cite{ns1}), $G$ boasts the following properties:
\begin{itemize}
 \item[(i)] the abstract derived subgroup of $G$ is closed, i.e., $\overline{G'}=G'$;
 \item[(ii)] all the subgroups of finite index of $G$ are open, and there are only finitely many of them of a given index.
\end{itemize}
For every positive integer $n$, let $G(n)$ denote the intersection of all the subgroups of $G$ of index at most $n$. 
From property~(ii), one deduces that $G(n)$ is a closed characteristic subgroup of finite index --- and thus $G(n)$ is also open. 
It is straightforward to see that for every ascending sequence of positive integers $i_1<i_2<\ldots<i_n<\ldots$, 
the family $\mathcal{U}=\{G(i_n)\mid n\geq1\}$ is a basis of open neighbourhoods of the identity consisting of normal subgroups of $G$, and thus $$G=\varprojlim_{n\geq1}G/G(i_n).$$
We use this notation
throughout the present section.
\end{remark}

\begin{prop}\label{prop:profiniteintegral}
 Let $(I,\leq)$ be a directed set, and let $\{G_i,\varphi_{ij}\mid i,j\in I\}$ be an inverse system of finite groups with associated profinite group $G=\varprojlim_{i}G_i$.
 Suppose that there exists an inverse system of finite groups $\{K_i,\psi_{ij}\mid i,j\in I\}$, with associated profinite group $K=\varprojlim_{i}K_i$, such that one has an isomorphism $\tau_i\colon G_i\to K_i'$ for every $i\in I$.
 Then $G\simeq \overline{K'}$.
\end{prop}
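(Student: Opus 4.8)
The plan is to compute the topological derived subgroup $\overline{K'}$ explicitly as an inverse limit of the finite derived subgroups $K_i'$, and then transport the isomorphisms $\tau_i$ to the limits. Throughout I assume, as is standard when a profinite group is presented as an inverse limit of finite groups, that both systems are \emph{surjective}: the canonical projections $\varphi_i\colon G\to G_i$ and $\psi_i\colon K\to K_i$, and all transition maps, are onto. This is in any case what makes the hypothesis $G_i\cong K_i'$ meaningful, since otherwise $\psi_i(K')$ would be the derived group of $\psi_i(K)$ rather than of $K_i$.

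The heart of the argument is the identification
\[\overline{K'}=\{k\in K\mid \psi_i(k)\in K_i'\text{ for all }i\in I\},\]
that is, $\overline{K'}=\varprojlim_i\{K_i',\ \psi_{ij}|_{K_i'}\}$ (by surjectivity $\psi_{ij}(K_i')=K_j'$, so this really is an inverse subsystem). For the inclusion $\subseteq$ I would note that each $\psi_i$ carries commutators to commutators, so $\psi_i(K')\subseteq K_i'$; since $K_i$ is finite, hence discrete, $K_i'$ is closed, and $\psi_i^{-1}(K_i')$ is a closed subgroup of $K$ containing $K'$, whence $\overline{K'}\subseteq \bigcap_i\psi_i^{-1}(K_i')$. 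The reverse inclusion is a density argument: if $\psi_i(k)\in K_i'$ for all $i$, then for each basic open neighbourhood $k\ker\psi_j$ of $k$, surjectivity gives $\psi_j(K')=K_j'\ni\psi_j(k)$, so some $x\in K'$ satisfies $\psi_j(x)=\psi_j(k)$; thus $K'$ meets every neighbourhood of $k$ and $k\in\overline{K'}$.

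With this in hand the remaining step is formal. I use that the family $(\tau_i)_{i\in I}$ is compatible with the transition maps, i.e.\ $\tau_j\circ\varphi_{ij}=\psi_{ij}|_{K_i'}\circ\tau_i$ for all $i\geq j$, so that it constitutes an isomorphism of inverse systems from $\{G_i,\varphi_{ij}\}$ to $\{K_i',\psi_{ij}|_{K_i'}\}$. It therefore induces a continuous isomorphism of profinite groups
\[\varprojlim_i G_i\ \longrightarrow\ \varprojlim_i K_i',\qquad (g_i)\mapsto(\tau_i(g_i)),\]
well defined by compatibility, with continuous inverse $(k_i)\mapsto(\tau_i^{-1}(k_i))$. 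Composing with the identification of the previous paragraph yields $G=\varprojlim_i G_i\simeq\varprojlim_i K_i'=\overline{K'}$, as required.

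I expect the main obstacle to be the identification $\overline{K'}=\varprojlim_i K_i'$, and specifically keeping the distinction between the \emph{abstract} and the \emph{topological} derived subgroup under control: the inclusion $\subseteq$ only sees the closed subgroups $\psi_i^{-1}(K_i')$, while $\supseteq$ genuinely requires the density/surjectivity argument above (note that $K$ need not be finitely generated, so one cannot invoke Nikolov--Segal to collapse the two notions). A secondary point to watch is the compatibility of the $\tau_i$ with the transition maps: if one only knew that each $\tau_i$ is an isomorphism of groups, the conclusion could fail, since inverse limits depend on the transition maps and not merely on the isomorphism types of the terms; hence the hypothesis should be read as supplying an isomorphism of the two inverse systems.
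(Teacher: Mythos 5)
Your proposal is correct and follows essentially the same route as the paper: both identify $\overline{K'}$ with the inverse limit of the $K_i'$ (the paper phrases the inclusion $\overline{K'}\subseteq\varprojlim_iK_i'$ via the abelianness of $K/\tau(G)$ rather than via commutators mapping to commutators, but this is the same containment), and both use the identical density argument, resting on $K_i'=(K'N_i)/N_i$, for the reverse inclusion. Your remark that the $\tau_i$ must be read as an isomorphism of inverse systems, not merely of individual terms, is exactly the compatibility the paper invokes implicitly when citing Ribes--Zalesski\u{\i} for the induced map on limits.
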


\begin{proof}
 The short exact sequences of finite groups 
 \[\xymatrix{ 1 \ar[r] & G_i\ar[r]^-{\tau_i} & K_i\ar[r] &  K_i/G_i\ar[r] &1 },\]
 for every $i\in I$, yield a monomorphism of profinite groups $\tau\colon G\to K$ such that $\tau(G)$ is a closed normal subgroup of $K$, and $\psi_i\circ\tau=\tau_i\circ\varphi_i$ for every $i\in I$ (cf. \cite[Proposition~2.2.4]{RibZal:book}). Moreover, $K/\tau(G)$ is an abelian profinite group, as every quotient $K_i/G_i$ is abelian.
 Therefore, one has the inclusion $\tau (G)\supseteq\overline{ K'}$.
 
 On the other hand, for every $i\in I$ let $N_i$ be the kernel of the canonical epimorphism $\psi_i\colon K\twoheadrightarrow K_i$.
 Then $\{N_i\:\mid\: i\in I\}$ is a basis of open neighbourhoods of the identity.
 Now pick an arbitrary element $x$ of $\tau(G)$, and an arbitrary open neighbourhood $U\subseteq K$ of $x$.
 Thus, there exists $j\in I$ such that the coset $xN_j$ --- which is an open neighbourhood of $x$ --- is contained in $U$.
Since the diagram
\[\xymatrix{ G\ar[d]_{\varphi_j}\ar[rr]^-{\tau} && K\ar[d]^{\psi_j} \\
  G_j\ar[r]^-{\tau_j} & K_j'\ar@{^{(}->}[r] & K_j }\]
commutes, one has $(\psi_j\circ\tau)(G)\subseteq K_j'$.
Up to rewriting the images of the homomorphism $\psi_j$ as cosets of $N_j$, it makes sense to write $xN_j\in K_j'$.
Since 
 \[
  K_i'=(K/N_i)'=(K'\cdot N_i)/N_i
 \]
for every $i\in I$, one has that $xN_j=hN_j$ for some $h\in K'$.
Therefore $h\in xN_j\subseteq U$ --- in other words, every element of $\tau(G)$ is arbitrarily close to $K'$.
Hence, $\tau(G)\subseteq \overline{K'}$. \qed
\end{proof}

\begin{theorem}
\label{thm:finitely-integrable-profinite}
Let $G$ be a profinite group and $H$ an integral of $G$ (as group) with
$|H:G|$ finite. Then there exists a profinite group $K$ which is both a profinite integral and an abstract integral of $G$, i.e., $G=\overline{K'}=K'$.
\end{theorem}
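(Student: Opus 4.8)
The plan is to equip the abstract group $H$ with a profinite topology restricting to the given topology on $G=H'$, and then take $K$ to be $H$ with this topology. Since $|H:G|=t$ is finite, write $H=\bigcup_{i=1}^t Gh_i$. For every open normal subgroup $U\nor G$, the normality of $U$ in $G$ gives $U^{gh_i}=U^{h_i}$ for all $g\in G$, so $U$ has at most $t$ distinct $H$-conjugates; hence its $H$-core
\[U^\ast=\bigcap_{i=1}^t U^{h_i}\]
is a subgroup normal in $H$ and of finite index in $G$ (at most $|G:U|^t$). I would take the family $\{U^\ast\}$, as $U$ ranges over the open normal subgroups of $G$, as a prospective basis of open neighbourhoods of the identity.

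The key step is to show that each $U^\ast$ is open in $G$; equivalently, that conjugation by every $h\in H$ is a homeomorphism of $G$, so that each conjugate $U^{h_i}$ of the open subgroup $U$ is again open. Granting this, the subgroups $U^\ast$ are open and normal in $G$, normal of finite index in $H$, satisfy $U^\ast\le U$, and $\bigcap U^\ast\le\bigcap U=1$; thus they form a basis of neighbourhoods of $1$ cofinal with the open normal subgroups of $G$, and so they recover exactly the topology of $G$.

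Then set $K=\varprojlim_U H/U^\ast$. Each quotient $H/U^\ast$ is finite with $(H/U^\ast)'=H'U^\ast/U^\ast=G/U^\ast$, so $K$ is a profinite integral of $G$ by Proposition~\ref{prop:profiniteintegral}. Moreover the short exact sequences $1\to G/U^\ast\to H/U^\ast\to H/G\to1$ pass to the limit (as $\varprojlim^1$ vanishes for finite groups), giving $1\to G\to K\to H/G\to1$; the natural map $H\to K$ is injective (its kernel is $\bigcap U^\ast=1$) and surjective (its image contains the closed copy of $G$ and maps onto the finite group $H/G$), hence an isomorphism of abstract groups. Therefore $K\cong H$ as abstract groups, so $K'=H'=G$, and since $G$ is open, hence closed, in $K$ we also get $\overline{K'}=\overline G=G$. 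This yields $G=\overline{K'}=K'$.

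The main obstacle is precisely the openness of the conjugates $U^{h}$, that is, the continuity of conjugation. For this one would use that, writing $n$ for the order of $hG$ in $H/G$, the element $h^n$ lies in $G$, so $\phi_h^{\,n}$ is the inner automorphism induced by $h^n\in G$ and is continuous; the real difficulty is to deduce continuity of $\phi_h$ itself from continuity of a power together with the hypothesis $H'=G$. (Discontinuous finite-order abstract automorphisms of profinite groups do exist, so the argument must genuinely use that $G$ is the \emph{full} derived subgroup, which appears to forbid them.) I expect this is where the essential work lies; the remaining steps are formal, requiring only that $G$ be complete and of finite index in $H$.
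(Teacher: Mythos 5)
Your construction is, in essence, the paper's construction: the published proof likewise fixes a basis $\{N_i\}$ of open normal subgroups of $G$, forms the $H$-cores $K_i=\bigcap_{hG\in H/G}h^{-1}N_ih$, takes $K=\varprojlim_i H/K_i$, embeds $H$ into $K$ using $\bigcap_i K_i=1$, and concludes $G=K'=\overline{K'}$ by noting that $K/G$ is abelian while $G=H'\le K'$ (your appeal to Proposition~\ref{prop:profiniteintegral} and to the vanishing of $\varprojlim^1$ is a cosmetic variant of that endgame). The divergence is at the single point you flag: you stop at the openness of the conjugates $U^{h}$, whereas the paper dispatches it in one clause, asserting that $K_i$ ``is the intersection of a finite number of open subgroups of $G$, and thus it is open''. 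Measured against the paper, then, your proposal reproduces the entire argument except for the one step the paper asserts rather than proves; as a self-contained proof it is incomplete exactly where you say it is, since everything downstream (that the $U^\ast$ recover the topology of $G$, that $G\to\varprojlim_U G/U^\ast$ is onto so that $H\to K$ is onto, that $\overline{K'}$ carries the correct topology) requires the $U^\ast$ to be open, or at the very least closed.

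Your suspicion that continuity of conjugation does not follow formally from $H'=G$ and $|H:G|<\infty$ is well founded, so this is not a gap you could have closed by a routine continuity argument. For instance, on $V=\mathbb{F}_3^{\mathbb{N}}$ one can produce a \emph{discontinuous} $\mathbb{F}_3$-linear $\alpha$ with $\alpha^2=-1$ (conjugate the continuous ``multiplication by a square root of $-1$ in $\mathbb{F}_9$'' by a transvection $1+\phi(\cdot)v$ whose functional $\phi$ is discontinuous); then $\alpha^4=1$ and $\alpha-1$ is bijective, so $H=V\rtimes\langle\alpha\rangle$ satisfies $H'=[V,\alpha]=V$ and $|H:V|=4$, yet conjugation by $\alpha$ does not carry every open subgroup of $V$ to an open subgroup. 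Such examples do not threaten the theorem's conclusion ($V$ obviously has profinite integrals), but they show that completing the proof along these lines needs more than the hypotheses as used: one must either justify the openness of the cores in the situation at hand, first replace $H$ by a better-behaved integral, or argue that the inverse limit over possibly non-open cores still yields a profinite integral of $G$ with the right topology. Your proposal supplies none of these, and at this precise point it is no less rigorous, but also no more complete, than the argument printed in the paper.
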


\begin{proof}
 Pick a basis of open neighbourhoods of the identity $\mathcal{U}=\{N_i\mid i\in I\}$, with $(I,\leq)$ a directed set such that $N_i\leq N_j$ for every $i,j\in I$ such that $i\geq j$, consisting of normal subgroups of $G$, and set $G_i=G/N_i$. 
 Then $G_i$ is a finite group for every $i\in I$, and $G=\varprojlim_{i}G_i$.
 For every $i\in N$, set
 \[
  K_i:=\bigcap_{hG\in H/G}h^{-1}N_ih.
 \]
Since $H/G$ is finite by hypothesis, $K_i$ is the intersection of a finite number of open subgroups of $G$, and thus it is open.
Moreover, $K_i\leq K_j$ for every $i,j\in I$ such that $i\geq j$, and for every open subset $U$ of $G$ there exists $i\in I$ such that $U\supseteq N_i\supseteq K_i$.
Hence, $\mathcal{K}=\{K_i\mid i\in I\}$ is a basis of open neighbourhoods of the identity in $G$, and $G=\varprojlim_{i} G/K_i$.

 For every $i\in N$, one has that $[H:K_i]=[H:G][G:K_i]$ is finite.
 Thus, $\{H/K_i, H/K_i\twoheadrightarrow H/K_j\text{ for }K_i\leq K_j\}$ is an inverse system of finite groups, and we may define the profinite group $K=\varprojlim_{i}H/K_i$, which has $\mathcal{K}$ as a basis of open neighbourhoods of the identity consisting of normal subgroups.
 Since $\bigcap_{i\in I}K_i=\{1\}$, the definition of $K$ yields a monomorphism of groups $\phi\colon H\hookrightarrow K$, and hence $H$ may be considered as a subgroup of $K$, so that $$G=H'\leq K'.$$
Observe that $G$ is an open (and thus also closed) subgroup of $K$, as 
\[
 G=\bigcup_{gK_i\in G/K_i}gK_i\qquad\text{for each }i\in I
\]
and every $gK_i\in G/K_i$ is an open subset of $K$. 
 
 On the other hand, for every $i\in I$ let $\varphi_i\colon K\to H/K_i$ denote the canonical epimorphism. 
 Then $H/K_i\simeq K/\ker(\varphi_i)=K/K_i$, while $G/K_i=(H/K_i)'$ by hypothesis.
 Altogether,
 \[
  \dfrac{K}{G}\simeq \dfrac{K/K_i}{G/K_i}\simeq\dfrac{H/K_i}{(H/K_i)'} \qquad\text{ for each }i\in I,
 \]
hence $K/G$ is abelian. 
Consequently, $G$ contains $K'$, and thus also $\overline{K'}$, as $G$ is a closed subgroup of $K$.
Therefore, $G=K'=\overline{K'}$. \qed
\end{proof}

\begin{theorem}
\label{thm:profinite-fg-integral}
Let $G$ be a finitely generated profinite group which is integrable as
abstract group. Then there exists a finitely generated profinite group $K$ which is both a profinite integral and and abstract integral of $G$, i.e., $G=\overline{K'}=K'$.
\end{theorem}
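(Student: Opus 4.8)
The plan is to mimic the proof of Theorem~\ref{thm:finitely-integrable-profinite}, building $K$ as an inverse limit of finite integrals of the finite quotients of $G$ and then invoking Proposition~\ref{prop:profiniteintegral}. Let $H$ be an abstract integral of $G$, so $G\cong H'$, and let $d$ be the number of topological generators of $G$. For each $n$ the Nikolov--Segal subgroup $G(n)$ of Remark~\ref{rem:nikolov-segal} is characteristic in $G$, hence normal in $H$ (as $G\nor H$); thus $\bar H_n:=H/G(n)$ is an abstract integral of the finite $d$-generated group $\bar G_n:=G/G(n)$, and the $\bar G_n$ form the inverse system with $G=\varprojlim_n \bar G_n$. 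The aim is to produce finite integrals $K_n$ of $\bar G_n$ fitting into a compatible inverse system, so that $K=\varprojlim_n K_n$ satisfies $G\cong\overline{K'}$ by Proposition~\ref{prop:profiniteintegral}.

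To manufacture each $K_n$ I would first cut $\bar H_n$ down to a finitely generated integral of $\bar G_n$ using the reduction in Step~2 of the proof of Theorem~\ref{t:suf}: since $\bar G_n$ is generated by commutators, replace $\bar H_n$ by the subgroup generated by the elements appearing in a minimal generating set of $\bar G_n$ by commutators; this subgroup is finitely generated and still has derived group exactly $\bar G_n$. Its derived group being finite and the group finitely generated, its centre has finite index (by B.H.~Neumann) and is finitely generated abelian, so it contains a torsion-free central subgroup of finite index meeting $\bar G_n$ trivially; quotienting by it yields a finite group $K_n$ with $K_n'\cong\bar G_n$. The crucial extra demand is \emph{coherence}: I would realise the $K_n$ as $H/L_n$ for a descending chain of normal subgroups $L_n\nor H$ with $L_n\cap G=G(n)$ and $\bigcap_n L_n=1$, the central torsion-free complements being chosen inductively and intersected so that $L_m\le L_n$ whenever $G(m)\le G(n)$. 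This produces honest transition maps $K_m\twoheadrightarrow K_n$ inducing $\bar G_m\to\bar G_n$ on derived groups, and $\bigcap_n L_n=1$ gives an embedding $H\hookrightarrow K$.

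With the inverse system in hand, Proposition~\ref{prop:profiniteintegral} gives $G\cong\overline{K'}$. To finish I must check that $K$ is finitely generated as a profinite group; here I would use that $K_n/K_n'=K_n/\bar G_n$ is \emph{abelian}, so $K/G$ is an abelian profinite group, and bound its rank --- together with the $d$ generators of $G=K'$ --- by the data controlled in Steps~1--3 of Theorem~\ref{t:suf} (namely $K_n/C_{K_n}(\bar G_n)$ embeds in $\Aut(\bar G_n)$ while the central part has rank at most $\rk Z(\bar G_n)$). Finite generation of $K$ then lets Nikolov--Segal (Remark~\ref{rem:nikolov-segal}(i)) upgrade $\overline{K'}=G$ to $K'=\overline{K'}=G$, so $K$ is simultaneously a profinite and an abstract integral. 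I expect the main obstacle to be exactly the coherence step: arranging the independently chosen finite integrals into a genuine inverse system with trivial intersection while keeping a \emph{uniform} bound on the number of generators, since the Step~2 reduction only bounds the generators of $K_n$ in terms of $\mu(\bar G_n)$, which need not be bounded across $n$ for a $d$-generated group; controlling this uniformly (via the abelian quotient $K/G$) is the delicate point.
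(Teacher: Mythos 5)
Your overall skeleton matches the paper's: realise $K$ as an inverse limit of finite integrals of the quotients $G/G(n)$, invoke Proposition~\ref{prop:profiniteintegral} to get $G\cong\overline{K'}$, and use Nikolov--Segal (Remark~\ref{rem:nikolov-segal}) to upgrade to $K'=\overline{K'}$ once $K$ is known to be topologically finitely generated. But the heart of the argument --- producing a \emph{coherent} inverse system of finite integrals with uniformly controlled generation --- is exactly the point you leave open, and the way you set it up cannot be repaired as stated. You cut each $H/G(n)$ down to a finitely generated subgroup chosen via a minimal commutator generating set of $G/G(n)$; these subgroups depend on $n$ and are not nested, so the resulting $K_n$ are quotients of \emph{different} subgroups of $H$ and cannot all be written as $H/L_n$ for a single descending chain $L_n\nor H$. (You cannot avoid cutting down by working with $H/G(n)$ itself: $H$ is not assumed finitely generated, so the Schur-type ``finite derived group implies centre of finite index'' step fails, and $H/GL_n$, a quotient of the possibly infinitely generated abelian group $H/G$, need not be finite.) Moreover, as you yourself note, $\mu(G/G(n))$ is not bounded in $n$, so this route gives no uniform bound on the number of generators of $K_n$ and hence no finite generation of $K$.

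The missing idea in the paper's proof is to fix a single auxiliary group \emph{before} descending to the quotients. Choose topological generators $g_1,\dots,g_s$ of $G$; by the proof of \cite[Proposition~9.1]{accm} there is one finitely generated abstract subgroup $T\le H$ with $T'=\langle g_1,\dots,g_s\rangle$. Set $H^*=GT$ and $L=(H^*)'\le G$. Although $L$ may be a proper (dense) subgroup of $G$, each $G(n)$ is open, so $G(n)L$ is a closed subgroup containing the dense subgroup $\langle g_1,\dots,g_s\rangle$, whence $G(n)L=G$ and $(H^*/G(n))'=G/G(n)$ for \emph{every} $n$. Since $H^*/G\cong T/(T\cap G)$ is finitely generated, each $H^*/G(n)$ is a finitely generated group with finite derived group, so its centre has finite index and is finitely generated abelian; one then inductively builds a descending chain of finite-index normal subgroups $N_n\le H^*$ with $N_n\cap G=G(n)$ (complements to torsion in these central quotients), and the $K_n=H^*/N_n$ form an honest inverse system, all generated by the images of the fixed elements $t_1,\dots,t_r,g_1,\dots,g_s$. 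This single-group device resolves the coherence and the uniform-generation problems simultaneously; without it, your construction does not close.
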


\begin{proof}
Let $H$ be an integral of $G$.
Take a set of generators $g_1,\ldots,g_s$ of $G$ as profinite group.
By the proof of \cite[Proposition 9.1]{accm}, we can find a finitely generated
abstract subgroup $T$ of
$H$ so that $\langle g_1,\ldots,g_s \rangle = T'$. 
We define
$H^*:=GT$ and $L:=(H^*)' \le G$. 

Let $G(n)$ be defined as in Remark \ref{rem:nikolov-segal}. For any $n \in \mathbb{N}$ we observe that $G(n) \le G(n)L \le G$, so $G(n) L$ 
is a finite index subgroup of $G$.
By Remark~\ref{rem:nikolov-segal}, $G(n) L$ is an open --- and thus also closed --- subgroup of $G$.
Moreover, since $G(n)L \ge \langle g_1,\ldots,g_s\rangle$, we have
$$G\ge G(n)L=\overline{G(n)L} \ge \overline{\langle g_1,\ldots,g_s\rangle}=G$$ and so $G=G(n)L$ for every $n \in \mathbb{N}$.

Moreover, $H^*/G =GT/G \cong T/(T \cap G)$ is finitely generated as an abstract group.

Notice that $$(H^*/G(n))'=L\cdot G(n)/G(n)=G/G(n)$$ is a finite group and so, by the same argument of the proof of \cite[Theorem 2.2]{accm} we have that $Z(H^*/G(n))=Z_n/G(n)$ for a suitable subgroup $Z_n \le H^*$ so that $H^*/Z_n$ is a finite group.
Since $H^*/G$ is finitely generated as an abstract group and $G/G(n)$ is finite, we have that $H^*/G(n)$ is finitely generated as an abstract group.
Moreover, since $Z_n/G(n)$ has finite index in $H^*/G(n)$, then $Z_n/G(n)$ is an abelian finitely generated
abstract group.

Let $N_2 \le Z_2$ be such that $N_2 \cap G = G(2)$ and $[Z_2:N_2]<\infty$ (for example, take $N_2/G(2)$
to be a complement to the torsion subgroup of $Z_2/G(2)$).
Now assume we have constructed
$N_n \le N_{n-1} \le \ldots \le N_2$ so that $N_n \cap G = G(n)$ and $[H^*:N_n]<\infty$. 
Since $(Z_{n+1} \cap N_n)/G(n+1)$ is central
 and has finite index in $H^*/G(n+1)$,
we can find $N_{n+1} \le N_n$ so that
\[
\frac{N_{n+1}}{G(n+1)} \le \frac{Z_{n+1}\cap N_n}{G(n+1)} \; \; \mathrm{and} \; \; N_{n+1} \cap G = G(n+1) \; \; \mathrm{and} \; \; [H^*:N_{n+1}]<\infty.
\]
Observe that $N_n$ is normal in $H^*$ for all $n\geq2$, as $G(n)\leq N_n \leq Z_n$ and $Z_n/G(n)$ is a central factor of $H^*$.

By construction, the system of finite groups $\{H^*/N_n, \pi_{n,m}\mid n,m\geq1\}$, where
\[
\pi_{n,m}: \frac{H^*}{N_n} \to \frac{H^*}{N_m}, \qquad \pi_{n,m}(hN_n)=hN_m, \qquad m \le n,
\]
forms an inverse system and
\[
\left( \frac{H^*}{N_n} \right)'= \frac{(H^*)'N_n}{N_n}=\frac{LN_n}{N_n}=\frac{LG(n)N_n}{N_n}=
\frac{GN_n}{N_n} \cong \frac{G}{G\cap N_n} =\frac{G}{G(n)}.
\]
Set $K_n=H^*/N_n$ for every $n\geq1$, and $K:=\varprojlim_{n\geq1}K_n$.
Then $K$ is a profinite group, and since $K_n'\cong G/G(n)$ for every $n\geq1$, Proposition~\ref{prop:profiniteintegral} implies that $G\cong \overline{K'}$ --- recall that $G=\varprojlim_nG/G(n)$ (cf. Remark~\ref{rem:nikolov-segal}).
Therefore, $K$ is a profinite integral of $G$. 
Observe that the definition of $K$ yields a homomorphism of groups $\phi\colon H^*\to K$ with kernel $\bigcap_{n\geq1}N_n$.

Since $$[H^*:GN_n][GN_n:N_n]=[H^*:N_n]<\infty\quad\text{for every }n\geq1,$$ and $g_1,\ldots,g_s$ are the topological generators of the
profinite group $G$, then we have that $g_1N_n,\ldots,g_sN_n$ generate the finite group
$GN_n/N_n$. Moreover, if $t_1,\ldots, t_r$ are the abstract generators of the abstract group $T$, then the cosets $t_1 GN_n, \ldots, t_r GN_n$ generate the finite group $H^*/GN_n$.
Therefore the cosets
$$t_1 N_n,\ldots,t_r N_n, g_1N_n,\ldots, g_s N_n$$ generate the finite group $H^*/N_n$ for every $n\geq1$. 
Consequently, \cite[Lemma~2.4.1]{RibZal:book} implies that $K$ is (topologically) generated by the elements
\[
\phi(t_1) ,\:\ldots,\: \phi(t_r),\: \phi(g_1),\:\ldots,\: \phi(g_s),\]
namely, $K$ is a finitely generated profinite group.
In particular, $\overline{K'}=K'$ by Remark~\ref{rem:nikolov-segal}, and thus $K$ is both a profinite and an abstract integral of $G$. \qed
\end{proof}

The obvious generalisation of these two theorems would assert that if a
profinite group has an integral, then it has a profinite integral.
But this is false, as we show in Section \ref{sec:products}.

\medskip

It is natural to ask whether the finitely generated profinite group $G$ has a
profinite integral if and only if $G/G(n)$ is integrable for all $n\ge1$.
We can answer this question in the case when $Z(G)=1$. We begin with a simple
observation. 

\begin{lemma}\label{lemprof1} Let  $N$ be a characteristic subgroup of the group $G$ with $Z(G)\le N$. Then for every integral $H$ of $G$ there exists a uniquely defined section $H_1$ of $\Aut (G)$ such that $H_1$ is an integral of $G/N$ 
and a homomorphic image of $H$.
\end{lemma}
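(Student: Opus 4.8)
The plan is to mimic the construction in the proof of Theorem~\ref{t:nec}, pushing it one step further by quotienting out the image of $N$. Since $H$ is an integral of $G$ I would first identify $G$ with $H'$, so that $G\nor H$ and conjugation yields a homomorphism $\pi\colon H\to\Aut(G)$ with kernel $K=C_H(G)$. Writing $\bar H:=\pi(H)=H/K\le\Aut(G)$, the same computation as in Theorem~\ref{t:nec} gives $\bar H'=\pi(H')=\pi(G)=\Inn(G)$, so that $\bar H$ is an integral of $\Inn(G)\cong G/Z(G)$ sitting inside $\Aut(G)$.

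The new ingredient is the subgroup $\bar N:=\pi(N)=\Inn_N(G)$, the group of inner automorphisms induced by elements of $N$. Because $N$ is characteristic, for every $\phi\in\Aut(G)$ and $n\in N$ one has $\phi\,\mathrm{conj}_n\,\phi^{-1}=\mathrm{conj}_{\phi(n)}$ with $\phi(n)\in N$, so $\bar N\nor\Aut(G)$. Moreover $N\le G=H'$ forces $\bar N\le\pi(G)=\Inn(G)=\bar H'$, and in particular $\bar N\nor\bar H$. I would then set
\[H_1:=\bar H/\bar N,\]
which is manifestly a section of $\Aut(G)$ (the quotient of the subgroup $\bar H$ by the normal subgroup $\bar N$) and a homomorphic image of $H$ via the composite $H\xrightarrow{\pi}\bar H\twoheadrightarrow H_1$.

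It then remains to identify $H_1'$. Since $\bar N\le\bar H'$, the derived subgroup of the quotient is $H_1'=\bar H'\bar N/\bar N=\bar H'/\bar N=\Inn(G)/\Inn_N(G)$. Under the canonical isomorphism $\Inn(G)\cong G/Z(G)$ the subgroup $\Inn_N(G)$ corresponds to $N/Z(G)$ (this is exactly where the hypothesis $Z(G)\le N$ enters, guaranteeing $\pi|_N$ has kernel $N\cap Z(G)=Z(G)$), so by the third isomorphism theorem
\[H_1'\cong (G/Z(G))\big/(N/Z(G))\cong G/N,\]
so $H_1$ is an integral of $G/N$. For the uniqueness clause I would observe that no arbitrary choices enter: $\bar H$ is determined by $H$ as the image of its conjugation action on $H'$, while $\bar N=\Inn_N(G)$ depends only on $G$ and $N$; hence the section $H_1=\bar H/\bar N$ of $\Aut(G)$ is canonically attached to $H$.

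The two points I would flag as the crux are the verification that $\bar N\nor\Aut(G)$ (precisely where characteristicity of $N$ is used) and that $\bar N\le\bar H'$; together these are what make $H_1$ a genuine section whose formation commutes with passing to the derived subgroup. Neither step is technically deep, but both hypotheses ($N$ characteristic, $Z(G)\le N$) are essential and it is worth checking that they are used at exactly these two places.
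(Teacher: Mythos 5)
Your proof is correct and is essentially the paper's argument in a light disguise: the paper sets $K=C_H(G)$ and takes $H_1=H/NK$, which is exactly your $\bar H/\bar N=(H/K)/(NK/K)$, and it computes $(H/NK)'=GK/NK\cong G/(N(G\cap K))=G/N$ using $G\cap K=Z(G)\le N$, just as you do via $\Inn(G)/\Inn_N(G)$. The places where you invoke the hypotheses (characteristicity of $N$ to get normality of $\bar N$, and $Z(G)\le N$ to identify the kernel of $\pi|_N$) match the paper's use of them, so no further changes are needed.
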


\begin{proof} Let 
 $K = C_H(G)$; then $H/K$ is (isomorphic to) a subgroup of $\Aut(G)$. Since $K\cap G = Z(G) \le N \trianglelefteq H$, we have
\[ \left(\frac{H}{NK}\right)' = \frac{GK}{NK} = \frac{G}{G\cap NK} = \frac{G}{N(G\cap K)} = \frac{G}{N};\]
thus $H_1 = H/NK$ is an integral of $G/N$. \qed
\end{proof}

\begin{theorem}\label{teoprof1}
Let $G$ be a finitely generated profinite group with $Z(G)=1$. Then the following are equivalent, 
\begin{enumerate}
\item\label{111} $G/G(n)$ is integrable for every $n\ge 1$;
\item\label{112} $G$ has a profinite integral, i.e., there exists a profinite group $K$ such that $G=\overline{K'}$.
\end{enumerate}
\label{t:cofz1}
\end{theorem}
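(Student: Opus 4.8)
The plan is to prove the two implications separately; the implication (\ref{112})$\Rightarrow$(\ref{111}) is routine, whereas (\ref{111})$\Rightarrow$(\ref{112}) carries all the weight. For the easy direction, suppose $G=\overline{K'}$ for a profinite group $K$ and fix $n$. Since $G(n)$ is characteristic in $G$ and $G=\overline{K'}$ is characteristic --- in particular closed and normal --- in $K$, the subgroup $G(n)$ is closed and normal in $K$, so $K/G(n)$ is again profinite. Because $\overline{K'}=G$ one has $\overline{K'G(n)}=G$, and passing to the quotient by the closed subgroup $G(n)$ gives $\overline{(K/G(n))'}=G/G(n)$. As $G/G(n)$ is finite, the abstract derived subgroup already equals this closure, so $(K/G(n))'=G/G(n)$ and $K/G(n)$ is an integral of $G/G(n)$; hence $G/G(n)$ is integrable. (Note that this direction does not use $Z(G)=1$.)

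For (\ref{111})$\Rightarrow$(\ref{112}) the strategy is to manufacture a \emph{compatible} inverse system of finite integrals $K_n$ of the quotients $G/G(n)$ and then feed it into Proposition~\ref{prop:profiniteintegral}. The obstruction to compatibility is the appearance of centres in the finite quotients, and this is exactly where $Z(G)=1$ enters. Writing $Z_n$ for the preimage in $G$ of $Z(G/G(n))$, the $Z_n$ are open (they contain $G(n)$), characteristic, and descending, with $\bigcap_n Z_n=Z(G)=1$; by compactness $\{Z_n\}$ is therefore a base of neighbourhoods of the identity. Consequently I may pass to a cofinal subsequence and reindex so that $Z_{n+1}\le G(n)$ for all $n$, that is, $Z(G/G(n+1))$ is contained in the kernel of the canonical map $G/G(n+1)\to G/G(n)$. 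Along this filtration one still has $G=\varprojlim_n G/G(n)$, and now Lemma~\ref{lemprof1} applies to each quotient map $G/G(n+1)\to G/G(n)$ (whose kernel $N=G(n)/G(n+1)$ satisfies $Z(G/G(n+1))\le N$): every integral $H$ of $G/G(n+1)$ admits a canonical \emph{reduction} $\mathrm{red}(H)$, an integral of $G/G(n)$ realised as a section of $\Aut(G/G(n+1))$, together with a surjection $H\twoheadrightarrow\mathrm{red}(H)$ inducing the canonical map $G/G(n+1)\to G/G(n)$ on derived subgroups.

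With the reduction maps in hand I would run a König's lemma argument. For each $n$ let $\Omega_n$ be the set of isomorphism types of integrals of $G/G(n)$ occurring as $\mathrm{red}(H)$ for some integral $H$ of $G/G(n+1)$. This set is finite, since every such reduction is a section of the fixed finite group $\Aut(G/G(n+1))$, and it is nonempty, since $G/G(n+1)$ is integrable by hypothesis and a finite integrable group has a finite integral by \cite[Theorem~2.2]{accm}. The reductions furnish transition maps $\Omega_{n+1}\to\Omega_n$, so $\{\Omega_n\}$ is an inverse system of nonempty finite sets over $\N$; its inverse limit is nonempty, yielding a thread $(K_n)_n$ with $\mathrm{red}(K_{n+1})=K_n$. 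This in turn gives an inverse system of finite groups $\{K_n,\psi_n\}$ with canonical isomorphisms $\tau_n\colon G/G(n)\to K_n'$ and with the $\psi_n$ inducing the canonical maps on derived subgroups. Proposition~\ref{prop:profiniteintegral} then yields $G\cong\overline{K'}$ for $K=\varprojlim_n K_n$, so $G$ has a profinite integral.

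The hard part, and the crux of the whole argument, is securing the finiteness required for König's lemma: integrals of a fixed finite group can be arbitrarily large --- bounding them is precisely the open problem of Section~\ref{s:bounding} --- so one cannot merely take \emph{all} integrals. The resolution is that Lemma~\ref{lemprof1} forces each reduced integral to live as a section of the finite group $\Aut(G/G(n+1))$, and the cofinal-subsequence trick, available exactly because $Z(G)=1$, is what makes Lemma~\ref{lemprof1} applicable at every step. I would double-check two technical points: that the reduction genuinely lands in $\Omega_n$ (so that the transition maps are well defined), and that the surjections $\psi_n$ restrict on derived subgroups to the canonical quotient maps $G/G(n+1)\to G/G(n)$, which is what lets Proposition~\ref{prop:profiniteintegral} identify $\overline{K'}$ with $G$ rather than with some proper subgroup.
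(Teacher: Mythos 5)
Your proposal is correct and follows essentially the same route as the paper's proof: both directions match, and for (\ref{111})$\Rightarrow$(\ref{112}) you use $Z(G)=1$ to pass to a cofinal subsequence of the $G(n)$ along which the centres of the finite quotients are absorbed into the kernels, then apply Lemma~\ref{lemprof1} to get reductions landing in finite sets of sections of the automorphism groups, extract a thread by K\"onig's lemma, and conclude via Proposition~\ref{prop:profiniteintegral}. The only (cosmetic) differences are that you extract the subsequence by a compactness argument on the preimages $Z_n$ where the paper argues directly with a pigeonhole on the finite quotients, and the two technical points you flag for checking are indeed exactly the ones the paper's formulation of Lemma~\ref{lemprof1} and Proposition~\ref{prop:profiniteintegral} is designed to handle.
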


\begin{proof} Since every $G(n)$ is a characteristic subgroup of $G$, (\ref{112}) implies that $G/G(n)=\overline{(K/G(n))'}$.
Since $G$ is finitely generated, $G(n)$ is an open subgroup of $G$ for every $n$ (cf. Remark~\ref{rem:nikolov-segal}), and thus $G/G(n)$ is a finite subgroup of the profinite group $K/G(n)$.
Hence, also $(K/G(n))'$ is a finite subgroup of $K/G(n)$, and thus $$\overline{(K/G(n))'}=(K/G(n))'$$ --- observe that every finite subgroup of a profinite group is closed, as profinite groups are totally disconnected (cf. \cite[Theorem~2.1.3:(b)]{RibZal:book}). Therefore, $G/G(n)={(K/G(n))'}$, and this shows the implication (\ref{112})~$\Rightarrow$~(\ref{111}). So we proceed in proving (\ref{111})~$\Rightarrow$~(\ref{112}).

\smallskip
Thus, let $G$ be a finitely generated profinite group, and suppose that $G/G(n)$ is integrable for every $n\ge 1$. 
For every $n\ge 1$, we write $Q_n = G/G(n)$ and denote by $\pi_n: G \to Q_n$ the natural projection.

Given an index $i\ge 1$ suppose that $Z(Q_{i+n}) \not\le \pi_{i+n}(G(i))$ for every $n\ge 0$. Then, as $G/G(i)$ is finite, there exists $x\in G$, with $1\ne xG(i)\in Z(Q_i)$ such that 
\[ \pi_{i+n}(x) \in Z(Q_{i+n})\]
for infinitely many $n\ge 0$.

Then  $[x, G] \le \bigcap_{n\ge 0}G(i+n) = 1$ and so $x$ is a non-trivial central element of $G$, which is a contradiction. Therefore, for every $i\ge 1$ there exists $i^{\ast}\ge i+1$ such that $G(i)/G(j) \ge Z(Q_j)$ for every $j \ge i^{\ast}$, and we may thus select an infinite subset $I = \{i_1, i_2, \ldots\}$ of positive integers such that $i_1 < i_ 2 < \ldots$ and 
\[ \pi_j(G(i_n)) \ge Z(Q_{i_m})\]
for every $i_n,i_m\in I$ with $n < m$.
For each $n\ge 1$, we write $G_n = Q_{i_n}$; so that $G_n$ is a quotient of $G_{n+1}$ modulo a characteristic subgroup; we also set
\[ \mathfrak{I}(G_n) = \{ H\mid H\ \mathrm{is\ a\ section\ of} \Aut(G_{n+1})\ \mathrm{and}\ H'\cong G_n\}. \]
By Lemma \ref{lemprof1} and the fact that $G_{n+1}$ is integrable, $\mathfrak I(G_n)$ is not empty, and finite; moreover, for every $Y \in \mathfrak{I}(G_{n+1})$ there are a uniquely defined $Y^{\ast}\in \mathfrak{I}(G_n)$ and a surjective homomorphism $Y \to Y^{\ast}$.\\
For every $n\ge 1$, and every pair $(H_n, H_{n+1}) \in  \mathfrak{I}(G_n)\times  \mathfrak{I}(G_{n+1})$, we then write an arrow
$H_n \to H_{n+1}$ if $H_n = H_{n+1}^{\ast}$.
This gives rise to an infinite locally finite directed tree which, by K\"onig's Lemma, has an infinite path
\[ H_ 1 \longrightarrow H_2 \longrightarrow \ldots \longrightarrow H_{n} \longrightarrow \ldots \]
Then (reversing the arrows) for each $n\ge 1$, there exists a surjective homomorphism $H_{n+1}\to H_n$, and by taking compositions we have, for every $1\le n < m$, a surjective homomorphism $\psi_{m,n}\colon H_m \to H_n$.
Let $$K:=\varprojlim_{n\geq1}H_n$$ be the profinite group associated to the inverse system thus defined. 
Since $G_n=G/G(i_n)\cong H_n'$ for every $n\geq1$, and $$G=\varprojlim_{n\geq1} G_{n}=\varprojlim_{n\geq 1}G/G(i_n)$$
(cf. Remark~\ref{rem:nikolov-segal}), Proposition~\ref{prop:profiniteintegral} implies that $G\cong \overline{K'}$, and thus $K$ is a profinite integral of $G$. \qed
\end{proof}

\begin{example}
\label{ex:finite-products}
 (finite groups) For $n\ge 2$, let $X_n$ be an elementary abelian $2$-group of order $2^n$; the number of maximal subgroups of $X_n$ is $\nu(n) = 2^n-1$. Let $M_1, M_2, \ldots  M_{\nu(n)}$ be the distinct maximal subgroups of $X_n$; let $p_1, p_2, \ldots, p_{\nu(n)}$ be distinct odd primes, and for each $i=1, \ldots, \nu(n)$, let $\langle a_i\rangle$ be a cyclic group of order $p_i$. We let $X_n$ act on the cyclic group $$ A_n= \langle a_1\rangle\times \cdots \times \langle a_{\nu(n)}\rangle$$ by setting
$C_{X_n}(a_i) = M_i$ and $X_n/M_i$ act as the inversion map on $\langle a_i\rangle$, for every $i =1, 2,\ldots, \nu(n)$.  We now consider the semidirect product $H_n = A_n\sdir X_n$. 
 Then $H_n$ is an integral of the cyclic group $A_n$, and $H_n/A_n$ is 
$n$-generated.
 
But for every subgroup $R$ with $A_n \le R < H_n$ we have $C_{A_n}(R) \ne 1$, and so $R' < A_n$. Thus, $H_n$ is a minimal integral of $A_n$ and, in particular, does not contain any integral of $A_n$ with less than $n$ generators.
\end{example}

\begin{example} (profinite groups) We first partition the set $\mathcal D$ of all odd primes in a countable union of disjoint sets 
$\mathcal D = \bigcup_{n\ge 2} \mathcal{D}_n$, 
with $|\mathcal{D}_n| = 2^n-1$ for every $n\ge 2$. Then, for every $n\ge 2$, we consider the group $H_n$ as constructed in Example~\ref{ex:finite-products} --- with $\{p_{n,1}, \ldots , p_{n,\nu(n)}\} = \mathcal{D}_n$ and $A_n=\langle a_{n,1},\ldots,a_{n,\nu(n)}\rangle$ ---, and set $H = \Car_{n\ge 2}H_n$.
Then $H$, endowed with the product topology, is a profinite group.
In particular, one has
\[
 H=\varprojlim_{n\geq2} \left(\left(\prod_{i=2}^nA_i\right)\rtimes \left(\prod_{i=2}^n X_i\right)\right) 
=A\rtimes X,
\]
where the epimorphisms $\prod_{i=2}^nA_i\twoheadrightarrow\prod_{i=2}^mA_m$ and
$\prod_{i=2}^nX_i\twoheadrightarrow\prod_{i=2}^mX_m$, for $n\geq m$, are the canonical projections.
Observe that $X:=\varprojlim_n(\prod_{i=1}^n X_i)$ is a pro-2-Sylow subgroup of $H$; while
\[
 A:=\varprojlim_{n\geq2}\left(\prod_{i=2}^nA_i\right)\simeq
 \left\{\:(k_p)_{p\in\mathcal{D}}\in\Car_{p\in\mathcal{D}}\mathbb{Z}/p\mathbb{Z}\:\right\},
\]
whose cardinality is a supernatural number prime to 2.
Moreover, since
\[
 \widehat{\mathbb{Z}}=\left\{(k_n)_{n\geq2}\in\Car_{n\geq2}\mathbb{Z}/n\mathbb{Z}\:\mid\:k_n\equiv k_m\bmod m\text{ for }m\mid n\right\},
\]
one has an epimorphism of profinite groups $\phi\colon \widehat{\mathbb{Z}}\to A$, so that $A$ is pro-cyclic, generated (as a profinite group) by the ``diagonal'' element 
\[
 \bar a:=\left(a_{n,i}\right)_{n\geq 2,\:1\leq i\leq \nu(n)}=\phi(1).
\]
Since $A_n=H_n'$ for every $n\geq2$ (cf. Example~\ref{ex:finite-products}), one has $\overline{H'}=A$ by Proposition~\ref{prop:profiniteintegral}.

On the other hand, we observe that $A$ does not coincide with $H'$, the abstract derived group of $H$. To see that, observe first that $H'$ is the subgroup generated by all commutators $[b,x]$ with $b\in A$, $x\in X$.
We claim that $\bar{a}$ cannot be expressed as a product of a finite number of such commutators. 
Indeed, set $h=[b_1,x_1]\cdots[b_r,x_r]$ for some $b_1,\ldots,b_r\in A$ and $x_1,\ldots,x_r\in X$, and $r\geq1$.
Pick $n$ such that $r<\nu(n)$, and let $\pi_n\colon A\to A_n$ and $\tau_n\colon X\to X_n$ denote the projections onto the $n$-th component.
Then 
\[\pi_n(h)=[\pi_n(b_1),\tau_n(x_1)]\cdots[\pi_n(b_r),\tau_n(x_r)]=a_{n,i_1}^{s_{i_1}}\cdots a_{n,i_r}^{s_{i_r}},\]
for some $1\leq i_j\leq\nu(n)$ and $1\leq s_{i_j}\leq p_{n,i_j}$ for every $j=1,\ldots,r$.
Since $\pi_n(\bar a)=a_{n,1}\cdots a_{n,\nu(n)}$, and $r<\nu(n)$, the elements $a_{n,i_1},\ldots, a_{n,i_r}$ are less than $\nu(n)$, and hence they are not enough to generate $\pi_n(\bar a)$, so that $h\neq \bar a$.
Therefore, $\bar a$ does not belong to the abstract derived group $H'$. 
\end{example}


\subsection{Products}
\label{sec:products}
 
In \cite{accm}, we asked whether the group $D_8\times D_8$ has an integral.
Here, we  answer this negatively, in a strong form: no finite direct power of
a non-abelian dihedral group has an integral.

\begin{prop}\label{procar1} 
Let $n\ge 3$ and let $G$ be a normal subgroup of the group $H$, with
$G\cong(D_{2n})^m$ (the direct product of $m$ copies of the dihedral group
$D_{2n}$). Then 
\[G\cap H'< G.\]
\end{prop}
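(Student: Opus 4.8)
The plan is to detect the reflections of $G$ by a determinant that the derived group cannot see. Write $G=D_1\times\cdots\times D_m$ with $D_i=\langle r_i,s_i\rangle\cong D_{2n}$ ($r_i$ a rotation of order $n$, $s_i$ a reflection), and let $\lambda\colon G\to C_2$ be the homomorphism recording the parity of the number of coordinates in which an element is a reflection; equivalently $\lambda$ is the composite $G\to G/R\to C_2$, where $R=\langle r_1,\ldots,r_m\rangle$ is the product of the rotation subgroups and the last map sums the coordinates. Since reflections exist, $\lambda$ is onto, so $G_0:=\ker\lambda$ is a proper subgroup of $G$, and it suffices to prove $G\cap H'\le G_0$.

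The key mechanism uses a characteristic abelian subgroup of rank $m$ on which a reflection acts by a single sign change. Take $A=G'=\langle r_1^2,\ldots,r_m^2\rangle$, which is characteristic in $G$, hence normal in $H$, and is a free $\mathbb{Z}/n'$-module of rank $m$ with $n'=n/\gcd(n,2)$. Conjugation gives $H\to\Aut(A)=\mathrm{GL}_m(\mathbb{Z}/n')$, and composing with the determinant yields a homomorphism $\mathbf{d}\colon H\to(\mathbb{Z}/n')^{\times}$ into an abelian group, so $\mathbf{d}(H')=1$. For $g\in G$ the action on $A$ is diagonal: in coordinate $i$ it is the identity if the $i$th entry of $g$ is a rotation and inversion if it is a reflection (distinct direct factors commute); hence $\mathbf{d}(g)=(-1)^{\lambda(g)}$. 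Provided $-1\ne 1$ in $(\mathbb{Z}/n')^{\times}$, i.e. $n'\ge 3$, every $w\in G\cap H'$ then satisfies $(-1)^{\lambda(w)}=\mathbf{d}(w)=1$, so $\lambda(w)=0$ and $w\in G_0$; thus $G\cap H'\le G_0<G$. (This also recovers, and strengthens, the necessary condition $\Inn(G)\not\le\Aut(G)'$ implicit in Theorem~\ref{t:nec}, but the determinant argument is cleaner and applies to an arbitrary normal copy of $G$.)

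The inequality $n'\ge 3$ holds for all $n\ge 3$ except $n=4$, so everything but $G\cong D_8^m$ is done, and this remaining case is the genuine obstacle. For $D_8$ the group is nilpotent of class $2$ with $G'=Z(G)$, so conjugation is trivial on $G'$ and the determinant above is valueless; moreover inner automorphisms act trivially on $G/G'$ as well, so no characteristic central or abelianised section can detect a reflection. The fix is to run the same argument on the larger subgroup $R=\langle r_1,\ldots,r_m\rangle\cong C_4^m$, on which a reflection still acts by a sign change, using $\det\colon\mathrm{GL}_m(\mathbb{Z}/4)\to(\mathbb{Z}/4)^{\times}\cong C_2$ (where $-1\ne 1$). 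The point that must be established — and which I expect to be the hardest step, since automorphisms of a direct power may mix the factors — is that $R$ is characteristic in $D_8^m$. I would prove this through the intrinsic description $R=\langle X\rangle$, where $X=\{g\in G:\ g^2\ne 1,\ [G:C_G(g)]=2\}$ is the set of order-$4$ elements whose conjugacy class has size $2$: automorphisms preserve $X$ because they preserve orders and centraliser indices; every $g\in X$ lies in $R$, because class size $2$ forces all but one coordinate of $g$ to be central while order $4$ forces the remaining coordinate to be $r_i^{\pm1}$; and each $r_i\in X$, so $\langle X\rangle=R$. Once $R$ is known to be characteristic, the determinant argument closes the case $n=4$ exactly as above. (The same subgroup $R\cong C_n^m$ with $\det\colon\mathrm{GL}_m(\mathbb{Z}/n)\to(\mathbb{Z}/n)^{\times}$ would give a single uniform proof for all $n\ge 3$, at the cost of proving $R$ characteristic in every $D_{2n}^m$.)
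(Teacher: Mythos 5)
Your proof is correct and is essentially the paper's own argument: both detect the image of a single reflection via the determinant of the conjugation action on a characteristic homocyclic abelian subgroup of rank $m$, which lands in the abelian group $(\mathbb{Z}/k\mathbb{Z})^{\times}$ and therefore kills $H'$. The one place you diverge is where the paper simply asserts that $A=\langle y_1,\ldots,y_m\rangle$ is characteristic in $(D_{2n})^m$: you sidestep this by using the manifestly characteristic $G'$ (which works for all $n\ne4$ since then $-1\ne1$ in $\mathbb{Z}/n'\mathbb{Z}$) and, for $D_8^m$, you correctly prove characteristicity of the rotation subgroup via the order-$4$ elements with conjugacy class of size $2$ --- a justification the paper omits.
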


\begin{proof}
For $i=1,\ldots m$, set
\[G_i = \langle y_i, x_i\mid y_i^n = x_i^2 = 1,\, y_i^{x_i} = y^{-1}\rangle,\]
and $G = G_1\times \ldots \times G_m$. Let $A = \langle y_1, \ldots, y_m\rangle$; then $A$ is a characteristic abelian subgroup of $G$, it is homocyclic of type $n^m$, and $C_G(A) = A$.

Let $K = C_H(A)$; then $K\unlhd H$ and $K\cap G = A$. Moreover, $H/K$ embeds in
$\Aut (A)$,
which is isomorphic to the group $\GL(m, \mathbb{Z}/n\mathbb{Z})$ of
all invertible $m\times m$ matrices over the ring $\mathbb{Z}/n\mathbb{Z}$.

Let $\bar x_1$ be the image of $(x_1, 1,  \ldots, 1)K$ in
$\GL(m,\mathbb{Z}/n\mathbb{Z})$. Then $\bar x_1$ acts on $A$ as the matrix
\[\left[\begin{array}{rrrr}
-1 & 0 & \cdots & 0\\
0 & 1 & \cdots & 0\\
\cdot & \cdot &\cdots & \cdot \\
0 & 0 & \cdots & 1\\
\end{array}\right],\]
and in particular $\det (\bar x_1) = -1$. This means that  $\bar x_i$ does not
belong to the derived subgroup of $\GL(m,\mathrm{Z}/n\mathrm{Z})$ and so it
cannot possibly belong to the derived group $(H/K)'  = H'K/K$.
Thus, $(x_1, 1,  \ldots, 1) \in G\setminus H'$. \qed
\end{proof}

Observe that, in the previous Proposition,  the group $H$ need not be finite.

\begin{cor} For every $n\ge 3$ and $m\ge 1$ the direct sum $(D_{2n})^m$
does not have an integral.
\end{cor}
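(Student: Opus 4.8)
The plan is to obtain this as an immediate consequence of Proposition~\ref{procar1}. The essential point is that, by definition, an integral $H$ of $(D_{2n})^m$ is exactly a group whose derived subgroup $H'$ is isomorphic to $(D_{2n})^m$, and that the derived subgroup $H'$ is always a normal (indeed characteristic) subgroup of $H$. This means that an integral automatically supplies a copy of $(D_{2n})^m$ sitting inside $H$ as a normal subgroup, which is precisely the configuration the proposition rules out.

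First I would argue by contradiction: suppose $(D_{2n})^m$ is integrable, so there is a group $H$ with $H'\cong(D_{2n})^m$. Since $m$ is finite, the direct sum and the direct product of $m$ copies of $D_{2n}$ coincide, so this is the same group appearing in Proposition~\ref{procar1}. I would then set $G:=H'$. By construction $G$ is a normal subgroup of $H$ with $G\cong(D_{2n})^m$, so the hypotheses of Proposition~\ref{procar1} are met verbatim, and the proposition yields $G\cap H'<G$.

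To finish, I would observe that this is absurd: since $G=H'$, we have $G\cap H'=H'\cap H'=H'=G$, contradicting the strict inclusion $G\cap H'<G$. Hence no such $H$ exists, and $(D_{2n})^m$ is not integrable for any $n\ge 3$ and any $m\ge 1$.

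There is essentially no obstacle here, as all of the genuine work is carried by Proposition~\ref{procar1}, whose proof produces the characteristic homocyclic subgroup $A=\langle y_1,\dots,y_m\rangle$ with $C_G(A)=A$ and extracts the determinant obstruction in $\GL(m,\mathbb{Z}/n\mathbb{Z})$. The only thing requiring verification is that the choice $G=H'$ satisfies the proposition's hypotheses, which is automatic once one recalls that $H'$ is normal and that for finite $m$ the direct sum equals the direct product. I would also remark, as the paper does after the proposition, that this conclusion needs no finiteness assumption on $H$, so the corollary holds in full generality.
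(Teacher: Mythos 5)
Your proof is correct and is exactly the intended deduction: take $G=H'$ (normal in $H$, isomorphic to $(D_{2n})^m$), apply Proposition~\ref{procar1} to get $G\cap H'<G$, and note this contradicts $G\cap H'=G$. The paper leaves this as immediate, and your write-up fills in the same (routine) details.
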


Now we use the above result to construct a profinite group which has an 
integral, but does not have a profinite integral, or even a residually finite
integral. Our group is
\[G = \Car_{n\in \mathbb{Z}}G_n\]
where, for every $n\in \mathbb{Z}$,
$G_n\cong D_8 = \langle y_n, x_n\mid y_n^4 = x_n^2 =1,\, y_n^{x_n} = y_n^{-1}\rangle$, and $\Car$ denotes the unrestricted Cartesian product --- so, $G$ is profinite by Remark~\ref{rem:productsZ are profinite} below.
We  identify $G_n$ with the $n$-th coordinate subgroup of $G$, while for a
generic element of $G$, we write $\bar g = (g_n)_{n\in \mathbb{Z}}$, with
$g_n\in G_n$. We also set $u_n = y_n^2$, for every $n\in \mathbb{Z}$; thus
\[ Z(G) = G' = \Car_{n\in \mathbb{Z}}\langle u_n\rangle.\]

\begin{remark}\label{rem:productsZ are profinite}
Let $H$ be a finite group, and set $G=\Car_{n\in \mathbb{Z}}G_n$, where $G_n\cong H$ for every $n\in\mathbb{Z}$, and consider every $G_n$ as a discrete group.
Then $G$, endowed with the product topology, is a profinite group --- namely, every open subset $U$ of $G$ has the shape
\[U=\left(\Car_{n\in I}U_n\right)\times \left(\Car_{n\in \mathbb{Z}\smallsetminus I}G_n\right)\]
for some finite subset $I\subset \mathbb{Z}$, and some subset $U_n\subseteq G_n$ for every $n\in I$.
One may see $G$ as the projective limit of a directed system of finite groups as follows.
Write $\mathbb{Z}=\{i_1,i_2,\ldots,i_n,\ldots\}$, and for every $n\geq1$ set $P_n=\prod_{k=1}^n G_{i_k}$, endowed with the canonical projections $\varphi_{n,m}\colon P_n\to P_m$, with $\ker(\varphi_{n,m})=\prod_{k=m+1}^nG_{i_k}$, for every $n> m$.
Then $\{P_n,\varphi_{n,m}\}$ makes up a directed system of finite groups, and $G=\varprojlim_{n\geq1}P_n$.
\end{remark}

\begin{lemma}\label{lemcar1}
The unrestricted wreath product $D_8\wp \mathbb{Z}$ is an integral  of $G$.
\end{lemma}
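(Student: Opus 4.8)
The plan is to exhibit an explicit isomorphism between the derived subgroup of the unrestricted wreath product $W = D_8 \wp \mathbb{Z}$ and our target group $G = \Car_{n \in \mathbb{Z}} G_n$, where each $G_n \cong D_8$. Recall that $W = B \rtimes \mathbb{Z}$, where the base group $B = \Car_{n \in \mathbb{Z}} D_8$ is an unrestricted Cartesian product of copies of $D_8$, and $\mathbb{Z} = \langle t \rangle$ acts by shifting coordinates. The derived subgroup $W'$ will be generated by two kinds of elements: the commutators $[B, t]$ arising from the shift action, together with $B'$, the derived subgroup of the base group itself. Since $D_8$ is nonabelian with $D_8' = \langle u \rangle \cong C_2$, we have $B' = \Car_{n \in \mathbb{Z}} \langle u_n \rangle = Z(G) = G'$ already sitting inside $W'$.

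The key computation is to understand $[B, t]$. For a generic $\bar{b} = (b_n)_{n \in \mathbb{Z}} \in B$, the commutator $[\bar{b}, t] = \bar{b}^{-1} \bar{b}^t$ has $n$-th coordinate $b_n^{-1} b_{n-1}$ (up to indexing convention). The map $\bar{b} \mapsto [\bar{b}, t]$ is essentially the ``difference operator'' $\bar{b} \mapsto \bar{b}^{-1}\bar{b}^t$ on the base group. First I would verify that, modulo $B'$, this difference map is surjective onto the whole base group $B/B'$: given any target sequence in $B/B'$ (which is $\Car_{n} D_8/\langle u_n\rangle \cong \Car_n (C_2 \times C_2)$, an elementary abelian $2$-group), one can solve the telescoping recurrence $\bar{c}_n = b_n^{-1} b_{n-1}$ working in the abelian quotient, because the \emph{unrestricted} product imposes no finiteness condition — one simply sets $b_n$ by accumulating a product in one direction from an arbitrary starting point. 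This surjectivity, combined with $B' \le W'$, should give $W' \cap B = B$, i.e. the entire base group lies in $W'$.

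Once I establish that $B \le W'$, the claim $W' = B$ follows since $W/B \cong \mathbb{Z}$ is abelian, forcing $W' \le B$; hence $W' = B \cong G$, giving the desired integral. The main obstacle I anticipate is the surjectivity of the difference map in the \emph{unrestricted} setting, and in particular confirming that we genuinely reach all of $B$ and not merely a proper subgroup. The subtlety is that in a \emph{restricted} wreath product the difference map would have image of infinite index (only sequences summing to zero are reachable), but in the unrestricted product this obstruction disappears because we may prescribe coordinates freely and propagate in a single direction without needing them to vanish at infinity. I would therefore take care to argue the telescoping solution explicitly, handling the abelianized coordinates in $B/B'$ first to hit everything modulo $B'$, and then using $B' \le W'$ to fill in the remaining central $C_2$ factors; I expect the nonabelian structure of $D_8$ to cause no trouble precisely because $D_8/D_8'$ is abelian and $D_8'$ is already captured.
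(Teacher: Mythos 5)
Your proof is correct, but it takes a different route from the paper: the paper disposes of this lemma in one line by citing P.~M.~Neumann's structural result on standard wreath products (Corollary~5.3 of \emph{On the structure of standard wreath products of groups}), which states that the derived subgroup of the unrestricted wreath product is exactly the base group. You instead give a self-contained computation. Your argument is sound: $W'\le B$ because $W/B\cong\mathbb{Z}$ is abelian; $B'=\Car_n\langle u_n\rangle\le W'$; and the difference map $\bar b\mapsto \bar b^{-1}\bar b^{\,t}$ induces a homomorphism on the abelian quotient $B/B'$ whose surjectivity follows from the one-sided telescoping recurrence $b_{n}=b_{n-1}c_n^{-1}$, which is solvable precisely because the unrestricted product imposes no support condition on the coordinates. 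Together these give $B\le W'$, hence $W'=B\cong G$. Your explicit approach has the virtue of isolating exactly where the unrestricted hypothesis enters (in the restricted product the image of the difference map has infinite index in $B/B'$, which is consistent with Proposition~\ref{procar1} showing that restricted powers of $D_8$ are not integrable), at the cost of redoing a special case of a known general theorem; the paper's citation is shorter but opaque about this mechanism.
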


\begin{proof}
By a result of Peter Neumann \cite[Corollary 5.3]{PN}, the derived
subgroup of the unrestricted wreath product $ D_8\wp \mathbb{Z}$ is exactly
the base group, which by definition is isomorphic to $G$. \qed
\end{proof}

\begin{prop}\label{procar2}
No integral of $G$ is residually finite.
\end{prop}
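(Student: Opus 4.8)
The plan is to transport the determinant obstruction from the proof of Proposition~\ref{procar1} to the present infinite setting, using residual finiteness precisely to reduce to a finite situation where a determinant exists. Suppose for contradiction that $H$ is a residually finite integral of $G$, so $H'=G$. As in the proof of Proposition~\ref{procar1}, I would set
\[A=\Car_{n\in\mathbb{Z}}\langle y_n\rangle,\]
the product of the rotation subgroups of the factors $G_n\cong D_8$; then $A$ is a characteristic abelian subgroup of $G$, it is self-centralizing ($C_G(A)=A$), and $A^2=A[2]=Z(G)=G'$. Since $A$ is characteristic in $G\nor H$, it is normal in $H$; setting $K=C_H(A)$ gives $K\nor H$, $K\cap G=C_G(A)=A$, and an embedding $H/K\hookrightarrow\Aut(A)$.

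Next I would locate the ``reflections'' inside the derived group. For each $n$, conjugation by the element $x_n\in G=H'$ inverts the coordinate $\langle y_n\rangle$ of $A$ and centralizes all the others; call this automorphism $\iota_n\in\Aut(A)$. As $x_n\in G=H'$, its image $\iota_n=x_nK$ lies in $(H/K)'=H'K/K$, and hence $\iota_n\in\Aut(A)'$. If $A$ were finite and homocyclic of exponent $4$ this would already be contradictory, because then $\Aut(A)=\GL(m,\mathbb{Z}/4\mathbb{Z})$ and a single coordinate inversion $\iota_0=\mathrm{diag}(-1,1,\dots,1)$ has determinant $-1$, whereas $\Aut(A)'$ lies in the kernel of $\det\colon\GL(m,\mathbb{Z}/4\mathbb{Z})\to(\mathbb{Z}/4\mathbb{Z})^\ast=\{\pm1\}$. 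The difficulty is that $A=\Car_nC_4$ is infinite and carries no determinant; indeed the relevant infinite general linear group is perfect, so the bare fact that each $\iota_n$ is a commutator is not by itself contradictory.

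The key step is therefore to manufacture, from residual finiteness, a finite homocyclic quotient of $A$ on which the determinant argument survives. Concretely, I would produce an $H$-invariant subgroup $L\le A$ of finite index with $u_0=y_0^2\notin L$ and with $A/L$ homocyclic of exponent $4$, say $A/L\cong(\mathbb{Z}/4\mathbb{Z})^m$, arranged so that $A/L=\langle \bar y_0\rangle\oplus C$ with $\bar y_0$ of order $4$ and $\bar\iota_0$ acting as $-1$ on $\langle\bar y_0\rangle$ and trivially on $C$. Residual finiteness first supplies an $N\nor H$ of finite index with $u_0\notin N$, whence $L_0:=N\cap A$ is $H$-invariant, of finite index, and misses $u_0$; the delicate point is to refine $L_0$ to an $L$ for which $A/L$ has no $\mathbb{Z}/2\mathbb{Z}$-direct summand (i.e.\ $L$ is a pure submodule of the $\mathbb{Z}/4\mathbb{Z}[H]$-module $A$) while still excluding $u_0$ and splitting off $\langle y_0\rangle$. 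I expect this homocyclic reduction to be the main obstacle, and it is genuinely necessary: once an extra $\mathbb{Z}/2\mathbb{Z}$-summand is present the obstruction collapses, since inversion of a $C_4$-summand is in fact a commutator in $\Aut(C_4\times C_2)$. The natural tool is the $H$-equivariant squaring isomorphism $A/A[2]\cong A^2=G'$, which links the two elementary abelian ``layers'' of $A$ and should let one find an $H$-invariant complement (a module splitting) realizing a homocyclic quotient that still detects $u_0$.

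Granting the key step, the contradiction is immediate. As $L$ is $H$-invariant it is in particular invariant under conjugation by $x_0\in H$, so $\iota_0$ descends to $\bar\iota_0\in\Aut(A/L)=\GL(m,\mathbb{Z}/4\mathbb{Z})$; by construction $\bar\iota_0=(-1)\oplus\mathrm{id}_C$ on a free cyclic factor of order $4$, so $\det(\bar\iota_0)=-1$. On the other hand $\bar\iota_0$ is the image of $\iota_0\in(H/K)'$, hence $\bar\iota_0\in\GL(m,\mathbb{Z}/4\mathbb{Z})'$ and $\det(\bar\iota_0)=1$, a contradiction. Therefore no integral of $G$ can be residually finite. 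Everything apart from the homocyclic reduction is a direct transcription of the finite determinant argument of Proposition~\ref{procar1}.
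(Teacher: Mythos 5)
The setup (passing to $A=\Car_{n}\langle y_n\rangle$, $K=C_H(A)$, and observing that each coordinate inversion $\iota_n$ lies in $(H/K)'$) is fine, but your proof stands or falls on the ``key step'', and that step is a genuine gap rather than a technicality. Residual finiteness hands you an $H$-invariant finite-index subgroup $L_0=N\cap A$ with $u_0\notin L_0$, and nothing more. You then need to replace $L_0$ by an $H$-invariant finite-index $L$ such that (i) $A/L$ is homocyclic of exponent $4$, (ii) $u_0\notin L$, and (iii) $\bar\iota_0$ acts as $\mathrm{diag}(-1,1,\dots,1)$. None of these follows from what you have. For (i)--(ii): $A/L_0$ is just some finite abelian group of exponent dividing $4$ with an $H$-action, and its only $H$-submodules avoiding $\bar u_0$ may well fail to give a homocyclic quotient (already for $B=A/L_0\cong C_4\times C_2$ with a large group acting, the proper invariant subgroups are $0$, $2B$ and $B[2]$, and the only one avoiding $\bar u_0\in 2B$ is $0$, whose quotient is not homocyclic); the squaring isomorphism $A/A[2]\cong A^2$ does not manufacture the required invariant pure refinement. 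For (iii): even granting a homocyclic $A/L$ with $u_0\notin L$, that condition only tells you $\bar\iota_0$ is an involution of the form $I+2T$ with $\mathrm{im}(2T)$ of order $2$; such a map can perfectly well have determinant $+1$ (e.g.\ $\left(\begin{smallmatrix}1&2\\0&1\end{smallmatrix}\right)$ on $(\mathbb{Z}/4)^2$), so you additionally need $\langle\bar y_0\rangle$ to split off as a summand with an $\bar\iota_0$-fixed complement --- a further unjustified demand on $L$.

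The paper's proof extracts much more from residual finiteness before any determinant appears. Using the classification of the normal cyclic subgroups of order $4$ of $G$ (the set $\mathcal L$ of elements $y_n^{\pm1}\bar u$), one shows that conjugation by $H$ permutes the elements $u_n$; then, for $N\trianglelefteq H$ of finite index with $G_0\cap N=1$, a short commutator computation (if $y_0^h=y_j^{\pm1}\bar u$ with $j\ne0$ then $[y_0^{-1}y_0^h,x_0]=u_0$ would lie in $N$) forces $N\le C_H(u_0)$, so the $H$-orbit of $u_0$ is finite, say $\{u_i:i\in I\}$. This finiteness is what allows one to cut out a finite normal section $D_*/Z_*\cong(D_8)^{|I|}$ of $H$ sitting inside $G=H'$, to which Proposition~\ref{procar1} applies verbatim --- and there the abelian group carrying the determinant is $\langle y_i:i\in I\rangle$, homocyclic of exponent $4$ by construction, so the reduction you are struggling with never arises. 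If you want to keep your formulation, you would essentially have to prove this orbit-finiteness first; without it the homocyclic reduction is unsupported.
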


\begin{proof} Let
\[\mathcal L = \{ y_n^{\pm 1}\bar u\mid n\in \mathbb{Z},\, \bar u \in Z(G)\};\]
then easy considerations show that $\mathcal L$ is the set of all elements
$\bar g\in G$ such that $|\bar g|=4$ and $\langle \bar g\rangle \unlhd G$.

\smallskip

Suppose that the group $H$ is an integral of $G$. Then, for every
$n\in \mathbb{Z}$ and $h\in H$, $y_n^h\in \mathcal L$, that is,
$y_n^h = y_j^{\pm 1}\bar u$, for some $j\in \mathbb{Z}$ and $\bar u\in Z(G)$.
Consequently,
\[ u_n^h = (y_n^h)^2  = u_j.\]
This proves that, by conjugation, $H$ acts as a group of permutations on the
set $X =\{ u_n\mid n\in \mathbb{Z}\}$.

\smallskip

Assume now, for a contradiction, that $H$ is residually finite. Let $N$ be a
normal subgroup of finite index in $H$ such that $G_0\cap N = 1$, and
$M =  G\cap N$.

Then, for every $h\in N$, $y_0^{-1}y_0^h = [y_0,h]\in M$, that is,
$y_0^h = y_0\bar g$, with $\bar g \in M$. On the other hand, as observed before,
$y_0^h = y_j^{\pm 1}\bar u$, for some $j\in \mathbb{Z}$ and $\bar u\in Z(G)$.
Suppose that $j\ne 0$.
Then, since $M$ is normal in $G$, it contains
\[[\bar g, x_0] = (y_0^{-1}y_0^h)^{-1}(y_0^{-1}y_0^h)^{x_0}= (y_0^{-1}y_j\bar u)^{-1}(y_0^{-1}y_j\bar u)^{x_0}= y_0y_j^{-1}\bar u\cdot y_0y_j\bar u = u_0,\]
which is a contradiction. Thus, $y_0^h = y_0^{\pm 1}\bar u$, and
$u_0^h = (y_0^h)^2 = u_0$. Therefore, $N\le C_H(u_0)$; since $|H:N|$ is finite,
the $H$-orbit of $u_0$ by conjugation is finite.

Let $I$ be the finite subset of $\mathbb{Z}$ such that
$u_0^H = \{ u_i\mid i\in I\}$; then write $D = \Dir_{i\in I}G_i$ and
$Z_{\ast} = \Car_{j\in \mathbb{Z}\setminus I}\langle u_j\rangle$. 
As $Z_{\ast}$ is central in $G$, we have  $Z_{\ast}\unlhd G$. We claim that $D_{\ast} = DZ_{\ast} = DZ(G)$ is normal in
$H$.

For every $i\in I$ and $h\in H$ we have $y_i^h = y_t^{\pm 1}\bar u$, for some
$t\in \mathbb{Z}$ and $\bar u \in Z(G)$; as $u_i^h = (y_i^h)^2 = u_t$ belongs
to $u_i^H = u_0^H$, we have $t\in I$, whence $y_i^h\in DZ(G) = D_{\ast}$.
Consider now $x_i$ ($i\in I$) and $y_j$ with $j\in \mathbb{Z}\setminus I$;
let $h\in H$, then there exists $k\in \mathbb{Z}\setminus I$ such that
\[ [y_j, x_i^h]  = [y_j^{h^{-1}}, x_i]^h = [ y_k\bar u, x_i]^h = [ y_k, x_i]^h =1.\]
Therefore, if $Y = \Car_{j\in \mathbb{Z}\setminus I}\langle y_j\rangle$,
we have $x_i^h\in C_G(Y) = D\times Y$, and since $|x_i^h|=2$,
\[ x_i^h \in D\times Y^2 = D\times Z_{\ast} = D_{\ast}.\]
As $D = \langle y_i, x_i\mid i\in I\rangle$, we have proved that
$D^h\subseteq D_{\ast}$ for every $h\in H$, and consequently we have
$D_{\ast}\unlhd H$. Now,
\[ \frac{D_{\ast}}{Z_{\ast}} = \frac{DZ_{\ast}}{Z_{\ast}} \cong \frac{D}{D\cap Z_{\ast}} = D\]
is the direct sum of $|I|$ copies of the dihedral group $D_8$, and so, by Proposition \ref{procar1},
$D_{\ast}/Z_{\ast}$ is not contained in the derived group of
$H/Z_{\ast}$. Thus, $D_{\ast} \not\le H' = G$, which is the final
contradiction. \qed
\end{proof}

\subsection{Cartesian products and periodic integrals}
\label{sec:cartesian}

By \cite[Corollary 5.3]{PN}, we see that the unrestricted
wreath product of $S_3$ with $\mathbb{Z}$ is an integral of the Cartesian
product, thus we investigate periodic integrals of profinite groups in this section.

\begin{prop}\label{proper1} Let $G_n\cong S_3$ for every $n\in \Z$.
Then the group $G = \Car_{n\in \Z}G_n$ does not have periodic integrals.
\end{prop}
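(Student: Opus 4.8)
The plan is to mimic the determinant obstruction of Proposition~\ref{procar1}, but applied to the $3$-part of $G$, with periodicity supplying the finiteness that residual finiteness supplied in Proposition~\ref{procar2}. Write $G_n=\langle y_n,x_n\rangle\cong S_3$ with $y_n$ of order $3$, and set $B=\{g\in G\mid g^3=1\}=\Car_{n}\langle y_n\rangle$, a characteristic subgroup of $G$ equal to $G'$. First I would check that the subgroups $\langle y_n\rangle$ are precisely the minimal normal subgroups of $G$: any nontrivial $N\unlhd G$ meets $B$ (since $N\cap B=1$ would force $N\le C_G(B)=B$), and conjugating a nontrivial element of $N\cap B$ by a suitable $x_m$ and dividing shows $\langle y_m\rangle\le N$ for some $m$, whence $N=\langle y_m\rangle$ by minimality. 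Consequently every integral $H$ of $G$ permutes the family $\{\langle y_n\rangle\mid n\in\Z\}$, yielding a homomorphism $H\to\mathrm{Sym}(\Z)$, $h\mapsto\pi_h$. Its kernel contains $G=H'$, so the image $\bar W$ is abelian; and if $H$ is periodic then $\bar W$ is a torsion group.

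Now suppose, for contradiction, that $H$ is a periodic integral, so $H'=G$. The key point is that $x_{\omega_0}\in G=H'$ for a fixed index $\omega_0$, hence $x_{\omega_0}=\prod_{j=1}^r[p_j,q_j]$ for finitely many $p_j,q_j\in H$. Let $W_0=\langle\pi_{p_j},\pi_{q_j}\mid j\rangle\le\bar W$. Being finitely generated, abelian and torsion, $W_0$ is \emph{finite}; let $O\subseteq\Z$ be the (finite) $W_0$-orbit of $\omega_0$. Set $\hat H=\langle G,p_j,q_j\mid j\rangle$; since $G$ acts trivially on $\Z$, the group $\hat H$ acts on $\Z$ through $W_0$ and therefore stabilises $O$ setwise. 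Hence $\hat H$ normalises $B_O:=\prod_{n\in O}\langle y_n\rangle$, a finite-dimensional $\GF(3)$-vector space, and conjugation gives a representation $\rho\colon\hat H\to\GL(B_O)$.

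The contradiction then comes from the determinant. Composing with $\det\colon\GL(B_O)\to\GF(3)^\times=\{\pm1\}$ gives a homomorphism $\Delta=\det\circ\rho\colon\hat H\to\{\pm1\}$ into an abelian group, so $\Delta$ is trivial on $\hat H'$; in particular $\Delta(x_{\omega_0})=\prod_j\Delta([p_j,q_j])=1$. On the other hand $x_{\omega_0}$ inverts $y_{\omega_0}$ and centralises $y_n$ for $n\ne\omega_0$, so $\rho(x_{\omega_0})$ is the diagonal matrix with a single $-1$ in the coordinate $\omega_0\in O$; thus $\Delta(x_{\omega_0})=\det\rho(x_{\omega_0})=-1$. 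Since $-1\ne1$ in $\GF(3)$, this is absurd, and no periodic integral can exist.

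The step I expect to be the crux is guaranteeing a \emph{finite} orbit $O$ on which the determinant is defined. This is exactly where both hypotheses are used: $H'=G$ forces the permutation image $\bar W$ to be abelian, and periodicity forces it to be torsion, so that the finitely many permutations occurring in any single commutator expression generate a finite group. (By contrast, for the non-periodic integral $S_3\wp\Z$ noted at the start of this subsection the shift generates an infinite orbit, and no such determinant is available.) A secondary point worth isolating as a lemma is the identification of the $\langle y_n\rangle$ as the minimal normal subgroups, which is what legitimises the permutation action and hence the stabilisation of $B_O$. It is also essential to work with the $3$-part $B$ rather than with $G/B$: over $\GF(2)$ the determinant of a permutation is trivial, whereas over $\GF(3)$ a single sign-flip is detected by $-1\ne1$.
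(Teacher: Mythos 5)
Your proof is correct. Its first half coincides with the paper's argument: both identify the subgroups $\langle y_n\rangle$ as a characteristic family (the paper describes them as the normal subgroups of order $3$, you as the minimal normal subgroups --- these coincide here), deduce the permutation action of any integral $H$ on the index set with $G=H'$ in the kernel, and combine periodicity of the abelian quotient $H/G$ with the fact that $x_0$ is a product of finitely many commutators to extract a finite orbit. Where you genuinely diverge is the endgame. The paper builds the subgroup $D_{\ast}=D\times C$ with $D=\Dir_{i\in I}G_i$ and $C=\Car_{j\in\Z\setminus I}\langle y_j\rangle$, verifies that $D_{\ast}$ is normal in $N$ and contained in $N'$ (which requires the nontrivial intermediate step that each $x_i$, $i\in I$, lies in $N'$), and then invokes Proposition~\ref{procar1} applied to $D_{\ast}/C\cong (S_3)^{|I|}$ inside $N/C$. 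You instead inline the determinant obstruction underlying Proposition~\ref{procar1}: you restrict the conjugation action of $\hat H$ to the finite elementary abelian $3$-group $B_O$ spanned by the orbit, compose with $\det\colon\GL(B_O)\to\GF(3)^{\times}$, and play off $\Delta(x_{\omega_0})=1$ (forced by $x_{\omega_0}\in\hat H'$) against the explicit value $-1$. This buys a noticeably leaner argument --- you track only the single element $x_{\omega_0}$ and a single character, and you avoid the normality and containment verifications for $D_{\ast}$ entirely; in exchange, the paper's route reuses Proposition~\ref{procar1} as a black box and records the slightly stronger structural fact that a normal copy of $(S_3)^{|I|}$ sits inside $N/C$ without being contained in its derived group. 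Your closing remarks correctly locate the roles of the hypotheses (periodicity yielding the finite orbit, and the need to work over $\GF(3)$ rather than $\GF(2)$ for the sign to be visible).
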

\begin{proof}
For each $n\in \Z$, let $G_n= \langle y_n,x_n\mid y_n^3=x_n^2 =1,\, y_n^{x_n} = y^{-1}\rangle $. We  identify $G_n$ with the $n$-th coordinate subgroup of $G$, while for a generic element of $G$, we write $\bar g = (g_n)_{n\in \Z}$, with $g_n\in G_n$. Let $\mathcal S = \{ \langle y_n\rangle \mid n\in \Z\}$; then $\mathcal S$ is the set of all normal subgroups of $G$ of order $3$, thus, in particular, every automorphism of $G$ permutes the elements of $\mathcal S$. 

\smallskip
Let the group $H$ be an integral of $G$,
and assume, by contradiction, that $H$ is periodic. Now, since it is contained in  $G = H'$, $x_0$ is the product of a finite number of commutators in $H$, so there exists $G < N\le H$ with $x_0\in N'$ and $N/G$ finitely generated, and, because $H/G$ is periodic abelian, $N/G$ is finite. As $\langle y_0\rangle \in \mathcal S$, it follows that the $N$-conjugation orbit of $\langle y_0\rangle$ is finite. Let $I$ be the finite subset of $\Z$ such that $\{\langle y_i\rangle\mid i\in I\}$ is the $N$-orbit of $\langle y_0\rangle$, and set $D = \Dir_{i\in I}G_i$.

\smallskip
Let $C = \Car_{j\in \Z\setminus I}\langle y_j\rangle$ so that
$C\times D'= \Car_{n\in \Z}\langle y_n\rangle = G'=H''$.
We show that $D_{\ast} := DC = D\times C$ is normal in $N$. We have just observed that $CD'$ is normal in $H$; thus, consider $x_i$ with $i\in I$, and $g\in N$. Now, for every $j\in \Z\setminus I$, $\langle y_j^{g^{-1}}\rangle = \langle y_k\rangle$ for some $k\in \Z\setminus I$, whence
\[y_j^{x_i^g} = (y_j^{g^{-1}})^{x_ig} = (y_j^{g^{-1}})^{g}=  y_j.\]
Therefore, $x_i^g \in C_G(C) = D\times C = D_{\ast}$.  Moreover, if $h\in N$ is such that $\langle y_i\rangle = \langle y_0\rangle^h$, that is $y_0^h = y_i^{\epsilon}$ with $\epsilon\in \{1,-1\}$, then 
\[ (y_i^{\epsilon})^{x_i} = y_i^{-\epsilon} =  (y_0^{-1})^h = (y_0^h)^{x_0^h} =  (y_i^{\epsilon})^{x_0^h},\]
showing that $x_0^h \in x_iC_{D_{\ast}}(y_i)$; similarly, $x_0^h\in \bigcap_{i\ne k\in I}C_{D_{\ast}}(y_k) = G_iG'$, 
and therefore $ x_0^h \in x_iG'$, yielding in particular $x_i \in N'$,
since $G' \le N'$ and $x_0^h \in N'$. Hence $x_i^g \in N' \cap D_{\ast}$ for every $g \in N$.

Since $D_{\ast}$ is generated by $G' \le N'$ and $\{x_i\}_{i \in I} \subseteq N'$ and that all of their $N$-conjugates still live in $D_{\ast}$, then $D_{\ast} \le N'$ is normal in $N$. 
Finally,
\[ \frac{D_{\ast}}{C} = \frac{DC}{C} \cong  \frac{D}{D\cap C} = D\]
is the direct sum of $|I|$ copies of the dihedral group $S_3$, and so by Proposition~\ref{procar1}
$D_{\ast}/C$ is not contained in the derived group of $N/C$, a contradiction to the fact that $D_{\ast} \le N'$. \qed
\end{proof}


\section{Questions}
\label{sec:questions}

We begin with a solution to Question 10.1 in our previous paper~\cite{accm}
by Efthymios Sofos. We are grateful to him for permission to publish it here.

\begin{theorem}[Sofos]
The number of integers $n$ with
$1<n<x$ for which every group of order $n$ is integrable is asymptotically
\[\mathrm{e}^{-\gamma} \frac{x }{ \log \log \log x },\]
where $\gamma$ is the Euler--Mascheroni constant.
\end{theorem}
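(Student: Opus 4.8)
The plan is to convert the statement into a purely arithmetic count and then apply a classical theorem of Erd\H{o}s. Write $S=\{n: \text{every group of order }n\text{ is integrable}\}$ and $T=\{n:\gcd(n,\phi(n))=1\}$, and recall that $T$ is exactly the set of \emph{cyclic orders}, those $n$ for which every group of order $n$ is cyclic. First I would establish $T\subseteq S$: a cyclic group is abelian, and every abelian group is integrable by Theorem~\ref{t:nilpotent-integral}, so every group whose order lies in $T$ is integrable. The heart of the reduction is the reverse inclusion up to a sparse set, and there the obstruction in Theorem~\ref{t:nec} is the main tool.

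Next I would analyse the squarefree $n\notin T$. For such $n$ there are primes $p,q\mid n$ with $p\mid q-1$, so the Frobenius group $F=C_q\rtimes C_p$ exists; setting $G=F\times C_{n/pq}$ (the factor $C_{n/pq}$ being coprime to $pq$ since $n$ is squarefree) one has $Z(G)=C_{n/pq}$, $\Inn(G)\cong F$, and, because the characteristic Sylow factors have coprime orders, $\Aut(G)=\Aut(F)\times\Aut(C_{n/pq})$, whence $\Aut(G)'=\Aut(F)'\times 1$. Since $\Aut(F)=\mathrm{AGL}(1,q)$ has derived group $C_q$ of order $q<pq=|F|$, we get $\Inn(G)\not\le\Aut(G)'$, so $G$ is not integrable by Theorem~\ref{t:nec}; hence $n\notin S$. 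This proves $S\cap\{\text{squarefree}\}=T$, so that $S\setminus T$ consists only of non-squarefree integers.

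It then remains to bound the non-squarefree part of $S$. Here I would invoke the structure theory, together with obstructions such as Proposition~\ref{procar1} for dihedral factors, to confine each non-squarefree $n\in S$ to an integer divisible by some $p^2$ whose remaining prime factors obey the same avoidance conditions as for cyclic orders (in particular they avoid the residue class $1\pmod p$, as otherwise a non-integrable Frobenius-type section appears). Counting such $n$ by a Selberg--Delange argument gives, for each fixed $p$, a saving of order $(\log x)^{-1/(p-1)}$ over $x/(p^2\log\log\log x)$; since $\sum_p p^{-2}(\log x)^{-1/(p-1)}\to 0$ by dominated convergence, one obtains $|(S\setminus T)\cap[2,x]|=o\!\left(x/\log\log\log x\right)$, and consequently $|S\cap[2,x]|\sim|T\cap[2,x]|$.

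Finally I would apply Erd\H{o}s's theorem that $\#\{n\le x:\gcd(n,\phi(n))=1\}\sim\mathrm{e}^{-\gamma}x/\log\log\log x$, which is the genuinely hard analytic input. Its proof rests on the fact that a cyclic number is, up to lower-order terms, one with no small prime factor $p$ accompanied by a prime factor $q\equiv 1\pmod p$; sieving out the primes up to $y=\log\log x$ and applying Mertens' theorem, $\prod_{p\le y}(1-1/p)\sim\mathrm{e}^{-\gamma}/\log y=\mathrm{e}^{-\gamma}/\log\log\log x$, produces both the constant $\mathrm{e}^{-\gamma}$ and the triple logarithm. I expect the main obstacle to be exactly this uniform multi-scale sieve: controlling, via Brun--Titchmarsh estimates for primes in the progressions $1\pmod p$, the integers killed by small prime factors at the critical scale $y=\log\log x$, and simultaneously confirming that the non-squarefree correction of the previous step is truly of lower order than $x/\log\log\log x$.
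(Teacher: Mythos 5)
Your strategy --- compare the set $S$ of ``integrable orders'' with the set $T$ of cyclic orders, show the symmetric difference is negligible, and then quote Erd\H{o}s's theorem as a black box --- is genuinely different from the paper's. The paper instead relies on the arithmetic characterization of integrable orders from \cite[Theorem~7.1]{accm} (cube-free together with the same prime-avoidance condition~(ii) as for cyclic orders) and re-runs the Montgomery--Vaughan proof of Erd\H{o}s's theorem (Theorem~11.23 of \cite{MV}) with ``squarefree'' replaced by ``cube-free''; the only analytic change is bounding $x\sum_{p>y}p^{-3}$ instead of $x\sum_{p>y}p^{-2}$. Your Step~2 (a squarefree $n\notin T$ is not in $S$, via $G=(C_q\rtimes C_p)\times C_{n/pq}$ and Theorem~\ref{t:nec}) is correct and is a nice self-contained argument.

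The genuine gap is in your Step~3, in two places. First, the group theory: you assert that a non-squarefree $n\in S$ divisible by $p^2$ still obeys the avoidance condition (no prime factor $q\equiv 1\pmod p$), but the mechanism of Step~2 does not extend: once $p^2\mid n$ the complement $C_{n/pq}$ is no longer coprime to $pq$, so $\Aut(G)$ no longer splits as $\Aut(F)\times\Aut(C_{n/pq})$ and your computation of $\Aut(G)'$ breaks down; Proposition~\ref{procar1}, which concerns powers of dihedral groups inside derived subgroups, is not the relevant tool. What is actually needed here is the necessity half of the characterization in \cite[Theorem~7.1]{accm} --- including the fact that $S$ consists of cube-free integers, which already requires a non-integrable group of order $p^3$ for every prime $p$ --- and your proposal neither proves nor cites it. Second, the analytic estimate: ``$\sum_p p^{-2}(\log x)^{-1/(p-1)}\to 0$ by dominated convergence'' yields only $|(S\setminus T)\cap[2,x]|=o(x)$, which is useless against a main term of size $x/\log\log\log x$; one must split the sum over $p$ at a threshold such as $P=(\log\log\log x)^2$, using $\sum_{p>P}p^{-2}\ll 1/P$ for the tail and $(\log x)^{-1/P}=o\bigl((\log\log\log x)^{-1}\bigr)$ for the small primes. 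You flag this second point yourself and it is repairable; the first point is the substantive missing ingredient.
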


\begin{proof}\textbf{(outline)}
This follows by a modification of the proof of the result of
Erd\H{o}s~\cite{erdos} for the number of integers $n$ for which every group
of order $n$ is cyclic. This
can be found as Theorem 11.23 in the book \cite{MV} (which we follow closely).
One has to replace property (i) by ``$n$ is cube-free''
and leave property (ii) as is. Then define $A_p(x)$ as the number of
``integrable'' $n\le x$ such that the least prime divisor of $n$ is $p$.
It is shown in pages 387 and 388 that 
\[\sum_{p\le\log\log x} A_p(x)=O(x (\log\log\log x)^{-2}),\]
but in fact only  property (ii) is used for this. So the same proof 
holds for our case as well.

The rest of the proof needs only small modification: replace
\begin{quote}
If $n$ does not satisfy (i), there is a prime $p$ with $p^2\mid n$. 
The number of such $n\le x$ is at most $\lfloor x/p^2\rfloor\le x/p^2$. 
Hence the total number of $n$ in $\Phi(x,y)$ for which (i) fails is not more
than $x\sum_{p>y}p^{-2} \ll x/(y\log y)$.
\end{quote}
by
\begin{quote}
If $n$ does not satisfy (i),  there is a prime $p$ with $p^3\mid n$. The
number of such $n\le x$ is at most $\lfloor x/p^3\rfloor\le x/p^3$. 
Hence the total number of $n$ in $\Phi(x,y)$ for which (i) fails is not more
than $x\sum_{p>y}p^{-3}\le\sum_{p>y}p^{-2}  \ll x/ (y\log y)$. \qed
\end{quote}
\end{proof}

Now we turn to some further open questions arising from this paper.

\begin{question} (Section~\ref{s:bounding})
Find a bound, or a procedure for calculating one, for the order of
the integral of an integrable finite group of order~$n$.
\end{question}

\begin{question} (Section~\ref{sec:necessary})
Find classes of groups $G$ for which the condition $\Inn(G)\le\Aut(G)'$ is
sufficient for integrability. We note that this is true in two extreme cases,
abelian groups and perfect groups.
\end{question}

\begin{question} (Section~\ref{sec:p-integrals})
Theorem~\ref{t:not-p-int}(b) shows that a non-abelian $p$-group whose derived
group has index~$p^2$ is not $p$-integrable. Is there a non-abelian
$p$-integrable $p$-group whose Frattini subgroup has index $p^2$ (that is, one
which is $2$-generated)?
\end{question}

\begin{question} (Section~\ref{sec:integrabelian})
Is the following true? The finite or countable abelian $2$-group $G$ is
finitely integrable if and only if it has subgroups $A,B,F$ such that
\[G\cong A\times A\times A\times B\times B\times F,\]
where $F$ is the direct product of a divisible group and a finite group.

In particular, is it true that a direct sum of finite cyclic groups is
finitely integrable if and only if the set of natural numbers $n$ for which
the cyclic group $C_{2^n}$ has multiplicity $1$ in the product is finite?
\end{question}

\begin{question} (Section~\ref{sec:integrabelian})
Is it true that integrability of a reduced $p$-group of arbitrary cardinality
is determined by its Ulm--Kaplansky invariants?
\end{question}

\begin{question} (Section~\ref{sec:varieties})
Let $\mathbf{V}$ be a finitely based variety of groups. We know that the class
of all integrals of groups in $\mathbf{V}$ is a variety. Is it finitely based?
\end{question}

\begin{question} (Section~\ref{sec:varieties})
\begin{enumerate}
\item Is it true that all groups of exponent $p$ and class at most $p-1$ are
integrable?
\item is it true that, if $p$ and $q$ are primes with $p\nmid q-1$, then
every group in the variety $\mathbf{A}_q\mathbf{A}_p$ is integrable?
\end{enumerate}
\end{question}

\begin{question} (Section~\ref{sec:profinite}) 
Let $G$ be a finitely generated profinite group $G$ and
let $G(n)$ be as in Remark \ref{rem:nikolov-segal}.
Does $G$ have a (profinite) integral if and only if $G/G(n)$ has an integral 
for every $n\ge 1$? (Theorem~\ref{t:cofz1} shows that this is true if $Z(G)=1$.)
\end{question}

\begin{question} (Section~\ref{sec:profinite})
\begin{enumerate}
\item Does there exist a countable integrable locally finite group which does not have a periodic integral?
\item Is it true that every integrable finitely generated periodic group has a periodic (finitely generated) integral?
\item Which infinite periodic groups have a (periodic) integral? For example,
what about Grigorchuk's first group?
\end{enumerate}
\end{question}


\section*{Acknowledgments}

The authors would like to thank the referee for a careful reading which led to several
improvements of the exposition and proofs.

The first author was funded by national funds through the FCT - Funda\c{c}\~ao para a Ci\^encia e a Tecnologia, I.P., under the scope of the projects UIDB/00297/2020, UIDP/00297/2020 (Center for Mathematics and Applications) and 
\\ PTDC/MAT/PUR/31174/2017.

The first, second and fourth authors gratefully acknowledge the support of the 
Funda\c{c}\~ao para a Ci\^encia e a Tecnologia (CEMAT-Ci\^encias FCT projects UIDB/04621/2020 and UIDP/04621/2020).

The fourth and the fifth authors are members of the Gruppo Nazionale per le Strutture Algebriche, Geometriche e le loro Applicazioni (GNSAGA) of the Istituto Nazionale di Alta Matematica (INdAM), and the fourth author gratefully acknowledges the support of the Universit\`a degli Studi di Milano--Bicocca
(FA project ATE-2016-0045 ``Strutture Algebriche'').


\end{document}